\documentclass[a4paper,11pt]{article}
\usepackage[T1]{fontenc}
\usepackage{lmodern,amsmath,amsthm,amsfonts,amssymb,graphicx,float,microtype,thmtools,underscore,mathtools,xurl,thm-restate}
\usepackage[dvipsnames,svgnames,table]{xcolor}
\usepackage[shortlabels]{enumitem}
\setlist[itemize]{topsep=0ex,itemsep=0ex,parsep=0ex}
\setlist[enumerate]{topsep=0ex,itemsep=0ex,parsep=0ex}
\usepackage{pifont}

\usepackage[unicode=true]{hyperref}
\hypersetup{
colorlinks,
breaklinks=true,
linkcolor={blue!60!black},
citecolor={black},
urlcolor={blue!60!black},
pdftitle={Countable Graphs with Finite Path-width: Characterisation and Universality}}
\usepackage[capitalise, compress, nameinlink, noabbrev]{cleveref}
\crefname{lem}{Lemma}{Lemmas}
\crefname{thm}{Theorem}{Theorems}
\crefname{cor}{Corollary}{Corollaries}
\crefname{claim}{Claim}{Claims}
\newcommand{\defn}[1]{\textcolor{Maroon}{\emph{#1}}}
\newcommand{\mathdefn}[1]{\textcolor{Maroon}{#1}}
\usepackage[longnamesfirst,numbers,sort&compress]{natbib}
\makeatletter
\def\NAT@spacechar{~}
\makeatother
\usepackage[tmargin=30mm,bmargin=30mm,lmargin=30mm,rmargin=30mm]{geometry}
\renewcommand{\baselinestretch}{1.1}
\allowdisplaybreaks
\DeclareMathOperator{\pw}{pw}
\DeclareMathOperator{\lw}{lw}
\DeclareMathOperator{\wow}{wow}
\DeclareMathOperator{\dist}{dist}

\DeclarePairedDelimiter{\set}{\lbrace}{\rbrace} 
\renewcommand{\epsilon}{\varepsilon}
\renewcommand{\emptyset}{\varnothing}

\renewcommand{\geq}{\geqslant}
\renewcommand{\leq}{\leqslant}

\newcommand{\GG}{\mathcal{G}}

\newcommand{\NN}{\mathbb{N}}

\newcommand{\C}{\mathcal{C}}
\renewcommand{\thefootnote}{\fnsymbol{footnote}}
\theoremstyle{plain}
\newtheorem{thm}{Theorem}
\newtheorem{lem}[thm]{Lemma}
\newtheorem{cor}[thm]{Corollary}

\newtheorem{prop}[thm]{Proposition}
\newtheorem{claim}{Claim}
\crefname{obs}{Observation}{Observations}
\newtheorem*{lem*}{Lemma}
\theoremstyle{definition}

\newtheorem*{conj*}{Conjecture}

\begin{document}
\title{\bf\boldmath\fontsize{16pt}{18pt}\selectfont Countable Graphs with Finite Path-width: Characterisation and Universality}

\author{%
\qquad Tony Huynh\,\footnotemark[1]
\qquad Freddie Illingworth\,\footnotemark[2]
\qquad Nikolai Karol\,\footnotemark[3]
\\
Florian Lehner\,\footnotemark[4]
\qquad Chun-Hung Liu\,\footnotemark[5]
\qquad J\'anos Pach\,\footnotemark[6]
\qquad David~R.~Wood\,\footnotemark[3]
}

\maketitle

\begin{abstract}
We study path-width and the closely related parameter line-width in countably infinite graphs. Our first result characterises the graphs of finite path-width: they are the graphs that do not have infinitely many vertices of infinite degree, do not have infinitely many pairwise disjoint infinite paths, and contain no subdivision of some finite tree of maximum degree 3. We then investigate universality under the subgraph relation for graphs of bounded path-width or line-width. In particular, we prove that there exists a universal graph with line-width $\mathcal{O}(k^2)$ for the class of graphs with line-width at most $k$. In contrast, we show that no graph of finite path-width is universal for the class of locally finite graphs with path-width $1$. Finally, we show that for each $k\geq 2$, every universal graph for the class of graphs with path-width at most $k$ has line-width at least $k + 1$.
\end{abstract}

\footnotetext[1]{Discrete Mathematics Group, Institute for Basic Science (IBS), Daejeon, South Korea (\href{mailto:tony@ibs.re.kr}{tony@ibs.re.kr}).  Supported by the Institute for Basic Science (IBS-R029-C1).}
\footnotetext[2]{Department of Mathematics, University College London, UK (\textsf{\href{mailto:f.illingworth@ucl.ac.uk}{f.illingworth@ucl.ac.uk}}).}
\footnotetext[3]{School of Mathematics, Monash University, Melbourne, Australia (\textsf{\{\href{mailto:nikolai.karol@monash.edu}{nikolai.karol},\href{mailto:david.wood@monash.edu}{david.wood}\}\allowbreak@monash\allowbreak.edu}). Research of Wood is supported by the Australian Research Council and by NSERC.}
\footnotetext[4]{Department of Mathematics, University of Auckland, New Zealand (\textsf{\href{mailto:florian.lehner@auckland.ac.nz}{florian.lehner@auckland.ac.nz}}).}
\footnotetext[5]{Department of Mathematics, Texas A\&M University, USA (\href{mailto:chliu@tamu.edu}{chliu@tamu.edu}). Partially supported by NSF under CAREER award DMS-2144042.}
\footnotetext[6]{Alfr\'ed R\'enyi Institute of Mathematics, Budapest, Hungary (\href{mailto:pach@renyi.hu}{pach@renyi.hu}). Supported by the ERC Advanced Grant `GeoScape.'}


\renewcommand{\thefootnote}{\arabic{footnote}}
\section{Introduction}
\label{sec:intro}

Path-width measures how similar a given graph is to a path and is of fundamental importance in structural and algorithmic graph theory; see \citep{Bodlaender98} for a survey. This paper studies the path-width of infinite graphs\footnote{In this paper, all graphs are simple and countable; that is, the vertex set is finite or countably infinite. A graph $U$ \defn{contains} a graph $G$ if a subgraph of $U$ is isomorphic to $G$. A \defn{class} is a collection of graphs, closed under isomorphism. A class $\GG$ is \defn{monotone} if for every graph $G\in\GG$ every subgraph of $G$  is in $\GG$.}; in particular, we (1) characterise the infinite graphs with finite path-width, and (2) study universality for the class of graphs with given path-width and other closely related parameters. 

A \defn{path-decomposition} of a graph $G$ is a sequence $(B_t : t \in \mathbb{N})$ of subsets of $V(G)$ (called \defn{bags}) such that: 
\begin{itemize}
    \item $\bigcup_{t \in \mathbb{N}}B_{t} = V(G)$,
    \item for each $vw \in E(G)$, there exists $t \in \mathbb{N}$ such that $v, w \in B_{t}$, and
    \item $B_{t_{1}} \cap B_{t_{3}} \subseteq B_{t_{2}}$ for  all $t_{1}, t_{2}, t_{3} \in \mathbb{N}$ with $t_{1} \leqslant t_{2} \leqslant t_{3}$; equivalently, for every $v \in V(G)$, the bags containing $v$ form a non-empty consecutive subsequence of $(B_1, B_{2}, \dots)$.
\end{itemize}

The \defn{width} of a path-decomposition $(B_t : t \in \mathbb{N})$ is $\sup_{t \in \mathbb{N}}(|B_t|-1)$.  The \defn{path-width} of a graph $G$, denoted by \defn{$\pw(G)$}, is the minimum width of a path-decomposition of $G$. So $\pw(G) = \infty$ if and only if every path-decomposition of $G$ has bags of arbitrarily large (but not necessarily infinite) size. Our definition of path-decomposition is a natural extension of the usual definition of path-decomposition for finite graphs and is standard\footnote{Some papers in the literature, such as~\citep{NSS25,NSS-AS-II,NSS-AS-III}, use $\mathbb{Z}$ instead of $\mathbb{N}$ as the indexing set of a path-decomposition.
Using $\mathbb{Z}$ instead of $\mathbb{N}$ as the indexing set does not increase the path-width and decreases it by at most a multiplicative factor of $2$. All our results in this paper remain true even if the path-decomposition is defined by using $\mathbb{Z}$ as the indexing set. Our motivation for using $\mathbb{N}$ as the indexing set is that it gives the chain of inequalities \textcolor{blue!60!black}{(}\ref{observation}\textcolor{blue!60!black}{)}, given in \cref{sec:compare}.}. 

\subsection{Characterisation}

Here we consider which graphs have finite path-width. Compactness arguments for path-width are not straightforward, since the following two graphs have infinite path-width, but every finite subgraph has path-width 1:
\begin{itemize}
\item the disjoint union of infinitely many 1-way infinite paths, and
\item the disjoint union of infinitely many infinite stars.
\end{itemize}
See \cref{DisjointPaths,InfiniteDegreeVertices} for proofs of these results. These examples show that the following conditions are necessary for a graph $G$ to have finite path-width, where the third condition is immediate from the definition of infinite path-width:
\begin{itemize}
    \item $G$ has finitely many vertices of infinite degree, 
    \item $G$ contains finitely many pairwise disjoint 1-way infinite paths\footnote{Up to isomorphism, the \defn{1-way infinite path} is the graph with vertex set $\mathbb{N}$ and edge set $\{\{n, n + 1\} : n \in \mathbb{N}\}$, and the \defn{2-way infinite path} is the graph with vertex set $\mathbb{Z}$ and edge set $\{\{a, a + 1\} : a \in \mathbb{Z}\}$.}, 
    \item the finite subgraphs of $G$ have path-width bounded by an absolute constant. 
\end{itemize}

Our first main result shows that these three necessary conditions are also sufficient.

\begin{thm}
\label{FinitePathwidthQuantitative}
Let $G$ be a graph such that for some integers $d,c\geq 0$ and $k\geq 1$\textnormal{:} 
    \begin{itemize}
        \item at most~$d$ vertices in $G$ have infinite degree,
        \item $G$ does not contain $k$ pairwise disjoint 1-way infinite paths, and
        \item every finite subgraph of~$G$ has path-width at most~$c$.
    \end{itemize}
Then $\pw(G) \leq 2c + (3k-2)(2c + 1) + d$.
\end{thm}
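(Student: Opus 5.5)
The plan is to reduce to a locally finite graph, cut it into finite pieces along small separators, and concatenate path-decompositions of the pieces. Let $D$ be the set of at most $d$ vertices of infinite degree. Put $D$ into every bag; this costs exactly the additive $d$ in the bound. It then suffices to show that the locally finite graph $G' := G - D$ has path-width at most $2c + (3k-2)(2c+1)$. The graph $G'$ still contains no $k$ pairwise disjoint 1-way infinite paths, and all its finite subgraphs have path-width at most $c$.

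\textbf{Handling finite components.} All but finitely many components of $G'$ are finite; in fact there are at most $k-1$ infinite components, since each contains a ray. Enumerate the finite components and take a width-$c$ path-decomposition of each. Interleave these decompositions into the global sequence, one finite component after each piece of the infinite part (defined below). Since no vertex is shared, width is not affected, and every finite component is eventually covered. So it remains to handle a single infinite, connected, locally finite $H$, and then concatenate the at most $k-1$ such parts piece by piece in an interleaved way. Only one decomposition can be the ``last'', so I would run them in parallel, one piece from each infinite component in turn; this multiplies the separator term by at most the number of components. This is a likely source of the factor involving $3k-2$ rather than $k-1$.

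\textbf{Key lemma (Menger for rays).} In a connected locally finite $H$ without $k$ disjoint rays, every finite $X\subseteq V(H)$ can be separated from all rays by a finite set $S$ with $|S|\le k-1$. That is, every component of $H-S$ that meets $X\setminus S$ is finite, or alternatively the component of $H - S$ containing the rest of $H$ is disjoint from $X$. I would prove this by applying the finite Menger theorem between $X$ and the sphere $\{v:\dist(v,X)=n\}$ for each $n$. If for every $n$ there are $k$ disjoint $X$--sphere paths, then a K\H{o}nig-type compactness argument, which is valid because $H$ is locally finite so balls are finite, yields $k$ disjoint rays, a contradiction. Hence for some $n$ there is a separator of size less than $k$.

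\textbf{Building the decomposition.} Using the lemma, I build finite sets $X_1\subseteq X_2\subseteq\cdots$ exhausting $V(H)$, with separators $S_i$ of size at most $k-1$. Each $X_{i+1}$ should contain $X_i$, $S_i$ and the $i$-th vertex of a fixed enumeration, and be closed under adding the finite components cut off by $S_i$. Set $P_i := X_{i+1}\setminus X_i$. Each $P_i$ is finite, and every edge of $H$ lies inside some $P_i\cup S_i\cup S_{i+1}$. Take a width-$c$ path-decomposition of $H[P_i]$ and add $S_i\cup S_{i+1}$ to each of its bags; concatenating these over $i$ gives a path-decomposition. To check the consecutiveness axiom, note that a vertex of $S_i$ appears only in pieces $i-1$ and $i$, which are adjacent. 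The resulting width is $c+2(k-1)$, comfortably within the claimed bound.

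\textbf{Main obstacle.} I expect the hardest step to be making the separators behave in a nested, disjoint way. The difficulty is that vertices of $S_i$ could reappear far later, or that the parts ``beyond'' $S_i$ might not form a single infinite region. If the clean version fails, I would fall back to a weaker lemma. It would give separators of size $3k-2$ that are pairwise far apart (e.g.\ at distance more than $1$, obtained by taking $S_i$ together with a neighbourhood-type correction). Each separator vertex would be charged a copy of a width-$c$ bag, i.e.\ $2c+1$ vertices, when re-routing the decompositions of neighbouring pieces. This matches the shape $2c+(3k-2)(2c+1)$ of the stated bound.
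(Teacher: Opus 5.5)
Your strategy is viable and quite different from the paper's. It deletes the at most $d$ vertices of infinite degree, cuts the locally finite remainder into finite pieces along separators of size less than $k$, and concatenates width-$c$ decompositions of the pieces. However, the step \emph{Building the decomposition} does not produce a path-decomposition as written, and the fallback is not an argument. Three things go wrong.
\begin{enumerate}
\item Your separation lemma allows $S$ to meet or even equal $X$ (if $|X|\leq k-1$, then $S=X$ qualifies), so nothing forces the separators outward or makes them disjoint.
\item Adding $v_i$ to $X_{i+1}$ \emph{after} $S_i$ has been chosen destroys control of the boundary of $X_{i+1}$.
\item The indexing is off by one: since $S_i\subseteq X_{i+1}$, it is $S_i$ that sits at the interface of $P_i$ and $P_{i+1}$, yet the bags of piece $i+1$ receive $S_{i+1}\cup S_{i+2}$.
\end{enumerate}
Already for a ray $r_1r_2r_3\dots$ (so $k=2$), take the natural choices $X_1=\{r_1\}$, $S_1=\{r_3\}$, $X_2=\{r_1,r_2,r_3\}$, $S_2=\{r_6\}$, $X_3=\{r_1,\dots,r_6\}$, $S_3=\{r_9\}$. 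Then the edge $r_3r_4$ lies in no set $P_i\cup S_i\cup S_{i+1}$, so no bag contains it.

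Reverse-engineering $3k-2$ and $2c+1$ from separator sizes is also misguided. The stated bound is $w+(3k-2)(w+1)+d$ with $w=2c$, which comes from $\wow(G)\leq 2\lw(G)\leq 2c$ (\cref{FiniteLinewidth,lwandwow}). The term $(3k-2)(w+1)=3(k-1)(w+1)+(w+1)$ is what \cref{lem:NoFiniteComp,thm:WodToPd} pay to turn a well-order-decomposition into a path-decomposition: three bags' worth for each of at most $k-1$ classes of ray tails, plus one for the finite components. Also, multiplying a separator size of $k-1$ by the number of infinite components, as you suggest, gives a term of order $2(k-1)^2$. That exceeds the stated bound when $c$ is small and $k$ is large.

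The missing idea is a separation lemma that avoids a prescribed finite set. Let $H$ be the union of the (at most $k-1$) infinite components of $G-D$. For every finite $Y\subseteq V(H)$ there is $S$ with $S\cap Y=\emptyset$ and $|S|\leq k-1$ such that every component of $H-S$ meeting $Y$ is finite. To get this, run your compactness argument on paths in $H-Y$ from the first sphere around $Y$ to the $n$-th sphere.

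Now construct the pieces as follows.
\begin{itemize}
\item Set $F_0:=\emptyset$ and enumerate $V(H)=\{v_0,v_1,\dots\}$.
\item Choose $S_{i+1}$ for $Y:=F_i\cup\{v_i\}$.
\item Let $F_{i+1}$ consist of $S_{i+1}$ and all finite components of $H-S_{i+1}$. There are finitely many, since each is adjacent to $S_{i+1}$; having several infinite components of $H-S_{i+1}$ is harmless.
\item Put $Q_i:=F_{i+1}\setminus F_i$.
\end{itemize}
Then $S_{i+1}\subseteq Q_i$, so the separators are pairwise disjoint. Consider an edge joining $Q_i$ to a later piece. Its $Q_i$-end lies in $S_{i+1}$, because the other end lies in an infinite component of $H-S_{i+1}$. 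Its other end lies in $Q_{i+1}$, since $S_{i+1}\subseteq F_{i+1}\subseteq F_{i+2}\setminus S_{i+2}$.

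Hence you can concatenate the bags $W\cup S_i\cup S_{i+1}$, where $W$ ranges over a width-$c$ path-decomposition of $H[Q_i]$ and $S_0:=\emptyset$. This is a path-decomposition of $H$ of width at most $c+2(k-1)$. The finite components of $G-D$ can be inserted between pieces $i$ and $i+1$ with $S_{i+1}$ added to their bags. Contrary to your remark this is not free, but it keeps the width at most $c+k-1$. Adding $D$ everywhere gives $\pw(G)\leq c+2(k-1)+d$.

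Repaired this way, your route is a genuine and more elementary alternative to the paper's. It uses only the hypothesis on finite subgraphs plus an end version of Menger's theorem. It bypasses \cref{FiniteLinewidth}, \cref{lwandwow} and the analysis of well-order-decompositions, and gives a bound linear in $c+k$ rather than of order $kc$. What the paper's route buys in exchange is the general conversion \cref{thm:WodToPd} from well-order-width to path-width.
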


\cref{FinitePathwidthQuantitative}, which is proved in \cref{sec:finite_pw}, implies the following excluded-subgraph characterisation of graphs with finite path-width. Here we also use the above necessary conditions, and the fact that a monotone class $\GG$ of finite graphs has  bounded path-width if and only if for some finite tree $T$ with maximum degree 3, no subdivision of $T$ is in $\GG$ (see \citep{RS-I,BRST91,Diestel95}). 

\begin{cor}
\label{FinitePathwidth}
A graph $G$ has finite path-width if and only if:
    \begin{itemize}
        \item $G$ has finitely many vertices of infinite degree,
        \item $G$ contains finitely many pairwise disjoint 1-way infinite paths, and
        \item there exists a finite tree $T$ of maximum degree 3 such that $G$ contains no subdivision of $T$.
    \end{itemize}
\end{cor}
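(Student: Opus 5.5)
The plan is to derive \cref{FinitePathwidth} from \cref{FinitePathwidthQuantitative} together with the finite excluded-tree theorem for path-width, treating the two directions separately.

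\emph{Forward direction.} Suppose $\pw(G)=w<\infty$.
\begin{itemize}
\item For the first two conditions, we invoke the necessity results referred to in the introduction. These are \cref{InfiniteDegreeVertices}, which says that infinitely many vertices of infinite degree force infinite path-width, and \cref{DisjointPaths}, the analogous statement for infinitely many pairwise disjoint 1-way infinite paths. We apply them in the monotone form: a subgraph of $G$ has path-width at most $w$, because restricting a path-decomposition to a subgraph yields a path-decomposition of no larger width. So if $G$ contained infinitely many vertices of infinite degree, it would contain a disjoint union of infinitely many infinite stars. These stars can be found greedily: each centre has infinite degree, so after finitely many stars have been chosen we can still pick infinitely many fresh neighbours for the next centre. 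That union has infinite path-width, a contradiction. The same argument applies to infinitely many disjoint rays.
\item For the third condition, every finite subgraph of $G$ has path-width at most $w$. The class of finite graphs of path-width at most $w$ is monotone and has bounded path-width. By the cited theorem \citep{RS-I,BRST91,Diestel95}, there is therefore a finite tree $T$ of maximum degree 3 with no subdivision of $T$ of path-width at most $w$. Any subdivision of $T$ inside $G$ is a finite subgraph, so it would have path-width at most $w$, a contradiction.
\end{itemize}

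\emph{Backward direction.} Suppose the three conditions hold.
\begin{itemize}
\item Let $d$ be the number of vertices of infinite degree.
\item Since only finitely many pairwise disjoint 1-way infinite paths exist, we may take $k$ to be one more than the maximum number of such paths. Then $G$ does not contain $k$ pairwise disjoint 1-way infinite paths.
\item Let $\GG$ be the class of finite graphs containing no subdivision of $T$. This class is monotone, since a subdivision in a subgraph is a subdivision in the whole graph. It excludes subdivisions of a finite tree of maximum degree 3, so by the cited theorem its members have path-width at most some constant $c$.
\item Every finite subgraph of $G$ lies in $\GG$, because a subdivision of $T$ in such a subgraph would be a subdivision of $T$ in $G$.
\end{itemize}
\cref{FinitePathwidthQuantitative} now gives $\pw(G)\leq 2c+(3k-2)(2c+1)+d<\infty$.

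The only delicate point is the monotonicity in the forward direction, namely that path-width does not increase under taking subgraphs. This is immediate from restricting bags, so no real obstacle remains beyond the assumed earlier results.
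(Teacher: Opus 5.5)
Your route is the paper's. You get necessity from \cref{InfiniteDegreeVertices}, \cref{DisjointPaths} and the finite excluded-tree theorem, and sufficiency by feeding $d$, $k$ and the excluded-tree bound $c$ into \cref{FinitePathwidthQuantitative}. The backward direction, and the third condition in the forward direction, are fine.

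One step of the forward direction is false as written, though it is also unnecessary. Your greedy construction of infinitely many disjoint infinite stars fails if each star is completed before the next one is started: an earlier star can use up every neighbour of every later candidate centre. In $K_{\aleph_0,\aleph_0}$, for instance, take a star centred on one side whose leaves are the whole other side. Then no remaining vertex has a fresh neighbour. To get the stars you would have to build them simultaneously: at stage $n$, choose an unused vertex of infinite degree as the $n$-th centre, then give each of the first $n$ centres one new unused neighbour. But none of this is needed. \cref{InfiniteDegreeVertices} and \cref{DisjointPaths} are stated for an arbitrary graph, so applying them to $G$ itself gives $\pw(G)=\infty$ directly, with no appeal to stars or monotonicity.

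A minor caveat on the backward direction concerns taking $k$ to be one more than the \emph{maximum} number of pairwise disjoint rays. This is immediate if the second condition is read as a uniform bound. If it is read as ``no infinite family of pairwise disjoint rays'' (the wording of the abstract), then the existence of this maximum is Halin's theorem: a graph with $k$ disjoint rays for every $k$ has infinitely many disjoint rays. The paper also leaves this step implicit.
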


\cref{FinitePathwidth} shows that path-width is more nuanced than the closely related parameter of tree-width, where the analogous compactness theorem is much simpler to state:   \citet{Thomas88} showed that a graph $G$ has tree-width at most $c$ if and only if every finite subgraph of $G$ has tree-width at most~$c$ (see \citep{KT91,Thomassen89} for simpler proofs).

In light of the above-mentioned sufficient conditions for a graph to have infinite path-width, line-width is a commonly used alternative; see~\citep{DT99,NSS25,NSS-AS-II,NSS-AS-III,CNSS26} and \citep[Chapter~12]{Diestel6}. A \defn{line} is a pair $(L, \preccurlyeq)$, where $L$ is a non-empty set and $\preccurlyeq$ is a total order of $L$. A \defn{line-decomposition} (also called a `\textit{linear decomposition}' in \citep{DT99,Diestel6}) of a graph $G$ is a pair $((L, \preccurlyeq), (B_{t} : t \in L))$ such that $(L, \preccurlyeq)$ is a line and:

\begin{itemize}
    \item $\bigcup_{t \in L}B_{t} = V(G)$,
    \item for each $vw \in E(G)$, there exists $t \in L$ such that $v, w \in B_{t}$, and
    \item $B_{t_{1}} \cap B_{t_{3}} \subseteq B_{t_{2}}$ for all $t_{1}, t_{2}, t_{3} \in L$ with $t_{1} \preccurlyeq t_{2} \preccurlyeq t_{3}$.
\end{itemize}

The \defn{width} of a line-decomposition $((L, \preccurlyeq), (B_{t} : t \in L))$ is $\sup_{t \in L}(|B_{t}| - 1)$. The \defn{line-width} of a graph $G$, denoted by \defn{$\lw(G)$}, is the minimum width of a line-decomposition of $G$. In the case where $L = \mathbb{N}$ and $\preccurlyeq$ is the standard total ordering $\leqslant$ of $\mathbb{N}$, a line-decomposition $((\mathbb{N}, \leqslant), (B_{t} : t \in \mathbb{N}))$ is equivalent to a path-decomposition $(B_t : t \in \mathbb{N})$. So $\lw(G) \leqslant \pw(G)$ for every graph $G$, with equality if $G$ is finite.

\citet*{CNSS26} proved the following compactness result for  line-width, which we use in the proof of \cref{FinitePathwidthQuantitative}.

\begin{thm}[{\protect\citep[Proposition~4.1]{CNSS26}}] 
\label{FiniteLinewidth}
For any graph $G$ and integer $c$, $G$ has line-width at most $c$ if and only if every finite subgraph of $G$ has path-width at most $c$.
\end{thm}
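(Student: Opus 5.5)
The forward direction is routine; for the converse I will replace path-decompositions by linear vertex orderings, where the width condition becomes finitary, and then apply a compactness argument on the space of linear orders of $V(G)$.

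\textbf{Forward direction.} Suppose $((L,\preccurlyeq),(B_t : t\in L))$ is a line-decomposition of $G$ of width at most $c$, and let $H$ be a finite subgraph of $G$.
\begin{itemize}
\item For each $v\in V(H)$ pick one $t$ with $v\in B_t$, and for each $vw\in E(H)$ pick one $t$ with $v,w\in B_t$. This gives a finite set $S\subseteq L$.
\item Order $S$ by $\preccurlyeq$ and take the bags $B_t\cap V(H)$ for $t\in S$.
\item The resulting finite sequence is a path-decomposition of $H$ of width at most $c$. Convexity is inherited, since $B_{t_1}\cap B_{t_3}\subseteq B_{t_2}$ for $t_1\preccurlyeq t_2\preccurlyeq t_3$.
\item Repeating the last bag extends the sequence to one indexed by $\mathbb{N}$.
\end{itemize}

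\textbf{Reformulation via orderings.} For the converse I use the classical fact (Kinnersley) that a finite graph $H$ has path-width at most $c$ if and only if its vertex separation number is at most $c$. That is, there is a linear order $<$ of $V(H)$ such that for every $v$ the set
\[
S_<(v)=\{u<v : u \text{ has a neighbour } w\geq v\}
\]
has size at most $c$.

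Given a linear order $<$ of an arbitrary (possibly infinite) graph $G$ with $|S_<(v)|\le c$ for all $v$, I take the line $(V(G),<)$ with bags $B_v=\{v\}\cup S_<(v)$. I then verify that this is a line-decomposition of width at most $c$:
\begin{itemize}
\item \emph{Edges.} If $uw\in E(G)$ with $u<w$, then $u\in S_<(w)$, so $u,w\in B_w$.
\item \emph{Convexity.} For fixed $u$, the set of $v$ with $u\in B_v$ consists of $u$ together with all $v>u$ that are not above every neighbour of $u$. This set is downward closed within $\{v : v\geq u\}$, hence an interval of the line.
\end{itemize}
So it suffices to find an order of $V(G)$ with vertex separation at most $c$.

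\textbf{Compactness.} Since $V(G)$ is countable, the linear orders of $V(G)$ form a closed subset $\Omega$ of the compact space $\{0,1\}^{V\times V}$. The defining axioms (totality, antisymmetry, transitivity) each involve only finitely many coordinates.

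For a finite $F\subseteq V(G)$, let $\Omega_F$ be the set of orders whose restriction to $F$ has vertex separation at most $c$ with respect to the induced subgraph $G[F]$.
\begin{itemize}
\item \emph{Closed.} Membership in $\Omega_F$ depends only on finitely many coordinates, so $\Omega_F$ is clopen.
\item \emph{Nonempty.} $G[F]$ is a finite subgraph, so $\pw(G[F])\le c$ and an admissible order of $F$ exists. Extend it arbitrarily to an order of $V(G)$.
\item \emph{Monotone.} If $F\subseteq F'$, then $\Omega_{F'}\subseteq\Omega_F$. Indeed, the separation set of $v\in F$ computed in $G[F]$ is contained in the one computed in $G[F']$.
\end{itemize}
Hence the family $\{\Omega_F\}$ has the finite intersection property, and compactness yields an order $<$ lying in every $\Omega_F$.

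\textbf{Main step: the limit order is admissible.} This is where some care is needed. Suppose $|S_<(v)|\ge c+1$ for some $v$. Choose $c+1$ witnesses $u_i\in S_<(v)$ together with neighbours $w_i\ge v$ of each. Let $F$ consist of $v$, the $u_i$ and the $w_i$; this is a finite set. Then already in $G[F]$ the vertex $v$ has separation set of size at least $c+1$, contradicting $<\in\Omega_F$. Therefore $<$ has vertex separation at most $c$, and the construction above gives a line-decomposition of $G$ of width at most $c$.
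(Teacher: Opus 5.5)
Your proof is correct and complete. The paper gives no proof of this statement to compare with: it is quoted from \citep[Proposition~4.1]{CNSS26} and used only as a black box in the proof of \cref{FinitePathwidthQuantitative}.

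Your route is a short, self-contained argument. Using vertex orderings, the index line of the decomposition is $V(G)$ itself, so nothing about the order type has to be controlled in the limit. Moreover, ``width at most $c$'' fails only on a finite witness set: $v$, $c+1$ earlier vertices, and one later neighbour of each. That finiteness is exactly what makes compactness in $\{0,1\}^{V\times V}$ go through. Your checks that $\Omega_F$ is clopen, non-empty and monotone, the finite-witness contradiction, and the convexity of $\{v : u\in B_v\}$ are all sound.

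Two small remarks.

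First, you only use the easy half of Kinnersley's theorem (path-width at most $c$ implies vertex separation at most $c$), and it takes one line. Let $(B_1,\dots,B_n)$ be a path-decomposition of $G[F]$ of width at most $c$. Order $F$ by $l(u)$, the index of the first bag containing $u$, breaking ties arbitrarily. If $u\in S_<(v)$ with witness $w\geq v$, then the edge $uw$ lies in some $B_j$ with $j\geq l(w)\geq l(v)\geq l(u)$. Hence $u\in B_{l(v)}$, so $S_<(v)\subseteq B_{l(v)}\setminus\{v\}$ and $|S_<(v)|\leq c$. The other half is exactly your construction $B_v=\{v\}\cup S_<(v)$, which works verbatim for infinite graphs. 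So the whole argument needs no citation.

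Second, the line $(V(G),<)$ you obtain is in general not a well-order. For the $2$-way infinite path with $c=1$ it cannot be, by \cref{lem:wowpath}. This is why the paper, after invoking this theorem, still needs \cref{lwandwow} to get a well-order-decomposition.
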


\subsection{Universality}

Now consider graph universality. A graph $U$ is \defn{universal}\footnote{Unlike some other papers, we do not require $U \in \mathcal{G}$ for $U$ to be universal for $\mathcal{G}$.} for a graph class $\mathcal{G}$ if $U$ contains every graph in $\mathcal{G}$.

A strong property of a universal graph $U$ for a graph class $\mathcal{G}$ is that $U$ is in $\mathcal{G}$.  For instance, it is straightforward to construct a tree that is universal for the class of all trees 
(see \citep[Theorem~4.1]{HMSTW} for example).  More generally, if $\mathcal{T}_k$ is the class of graphs with tree-width at most $k$, then there exists a universal graph for $\mathcal{T}_k$ that is in $\mathcal{T}_k$.  This follows from the theory of simplicial decompositions~\citep{Thom83,Diestel90,Halin84,Halin82,Halin78}. An alternative and explicit definition is also given in~\citep{HMSTW}. Another example is the \textit{Rado graph} \citep{Ackermann37,ER63a,Rado64}\footnote{Up to isomorphism, the \defn{Rado graph} is the graph with vertex set $\mathbb{N}_{0}$ such that two vertices $x, y \in \mathbb{N}_0$ with $x < y$ are adjacent if and only if the $x$-th bit of the binary representation of $y$ is non-zero. The Rado graph is often called the \defn{random graph}~\citep{Cameron01,Cameron97,Cameron84}, since it has a probabilistic definition given by \citet{ER63a}.}, which contains every graph as an \textit{induced} subgraph.

However, for some graph classes $\mathcal{G}$, there is no universal graph that belongs to $\mathcal{G}$. For example, an old question of Ulam  asks whether there exists a planar graph that is universal for the class of all planar graphs. \Citet{Pach81a} answered Ulam's question in the negative by showing that every universal graph for the class of planar graphs has a $K_5$-minor or a $K_{3, 3}$-minor. Recently,~\citet*{HMSTW} extended Pach's result by showing that every universal graph for the class of planar graphs has an infinite complete graph minor.  In this sense, every universal graph for the class of planar graphs is `far' from being planar. 

If a graph class $\mathcal{G}$ has no universal graph in $\mathcal{G}$, then it is interesting to establish how `close' a universal graph $U$ for $\mathcal{G}$ can be to $\mathcal{G}$. For example, \citet{HMSTW} constructed a universal graph $U$ for the class of planar graphs, where $U$ has linear expansion, and every finite $n$-vertex subgraph of $U$ has a balanced separator of size $\mathcal{O}(\sqrt{n})$. In this sense, $U$ is `close' to being planar.

See \citep{Henson71,Komjath99,HP84,CS07a,CS16,FK97a,FK97b,CS01,KMP88,KP84,CT07,CSS99,DHV85,Diestel85,BHM13,MMS09,Lehner23,Lehner24,Krill23,Krill25,GH23,Georgakopoulos25,BEGHRW} for more results on universality. We refer the interested reader to~\citep[Section 1.4]{HMSTW} for a short survey.

Now consider universality for graphs with given path-width. For each $k \geqslant 0$, let \defn{$\mathcal{P}_{k}$} be the class of graphs with path-width at most $k$. Our first universality result shows that no graph of finite path-width is universal for $\mathcal{P}_1$, even restricted to the class of locally finite graphs with path-width~$1$. Here a graph is \defn{locally finite} if each vertex has finite degree. 

\begin{thm} \label{nouniversalpathwidth} 
Every universal graph for the class of locally finite graphs with path-width $1$ has path-width $\infty$.
\end{thm}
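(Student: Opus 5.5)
The plan is to argue by contradiction. Suppose $U$ is universal for the class of locally finite graphs with path-width $1$ and $\pw(U)\leq c<\infty$. By the necessary conditions discussed before \cref{FinitePathwidthQuantitative}, $U$ has only finitely many vertices of infinite degree; call this set $F$. Also $U$ has no $k$ pairwise disjoint 1-way infinite paths, for some $k$. Fix a path-decomposition $(B_t : t\in\NN)$ of $U$ of width $c$.

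The test graphs are caterpillars. For a function $f:\NN\to\NN$, let $G_f$ be the caterpillar with spine a 1-way infinite path $s_1s_2s_3\dots$, where $s_i$ carries $f(i)$ pendant leaves. Each $G_f$ is locally finite and has path-width $1$: use the bags $\{s_i,s_{i+1}\}$, and insert bags $\{s_i,\ell\}$ for the leaves $\ell$ of $s_i$. So $U$ contains every $G_f$.

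The essential constraint is that the spine must map onto a path: consecutive spine vertices go to adjacent vertices $v_i,v_{i+1}$ of $U$. Thus $U$ must have, for every $f$, a 1-way infinite path $v_1v_2\dots$ with $\deg_U(v_i)\geq f(i)+2$ for all $i$ and with enough private neighbours. A single unbounded-degree caterpillar does not suffice, because the shift freedom is only along one path. I would aim to show that bounded path-width leaves too few essentially different rays to dominate every such degree profile.

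Since $F$ is finite and the embedding is injective, after discarding finitely many spine indices we may assume no $v_i$ lies in $F$. Each $v_i$ then has finite degree. I would restrict $f$ so that spine vertices can use $F$ only boundedly often, for example by making the number of private leaves required exceed $|F|$. This lets us pass to $U-F$, which still has width at most $c$.

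The key structural step is to use the decomposition to control rays. Let $P$ be the set of vertices lying in all but finitely many bags; then $|P|\leq c+1$. Any ray in $U-F-P$ must eventually sweep to the right through the bags, meeting each separator $B_t\cap B_{t+1}$ for large $t$. Set $M(t)=\max\{\deg_U(u): u\in (B_1\cup\dots\cup B_t)\setminus F\}$, which is finite because each bag is finite. I then want a bound of the form: if $v_i\in B_{t}$, then $v_{i+1},\dots,v_{i+r}$ lie in bags whose positions are controlled by $t$, $r$ and $c$, up to the at most $c+1$ persistent vertices and the at most $k-1$ disjoint tails. Given such a bound, a diagonal choice defeats every candidate embedding. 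Alternate $f$ between $0$ on long stretches and values exceeding $M(\,\cdot\,)$ at the correspondingly controlled positions; the spine then cannot keep up with the required degrees.

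I expect the main obstacle to be exactly this control. A single edge of a ray can jump arbitrarily far to the right in the decomposition, so bag index does not simply grow with spine index. The first thing I would try is a counting argument on the separators $B_t\cap B_{t+1}$, each of size at most $c+1$. Each path $v_1v_2\dots$ crosses each separator, and the leaves attached to $v_i$ must lie in bags meeting the interval of $v_i$. Hence a high-degree $v_i$ forces many vertices into a window around its interval. Pigeonholing the $f(i)$ leaves against width $c$ should show that two consecutive huge degrees cannot both be realised within windows of bounded total size. If that direct approach fails, the fallback is to choose $f$ by diagonalising over the countably many starting vertices $v_1\in V(U)\setminus F$. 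For each starting vertex, the set of rays in $U-F$ whose degree profile dominates a given $f$ should be pruned by the condition of no $k$ disjoint rays to finitely many ``tail classes''. Within one class the degree profile is a single function that can be beaten.
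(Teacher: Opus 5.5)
You have not proved the decisive step. Your main route needs the claim that if $v_i\in B_t$ then $v_{i+1},\dots,v_{i+r}$ lie in bags whose positions are controlled by $t$, $r$ and $c$. As you yourself observe, this fails as stated. A vertex of small degree can occupy arbitrarily many bags, so its neighbour may first appear arbitrarily far to the right. Any version of the bound that depends on $U$ is finite only because $U-F$ is locally finite, and at that point bag positions play no role.

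The pigeonhole idea (that two consecutive huge degrees cannot both be realised in a bounded window) is not an argument either. Locally finite caterpillars of path-width $1$ have adjacent spine vertices of arbitrarily large degree, so a contradiction can only come from a global, diagonal choice of $f$, not from a local obstruction.

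Your fallback has the right shape: diagonalise over the countably many starting vertices. The mechanism you attach to it does not work, however. Even if ``tail classes'' means ends, of which there are at most $k-1$, rays in the same end can have entirely different degree sequences. So there is no ``single degree profile'' per class to beat.

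The missing idea is simpler, and it makes both the path-decomposition of $U$ and the ray hypothesis unnecessary. Since $F$ is finite, $U-F$ is locally finite. Hence for each $u\in V(U)\setminus F$ and $r\geq 0$ the ball of radius $r$ about $u$ in $U-F$ is finite, and
\[
N(u,r):=\max\{\deg_U(x): \dist_{U-F}(u,x)\leq r\}
\]
is finite. This one function dominates the degree profile of \emph{every} path in $U-F$ starting at $u$, because the $r$-th vertex of such a path lies in that ball.

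Now enumerate $V(U)\setminus F=\{u_1,u_2,\dots\}$ and set $f(i):=\max_{j\leq i}N(u_j,j)$, which is non-decreasing. Suppose $\phi$ embeds $G_f$ in $U$. Pick $\ell$ with $\phi(s_i)\notin F$ for all $i\geq \ell$, and write $\phi(s_\ell)=u_j$. Then $\phi(s_\ell)\cdots\phi(s_{\ell+j})$ is a path in $U-F$, so
\[
f(\ell+j)+2=\deg_{G_f}(s_{\ell+j})\leq \deg_U(\phi(s_{\ell+j}))\leq N(u_j,j)\leq f(j)\leq f(\ell+j),
\]
a contradiction.

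This is the paper's argument (\cref{lem:infdegree}). It shows directly that every such universal $U$ has infinitely many vertices of infinite degree, and \cref{InfiniteDegreeVertices} then gives $\pw(U)=\infty$.
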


\cref{nouniversalpathwidth} is proved in \cref{InfinitePathwidth}. It implies that for $k \geqslant 1$, if $U$ is universal for $\mathcal{P}_k$, then $U$ is not in $\mathcal{P}_{k}$. In fact, even in the $k=1$ case, the path-width of $U$ must be $\infty$. In this sense, $U$ is `far' from $\mathcal{P}_{k}$. This is in stark contrast to the above-mentioned universality result for graphs with given tree-width. In this sense, universality for graphs with given path-width is subtler than universality for graphs with given tree-width.

Now consider universality for graphs with given line-width. For each $k \geqslant 0$, let \defn{$\mathcal{L}_{k}$} be the class of graphs with line-width at most~$k$. So $\mathcal{P}_k \subseteq \mathcal{L}_k$. The following result constructs a universal graph for $\mathcal{L}_{k}$ that is `close' to~$\mathcal{L}_{k}$.

\begin{restatable}{thm}{universallw}
\label{thm:universallw}
For every integer $k \geq 0$, there is a universal graph for $\mathcal{L}_k$ with line-width at most $4k^2+6k$.
\end{restatable}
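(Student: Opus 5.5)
We need a single countable graph $U$ with line-width at most $4k^2+6k$ that contains every countable graph of line-width at most $k$. The plan is to first normalise the line-decompositions of graphs in $\mathcal{L}_k$, then build $U$ over a single universal countable line, namely the rationals $(\mathbb{Q},\leq)$, and finally read off a decomposition of $U$ from the construction.

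\textbf{Step 1 (normal form).} Let $G\in\mathcal{L}_k$ be countable. By \cref{FiniteLinewidth} it suffices to use that every finite subgraph of $G$ has path-width at most $k$, but we want a structured decomposition. Take a line-decomposition of width $k$. For each vertex $v$, the set of $t$ with $v\in B_t$ is an interval $I_v$ of $L$. Since only the order relations among the countably many intervals matter, we may replace $L$ by a countable suborder. Choose, for each edge $vw$, a witness point in $I_v\cap I_w$, and, for each pair of vertices, points recording how their intervals compare. Then embed this countable line into $\mathbb{Q}$ (every countable linear order embeds in $\mathbb{Q}$). We then $k{+}1$-colour the vertices so that vertices sharing a bag receive distinct colours. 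This is an interval-graph colouring with clique size at most $k+1$; it works on countable sets by compactness, or greedily along an enumeration. So $G$ becomes a family of intervals in $\mathbb{Q}$, one per vertex, each carrying a colour in $\{1,\dots,k+1\}$, with same-coloured intervals disjoint and edges only between overlapping intervals.

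\textbf{Step 2 (construction of $U$).} The difficulty is that endpoints may be irrational cuts and intervals may be open or closed. To handle this, we discretise: each interval of colour $i$ is replaced by a union of consecutive "cells" of a fixed countable skeleton. Concretely, let the vertices of $U$ be the triples $(i,a,b)$ with $i\leq k+1$ and rationals $a<b$ drawn from a fixed dense countable scaffold, together with auxiliary "pieces". Each vertex of $G$ is represented not by one vertex of $U$ but is mapped injectively to a vertex whose interval sits inside its own interval and overlaps the needed neighbours. Two vertices of $U$ are adjacent when their intervals overlap and their colour or index data is compatible. The quadratic bound $4k^2+6k$ suggests the following count. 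A bag of $U$ at a point $q$ contains, for each ordered pair of colours $(i,j)$, a bounded number (about 4) of "transit" vertices that carry an edge between colour classes $i$ and $j$ across $q$, plus $O(k)$ vertices of the form "colour $i$ at $q$".

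\textbf{Step 3 (embedding and width).} We embed $G$ by induction along an enumeration of $V(G)$, back-and-forth style. Each new vertex is placed at a vertex of $U$ of its colour whose interval realises the required overlap pattern with the already placed vertices. This is possible because $U$ is built to be "extension-closed": for every finite configuration of placed intervals and every admissible pattern, a free realising vertex exists. Density of $\mathbb{Q}$ and infinitely many copies per pattern give this, provided the copies are arranged so that bags remain bounded. The width is then checked at each point $q$: we count the vertices of $U$ whose intervals contain $q$ and aim for at most $(k+1)(4k+2)$ such intervals, giving width $4k^2+6k$.

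\textbf{Main obstacle.} The real tension is between extension-closure and bounded width. Having infinitely many copies per pattern threatens to put infinitely many intervals through a point. The way to resolve this is to shrink the copies: place them at disjoint, ever-smaller rational subintervals, so that at any single point only boundedly many copies are live. The cost is that a shrunken copy may fail to overlap a distant future neighbour. I would handle this by splitting each vertex's interval into a bounded chain: a short core, plus "left and right connector" vertices per other colour. These are the $O(k)$ pieces per colour pair accounted for above. Finally we must verify that subgraph containment, rather than minor containment, survives this splitting. This verification is where I expect most of the work.
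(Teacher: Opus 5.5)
Your proposal is not yet a proof, because the graph $U$ is never defined. The adjacency rule (``intervals overlap and their colour or index data is compatible'') and the ``auxiliary pieces'' are left unspecified, so neither universality nor the width bound can be checked. The count of about four transit vertices per ordered pair of colours is reverse-engineered from the target, and it is off by one: $(k+1)(4k+2)=4k^2+6k+2$ vertices per bag means width $4k^2+6k+1$. (In Step~1, greedy colouring along an arbitrary enumeration can exceed $k+1$ colours, e.g.\ on a four-vertex path with its two ends enumerated first; use the compactness argument instead.)

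More seriously, the one concrete mechanism you offer for the tension you correctly identify fails for subgraph containment. Shrinking the realising copies so that boundedly many intervals pass through each point destroys long-range adjacencies that a vertex of $G$ may need, for instance a vertex of infinite degree whose neighbours are spread along the whole line. Your remedy, splitting a vertex of $G$ into a core plus connector vertices, represents it by a connected subgraph of $U$. That gives a minor (or topological-minor) model of $G$, not a subgraph. In a subgraph embedding each vertex of $G$ must be a single vertex of $U$ carrying all its adjacencies, which is exactly what the shrinking was meant to avoid. So the step you defer as ``where most of the work lies'' is the entire content of the theorem. Working over $\mathbb{Q}$ also leaves you no point from which to start a left-to-right sweep, since the line has no least element.

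The missing idea is to pass from line-width to well-order-width. By \cref{lwandwow}, $\mathcal{L}_k\subseteq\mathcal{W}_{2k}$, and the paper shows (\cref{universalwow}) that $\mathcal{W}_k$ has a universal graph of well-order-width at most $k^2+3k$. Substituting $2k$ gives exactly $4k^2+6k$, and $\lw\leq\wow$ finishes.

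The well-order is what makes a greedy argument possible. Take a width-$k$ well-order-decomposition of a connected $G$. Let $x_0$ be the least index, and let $x_i$ be the least index whose bag misses $B_{x_0}\cup\dots\cup B_{x_{i-1}}$. Every bag with index in $[x_i,x_{i+1})$ then meets $B_{x_i}$, so removing $B_{x_i}$ from these bags leaves width $k-1$. Moreover, segments interact only through the chosen, pairwise disjoint bags, and only with neighbouring segments.

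The universal graph is then built by induction, with no extension property needed. Take disjoint copies $H_i$ of $U_{k-1}$ and $(k+1)$-cliques $S_i$. Make $S_i$ complete to $S_{i+1}$, and make $H_i$ complete to $S_i\cup S_{i+1}$. Map $B_{x_i}$ into $S_i$ and the remaining vertices of segment $i$ into $H_i$. Disjoint copies handle disconnected graphs. Adding $S_i\cup S_{i+1}$ to the bags of $H_i$ gives the recurrence $w_k=w_{k-1}+2(k+1)$, hence $w_k=k^2+3k$.
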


Complementing \cref{thm:universallw}, the next result implies that for $k \geqslant 2$, no universal graph for $\mathcal{L}_k$ is in $\mathcal{L}_k$.

\begin{restatable}{thm}{thmlowerbound} 
\label{thm:lowerbound} 
    For every integer $k \geqslant 2$, every universal graph for $\mathcal{P}_k$ has line-width at least $k + 1$.
\end{restatable}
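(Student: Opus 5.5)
The plan is to argue by contradiction. Suppose $U$ is universal for $\mathcal{P}_k$ and $\lw(U)\leq k$. Fix a line-decomposition $\mathcal{D}=((L,\preccurlyeq),(B_t:t\in L))$ of $U$ of width at most $k$. Every subgraph $G\subseteq U$ inherits a line-decomposition of width at most $k$ by intersecting each bag with $V(G)$. The inequality $\lw\leq k$ alone yields no contradiction, since every graph in $\mathcal{P}_k$ has line-width at most $k$ anyway. The contradiction must therefore come from two features of $U$: it contains \emph{many} graphs of $\mathcal{P}_k$, and they all sit inside one common decomposition $\mathcal{D}$. The final target is a finite subgraph of $U$ of path-width at least $k+1$, which contradicts \cref{FiniteLinewidth}. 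This is where $k\geq 2$ should enter.

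The first step is to design gadget graphs in $\mathcal{P}_k$ that force rigid behaviour in any width-$k$ line-decomposition. I would use a vertex $x$ of infinite degree. Its neighbourhood is the disjoint union of infinitely many copies of a finite graph $H$ with $\pw(H)=k-1$ and with no "slack"; for instance $H$ could be a complete ternary tree of suitable height, since these have path-width exactly their height-related value. This gadget is in $\mathcal{P}_k$: put $x$ in every bag and concatenate optimal decompositions of the copies of $H$. In any width-$k$ line-decomposition, the interval of $x$ must contain all the bags covering the copies of $H$. Within that interval, each copy of $H$ uses all $k$ remaining slots somewhere. So $x$ is "saturating", and no other long-lived vertex can share the interval of $x$ at those points. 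I would prove a rigidity lemma: in $\mathcal{D}$, any vertex whose interval meets the region where some copy of $H$ is saturating must be $x$ or a vertex of that copy. Since there are infinitely many copies, cofinally many points of $x$'s interval in $L$ are saturated.

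The second step uses universality again, with a combined graph $G^\ast\in\mathcal{P}_k$. Take two such gadgets with centres $x,y$, and join them by a structure that still has a width-$k$ path-decomposition. An example is a long path from a copy of $H$ in the first gadget to a copy in the second, or identifying a vertex of $H$ in one gadget with a neighbour of $y$. In a path-decomposition of $G^\ast$, the gadgets can be laid out one after the other, which is what keeps $G^\ast$ in $\mathcal{P}_k$. The point is to exploit that $U$ must contain \emph{infinitely many} such configurations simultaneously, for different choices of $H$ or different attachment patterns, with all the centres being vertices of $U$. The centres need not be distinct. The plan is to apply pigeonhole and the order structure of $L$. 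Either two gadget centres have intervals that interleave in $\mathcal{D}$, or the connecting structure must cross a saturated region. In both cases the rigidity lemma should fail. From this failure I would extract a finite subgraph: a centre, several copies of $H$, and a crossing path. I would then show directly that this subgraph has path-width at least $k+1$, by a standard argument that a vertex adjacent to three branches of path-width $k-1$ joined through a further path raises the path-width by one. This is where $k\geq 2$ is needed, because for $k=1$ the copies of $H$ are just edges and give no saturation.

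The main obstacle is the second step: choosing the family of test graphs so that each individually lies in $\mathcal{P}_k$, while their joint presence in $U$ is incompatible with width $k$. The contradiction must use something that distinguishes path-width from line-width, namely the order type $\mathbb{N}$. Two infinite saturated intervals cannot both be "unbounded to the right" inside a single $\omega$-shaped layout of a connected test graph. In contrast, inside $U$, universality for $\mathcal{P}_k$ forces us to also embed the mirror-image arrangement. My first attempt would be to take test graphs $G_n$ in which gadget $n$ is connected to gadget $n+1$. I would then argue that in $\mathcal{D}$ the gadget centres must be placed monotonically, and derive a contradiction from a second family in which the connections are reversed, so that both families map onto the same centre of high degree in $U$.
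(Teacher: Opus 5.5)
There is a genuine gap. The mechanism you propose for step~2 cannot produce a contradiction as long as your test family is countable, and every family you describe is countable: a chain $G_n$ of gadgets, a ``mirrored'' family, finitely or countably many choices of $H$ and attachment patterns. Take any countable $\mathcal{F}\subseteq\mathcal{P}_k$. The disjoint union of its members is a countable graph containing every member of $\mathcal{F}$, and by \cref{lem:unionlw} it has line-width at most $k$. In that graph no two copies interleave, none share a centre, and no finite subgraph has path-width $k+1$. Your argument only uses that $U$ contains the test graphs and has a width-$k$ line-decomposition, so it would apply verbatim to this disjoint union and must therefore fail.

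For the same reason, your diagnosis that the order type $\mathbb{N}$ has to be exploited is a misdirection. The line-decomposition of $U$ may place embedded copies anywhere in $L$ and in either orientation. The paper's contradiction does not use the order type at all; it is a cardinality argument. The infinite-degree gadgets are also not the point: the paper's test graphs are locally finite.

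The missing idea is a counting argument with uncountably many \emph{rigid} test graphs.

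\begin{enumerate}
\item For each $k$-feasible sequence $\mathbf{s}$, the paper builds $G_{\mathbf{s}}$ from a chain of $(k+1)$-cliques $B_i^{\mathbf{s}}$, with consecutive cliques sharing exactly $k$ vertices. Distinct sequences give non-isomorphic graphs.
\item For $k\geq 2$ there are uncountably many such sequences. This is where $k\geq 2$ enters, not through saturation. For $k=1$ the only sequence gives the path, consistent with \cref{prop:lw1universal}.
\item By \cref{lem:countableline}, the decomposition of $U$ may be taken to have countable $L$ and pairwise distinct bags.
\item Each clique $B_i^{\mathbf{s}}$ must then map exactly onto a bag $U_{\ell_i}$, and the indices $\ell_i$ are monotone in $L$.
\item Each $\ell_{i+1}$ is the unique element, on the appropriate side of $\ell_i$, among bags containing no vertex that lies in only one bag, whose bag has size $k+1$ and shares $k$ vertices with $U_{\ell_i}$. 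Hence the whole embedding is determined by the pair $(\ell_1,\ell_2)$.
\item The embedding is induced, since an extra edge would create a $(k+2)$-clique in $U$.
\end{enumerate}

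So uncountably many isomorphism types would inject into the countable set $L\times L$, which is the contradiction. If you want to keep your framework, you would need exactly these two ingredients: an uncountable family, and embeddings pinned down by finitely many elements of a countable $L$. The finite subgraph of path-width $k+1$ you are aiming for then exists only non-constructively, via \cref{FiniteLinewidth}.
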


Note that $k \geqslant 2$ is necessary in~\cref{thm:lowerbound}, since there is a graph with line-width $1$ that is universal for $\mathcal{L}_1$ (see~\cref{prop:lw1universal}).

The proofs of~\cref{thm:universallw,thm:lowerbound}, which are presented in \cref{sec:proof2,sec:proof3} respectively, are inspired by analogous results for finite graphs by \citet*{BEGHRW}.

\section{Path-width, Line-width and Well-order-width} \label{sec:compare}

This section introduces well-order-width, which is a tool used throughout the paper. In a line-decomposition, bags are indexed by an arbitrary totally ordered set, which can make them difficult to work with. To tackle this issue, \citet*{NSS25} introduced well-order-decompositions (which they called `\textit{wo-decompositions}'). A total order $\preccurlyeq$ of a non-empty set $L$ is a \defn{well-order} if there is no infinite sequence $t_{1}, t_{2}, \dots$ of distinct elements of $L$ such that $t_{i + 1} \preccurlyeq t_{i}$ for each $i \geqslant 1$. 
Equivalently, every non-empty subset $A$ of $L$ has an element $a \in A$, called the \defn{least element} of $A$, such that $a \preccurlyeq b$ for every $b \in A$.  A \defn{well-order-decomposition} is a line-decomposition $((L, \preccurlyeq), (B_{t} : t \in L))$ such that $\preccurlyeq$ is a well-order of $L$. The \defn{well-order-width} (called `\textit{wo-width}' in \citep{NSS25}) of a graph $G$, denoted by \defn{$\wow(G)$}, is the minimum width of a well-order-decomposition of $G$. For each $k \geqslant 0$, let \defn{$\mathcal{W}_{k}$} be the class of graphs with well-order-width at most $k$.

Observe that for every graph $G$,
\begin{equation} \label{observation} \tag{1}
\lw(G) \leqslant \wow(G) \leqslant \pw(G).
\end{equation}
Moreover, \citet[1.3]{NSS25} proved the following result.

\begin{thm}[\citep{NSS25}] \label{lwandwow} 
    For every graph $G$,
    \[\wow(G) \leqslant 2\lw(G).\]
\end{thm}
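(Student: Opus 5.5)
The plan is to start from a line-decomposition $((L,\preccurlyeq),(B_t:t\in L))$ of $G$ of width $k=\lw(G)$ and rebuild it, by \emph{folding} the line around a base point, into a well-order-decomposition whose bags are unions of at most two original bags. The factor $2$ in the statement should come from this folding. I have not checked that a plain union of two bags stays within width $2k$, rather than $2k+1$.

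\textbf{Step 1 (countable index set).} First I reduce to a countable line. Since $G$ is countable, I choose a countable set $L'\subseteq L$ containing one index $t$ with $v\in B_t$ for each vertex $v$, and one index $t$ with $v,w\in B_t$ for each edge $vw$. Restricting to $L'$ keeps the covering conditions. It also keeps the interpolation property $B_{t_1}\cap B_{t_3}\subseteq B_{t_2}$, because that property only concerns triples. So I may assume $L$ is countable.

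\textbf{Step 2 (normalise bags).} For each vertex $v$, let $I_v=\{t: v\in B_t\}$, which is a convex subset of $L$. I will also use \cref{FiniteLinewidth} if needed, to replace the decomposition by one in which no bag is redundant. That is, each bag is the unique witness of some vertex or edge, up to identifying consecutive equal bags.

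\textbf{Step 3 (folding).} Fix a base point $s\in L$. Enumerate $L=\{t_1,t_2,\dots\}$ and define, by recursion over an ordinal, a sequence of \emph{explored} convex sets $J_\alpha\subseteq L$ that contain $s$ and grow with $\alpha$. At a successor step I add the least-indexed unexplored $t_n$ together with the whole segment between it and $J_\alpha$. For that segment I recursively use a well-order-decomposition of the sub-line between the old endpoint and $t_n$, and I union each of its bags with the bag at the current opposite frontier of $J_\alpha$. At limit steps I take unions.

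A vertex in a bag on the left frontier and one on the right frontier are then separated only by explored territory. Interpolation for each $I_v$ then reduces to the facts that each $I_v$ is convex and that frontiers move monotonically outward. Each new bag is contained in $B_a\cup B_b$ for some pair $a\preccurlyeq b$, which is where the width bound of roughly $2k$ comes from.

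\textbf{Main obstacle.} The hardest part is Step 3. The sub-line being inserted need not be well-ordered, so the recursion must be nested, and I must check that nesting does not increase the number of original bags unioned together beyond two. The point to verify is that inside a nested segment only one outer frontier is still relevant: every vertex alive on the other outer side has an interval that already ended, by convexity. This is what should keep the bags as unions of two original bags rather than more.

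If the nesting causes trouble, I would first try a different route. I would use compactness via \cref{FiniteLinewidth} to get path-decompositions of the finite subgraphs $G[\{v_1,\dots,v_n\}]$. I would then fold these around the interval of $v_1$, and pass to a limit along a subsequence that stabilises the folded bags on each initial segment. In that limit the ordinal indexing arises from the successive positions at which new vertices get inserted.
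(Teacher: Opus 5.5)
\paragraph{Verdict.} The paper does not prove this statement; it imports it from \citep{NSS25}. So your argument has to stand on its own, and it has a genuine gap exactly where you locate the difficulty: Step~3 is not yet a construction.

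\paragraph{The recursion in Step~3.} You handle the segment between the frontier of $J_\alpha$ and $t_n$ by ``recursively using a well-order-decomposition of the sub-line''. This assumes the theorem itself, in a stronger boundary-respecting form, for an arbitrary countable line. No parameter decreases, so on a dense line such as $\mathbb{Q}$ the nesting never bottoms out. Two things then break.
\begin{enumerate}
\item The index set need not be well-ordered. Unfold the recursion in the natural way: each segment is its base point followed by an $\omega$-sequence of sub-segments. Take the second top-level segment, the second sub-segment of the first segment, the second sub-sub-segment of the first sub-segment of the first segment, and so on. Their base points form an infinite strictly decreasing sequence.
\item The memory argument is wrong. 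A vertex whose interval crosses the near frontier $f$ of $J_\alpha$ into the new segment $S$ has not ``already ended''. It lies in earlier bags, and $S$ need not have a least element, so the inner traversal cannot start next to $f$. That vertex must therefore stay in every inner bag until its interval is exhausted. Meanwhile the inner folding must also carry the vertices crossing its own opposite frontier, while the outer opposite frontier is still pending. Each level of nesting can add another pending cut, and nothing confines the bags to a union of two original bags.
\end{enumerate}

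\paragraph{The width count.} Even at one level the count is off, as you suspected. $B_a\cup B_b$ can have $2k+2$ vertices, and at a frontier cut with no extreme element there is no ``bag at the frontier'' at all. What should be carried is the set $X_c$ of vertices $v$ whose interval $I_v$ meets both sides of a cut $c$ of $L$.

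First keep one index per distinct bag, as in \cref{lem:countableline}; this, not \cref{FiniteLinewidth}, is the normalisation Step~2 needs. Then $|X_c|\le k$. Indeed, suppose $v_1,\dots,v_{k+1}$ crossed $c$. Choose $a_i\in I_{v_i}$ left of $c$ and $b_i\in I_{v_i}$ right of $c$, and put $a=\max_i a_i$ and $b=\min_i b_i$. Convexity gives $\{v_1,\dots,v_{k+1}\}\subseteq B_a\cap B_b$, which forces $B_a=B_b$ with $a\neq b$, a contradiction. Hence bags of the form $B_p\cup X_c$ have width at most $2k$.

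\paragraph{What is missing.} The real content of the theorem is a genuinely well-ordered traversal in which, at every moment, the vertices carried outside the current original bag form the crossing set of a \emph{single} cut. This must hold even across dense or otherwise non-well-ordered stretches of $L$. Your proposal does not supply this. The fallback via compactness and limits of folded finite path-decompositions is too vague to do so: nothing in it explains why the limiting index set is well-ordered or why the width stays at $2k$.
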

So line-width and well-order-width are within a factor 2 of each other. Our proof of \cref{thm:universallw}, given in \cref{sec:proof2}, constructs a universal graph for $\mathcal{W}_{k}$ that is `close' to $\mathcal{W}_{k}$ and applies \cref{lwandwow}.

We finish this section with the following elementary lemma used later. 

\begin{lem} \label{lem:unionwow} 
The well-order-width of any graph $G$ is the supremum of the well-order-widths of the connected components of $G$.  
\end{lem}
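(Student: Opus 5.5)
We prove the two inequalities separately. For the easy direction, let $H$ be a connected component of $G$ and let $((L,\preccurlyeq),(B_t : t\in L))$ be a well-order-decomposition of $G$ of width $w$. Restricting every bag to $V(H)$ gives $(B_t\cap V(H) : t\in L)$. This is still a well-order-decomposition: the vertex and edge conditions are inherited, and $(B_{t_1}\cap V(H))\cap(B_{t_3}\cap V(H))\subseteq B_{t_2}\cap V(H)$ whenever $t_1\preccurlyeq t_2\preccurlyeq t_3$. Its width is at most $w$. (Empty bags are harmless, since the definition does not require bags to be non-empty.) Hence $\wow(H)\leq\wow(G)$, so the supremum over components is at most $\wow(G)$.

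For the reverse direction, let $s$ be the supremum of $\wow(H)$ over the components $H$ of $G$; we may assume $s<\infty$. Since $G$ is countable, its components can be enumerated as $H_1,H_2,\dots$ (a finite or countable list). Choose for each $i$ a well-order-decomposition $((L_i,\preccurlyeq_i),(B^i_t : t\in L_i))$ of $H_i$ of width at most $s$; this is possible because widths are integers, so each minimum exists and is at most $s$. We concatenate these in order. Let $L=\{(i,t): t\in L_i\}$, ordered lexicographically: first by $i$, then by $\preccurlyeq_i$. Set $B_{(i,t)}=B^i_t$.

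We then check three things. First, $L$ is a well-order: given a non-empty $A\subseteq L$, take the least index $i$ appearing in $A$, and then the $\preccurlyeq_i$-least $t$ with $(i,t)\in A$. Equivalently, $L$ is the ordinal sum of well-orders indexed by a well-ordered set. Second, the cover and edge conditions hold because every vertex and every edge of $G$ lies in some component $H_i$, and the $i$-th block of bags covers $H_i$. Third, for the consecutiveness condition, take $v\in B_{(i_1,t_1)}\cap B_{(i_3,t_3)}$. Since $v$ lies in exactly one component, $i_1=i_3=i$. Any $(i_2,t_2)$ between these two elements therefore has $i_2=i$ and $t_1\preccurlyeq_i t_2\preccurlyeq_i t_3$, so $v\in B^i_{t_2}$ by the decomposition of $H_i$. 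The width of the combined decomposition is $\sup_i$ of the widths, which is at most $s$.

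The only step needing any care is verifying that the concatenated index set is genuinely a well-order; everything else is routine bookkeeping. This is also where the well-order setting is used: the same concatenation would not work for path-decompositions indexed by $\mathbb{N}$ once there are infinitely many infinite components.
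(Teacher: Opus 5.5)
Your proof is correct and takes the same approach as the paper: monotonicity under subgraphs for the easy direction, and concatenating well-order-decompositions of the components (ordered first by component index, then within each component) for the reverse direction. The only difference is that you spell out the bag-restriction argument and the well-ordering check, which the paper leaves implicit.
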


\begin{proof}
Since well-order-width is monotone under taking subgraphs, the well-order-width of $G$ is at least the supremum of the well-order-widths of the connected components of $G$. Now we prove the converse. 
If the supremum is infinite, then the result is immediate.
Otherwise, let $H_{1}, H_{2}, \dots$ be the connected components of $G$ (this collection of components might be finite or infinite) and note that their well-order-widths have a finite maximum. 
It suffices to construct a well-order-decomposition of $G$ with width $\max(\wow(H_{1}), \wow(H_{2}), \dots)$. For each component $H_{i}$, there is a well-order-decomposition $((L_{i}, \preccurlyeq_{i}), (B_{t}^{i} : t \in L_{i}))$ of $H_{i}$ with width $\wow(H_{i})$. We may and will assume that $L_{1}, L_{2}, \dots$ are pairwise disjoint. Let $L := L_{1} \cup L_{2} \cup \dots$ and $\preccurlyeq$ be the concatenation of $(\preccurlyeq_{1}$, $\preccurlyeq_{2}, \dots)$. That is, for each $t_1, t_2 \in L$, we have $t_1 \preccurlyeq t_2$ if and only if: (i) $t_1 \in L_{i}$ and $t_2 \in L_{j}$ for some $i, j \in \mathbb{N}$ with $i < j$, or (ii) $t_1 \in L_{i}$, $t_2 \in L_{i}$ and $t_1 \preccurlyeq_{i} t_2$ for some $i \in \mathbb{N}$. Since $\preccurlyeq_{i}$ is a well-order of $L_{i}$ for any component $H_{i}$, we have that $\preccurlyeq$ is a well-order of $L$. For each component $H_{i}$ and each $t \in L_{i}$, let $B_{t} := B_{t}^{i}$. So for each $t \in L$, we have $|B_{t}| \leqslant \max(\wow(H_{1}), \wow(H_{2}), \dots) + 1$. Hence $((L, \preccurlyeq), (B_{t} : t \in L))$ is a well-order-decomposition of $G$ with width $\max(\wow(H_{1}), \wow(H_{2}), \dots)$. This completes the proof.
\end{proof}

An analogous proof shows the following. 

\begin{lem} \label{lem:unionlw} 
The line-width of any graph $G$ is the supremum of the line-widths of the connected components of $G$.  
\end{lem}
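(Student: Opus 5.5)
The plan is to mirror the proof of \cref{lem:unionwow}, replacing well-orders by arbitrary total orders. Since line-width is monotone under taking subgraphs (restrict every bag of a line-decomposition of $G$ to the vertex set of the subgraph), $\lw(G)$ is at least the supremum of the line-widths of the components of $G$. For the reverse inequality, if the supremum is infinite there is nothing to prove. Otherwise the line-widths of the components $H_1, H_2, \dots$ are integers bounded above, so they attain a finite maximum $m$. It then suffices to build a line-decomposition of $G$ of width $m$.

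For each component $H_i$, fix a line-decomposition $((L_i,\preccurlyeq_i),(B^i_t : t\in L_i))$ of width $\lw(H_i)\leq m$, with $L_1, L_2, \dots$ pairwise disjoint. Let $L := \bigcup_i L_i$, ordered by the concatenation $\preccurlyeq$ as in the proof of \cref{lem:unionwow}: elements of $L_i$ precede those of $L_j$ when $i<j$, and within $L_i$ we use $\preccurlyeq_i$. This is a total order on a non-empty set, so $(L,\preccurlyeq)$ is a line. Unlike in \cref{lem:unionwow}, there is no well-foundedness to check. Set $B_t := B^i_t$ for $t\in L_i$.

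Next we verify the three axioms. The bags cover $V(G)=\bigcup_i V(H_i)$. Every edge of $G$ lies in a single component $H_i$, so it is covered by some bag indexed by $L_i$. For the interpolation condition, take $t_1\preccurlyeq t_2\preccurlyeq t_3$ and suppose $v\in B_{t_1}\cap B_{t_3}$. Then $v\in V(H_i)$ for a unique $i$, and since bags indexed by $L_j$ contain only vertices of $H_j$, both $t_1,t_3\in L_i$. The concatenated order makes each $L_i$ an interval of $L$, so $t_2\in L_i$ as well. The interpolation property of the decomposition of $H_i$ then gives $v\in B_{t_2}$. Every bag has size at most $m+1$, so the width is $m$.

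The only step needing any care is this convexity argument, namely that each $L_i$ is an interval in the concatenated order. It holds by construction, so there is no genuine obstacle.
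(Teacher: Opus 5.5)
Your proposal is correct and takes the same route as the paper, which proves \cref{lem:unionlw} by noting that the argument for \cref{lem:unionwow} carries over: monotonicity gives one inequality, and concatenating the components' line-decompositions gives the other. You also make explicit the interpolation check via convexity of each $L_i$ in the concatenated order, which the paper leaves as an observation.
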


 \section{Graphs with Infinite Path-width}
 \label{InfinitePathwidth}


This section proves the necessary conditions for a graph to have finite path-width introduced in \cref{sec:intro}. 

\begin{lem} \label{lem:inifinite_conn}
For any graph $G$ and integer $w\geq 1$, if there exist $w$ pairwise disjoint subsets $S_1,S_2,\dots,S_w$ of $V(G)$ such that $G[S_i]$ is connected and $N_G[S_i]$ is infinite for every $i\in\{1,\dots,w\}$, then $\pw(G) \geq w$.
\end{lem}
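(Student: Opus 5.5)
The plan is to argue by contradiction. Suppose $(B_t : t \in \mathbb{N})$ is a path-decomposition of $G$ with every bag of size at most $w$. We will find a single index $t$ such that $B_t$ meets every $S_i$. Moreover, we will show that $B_t$ must contain one further vertex beyond these $w$ intersections, which forces $|B_t|\geq w+1$. The key object for each $i$ is the set $I_i := \{t : B_t \cap S_i \neq \emptyset\}$. Since $G[S_i]$ is connected and the bags containing each vertex form an interval, $I_i$ is an interval of $\mathbb{N}$. Indeed, adjacent vertices of $S_i$ share a bag, so their intervals overlap, and a connected union of overlapping intervals is an interval.

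\emph{Each $I_i$ is infinite.} Since $N_G[S_i]$ is infinite, either $S_i$ is infinite or some vertex of $S_i$ has infinitely many neighbours. In the first case, each bag is finite, so infinitely many bags are needed to cover $S_i$; hence $I_i$ is unbounded. In the second case, the vertex $v\in S_i$ with infinitely many neighbours must share a bag with each of them. Again by finiteness of bags, the bags containing $v$ are unbounded in $t$, so $v$ lies in all bags from some point on. In both cases $I_i$ is an infinite interval, i.e.\ $I_i \supseteq [a_i,\infty)$ for some $a_i$.

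\emph{Conclusion.} Let $T := \max_i a_i$. For every $t\geq T$, the bag $B_t$ meets all $w$ pairwise disjoint sets $S_i$, so $|B_t|\geq w$. To get the extra vertex, consider the finite set $X := B_T$. Choose an index $i$ and a vertex of $N_G[S_i]$ that appears only in bags after $T$. This is possible because $N_G[S_i]$ is infinite, while only finitely many vertices lie in $B_0\cup\dots\cup B_T$. More precisely, there is an edge $xy$ with $x\in S_i$, $y\in N_G[S_i]$, and $y\notin B_0\cup\dots\cup B_T$. (If all of $N_G[S_i]$ appeared in bags up to $T$, it would be finite.)

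Let $s$ be an index with $x,y\in B_s$, so $s>T$. In $B_s$ we have the $w$ vertices from the $S_j$, and $y$ is one more unless $y\in S_j$ for some $j$. The delicate case is $y\in S_j$ with $y$ being the only representative of $S_j$ in $B_s$. To handle it, choose the index $s$ more carefully. Take $s$ to be the first bag containing $y$. Then $y$ is absent from $B_{s-1}$, yet $S_j$ meets $B_{s-1}$ because $s-1\geq T$. Connectivity of $G[S_j]$ then gives an edge inside $S_j$ from the earlier part of $S_j$ to the part newly entering, so some bag near $s$ contains two vertices of one $S_j$.

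I expect this counting of the $(w+1)$-st vertex to be the main obstacle. The clean way to do it is to pick $y\in N_G[S_i]$ whose first bag index $s$ exceeds $T$, with $x\in S_i$ a neighbour of $y$, or $x=y$ if needed. Then one checks that $B_s$ contains $y$ together with a vertex of each $S_j$ other than $y$. For $j$ with $y\notin S_j$, $S_j$ meets $B_s$ directly. For the $j$ with $y\in S_j$, we use that $S_j$ meets $B_{s-1}\cap B_s$ via the interval property along a path in $G[S_j]$ from a vertex in $B_{s-1}$ to $y$. Such a path exists because $y$'s interval starts at $s$ while $I_j$ contains $s-1$. This yields $|B_s|\geq w+1$, so the width is at least $w$.
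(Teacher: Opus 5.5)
Your proof is correct. It shares the paper's skeleton: each $I_i$ is an interval and in fact a final segment $[a_i,\infty)$, and the common index $T=\max_i a_i$ (the paper's $m$) gives $|B_t|\geq w$ for all $t\geq T$. Where you depart from the paper is in producing the $(w+1)$-st vertex.

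\emph{The paper's route.} It splits into cases. If some $S_i$ is infinite, it takes an edge of $G[S_i]$ not contained in $B_1\cup\dots\cup B_m$. If all $S_i$ are finite, it takes an edge from $S_1$ to a vertex outside $S_2\cup\dots\cup S_w$, again not so contained. In either case the bag covering that edge lies after $m$, so it holds one extra vertex beyond the $w$ representatives. This uses only the edge-covering axiom.

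\emph{Your route.} You take the first bag $B_s$ containing a vertex $y\notin B_1\cup\dots\cup B_T$. If $y$ lies in no $S_j$, it is the extra vertex. If $y\in S_j$, you use the connectivity of $G[S_j]$ together with $s-1\in I_j$ to find a second vertex of $S_j$ in $B_{s-1}\cap B_s$. This handles finite and infinite $S_i$ uniformly.

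Two simplifications follow from your route. First, your last step only needs some vertex outside $B_1\cup\dots\cup B_T$, which exists because $V(G)$ is infinite. So the edge $xy$, the requirement $y\in N_G[S_i]$, and the intermediate paragraph about ``some bag near $s$'' can all be dropped.

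Second, the crossing step deserves one explicit line in a write-up. Take a path $z=p_0,\dots,p_r=y$ in $G[S_j]$ with $z\in B_{s-1}$. Suppose no $p_l$ lies in $B_{s-1}\cap B_s$. Then the bags containing each $p_l$ lie entirely at or before $s-1$, or entirely at or after $s$. Here $p_0$ is of the first kind and $p_r$ of the second, so some edge $p_lp_{l+1}$ would lie in no common bag, a contradiction. The vertex found is distinct from $y$ because $y\notin B_{s-1}$.
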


\begin{proof}
Suppose to the contrary that there exists a path-decomposition $(B_{t} : t \in \mathbb{N})$ of $G$ of width at most $w-1$. 
For $i\in\{1,\dots,w\}$, let $I_i=\{t \in \mathbb{N}: B_t \cap S_i \neq \emptyset\}$.
Then each $I_i$ is an interval in $\mathbb{N}$ since $G[S_i]$ is connected.
By assumption, for each $i\in\{1,\dots,w\}$, there are infinitely many edges with at least one endpoint in $S_i$.   Since each bag is finite, $I_i$ is infinite for every $i\in\{1,\dots,w\}$.  Let $m:=\max_{i \in \{1, \dots, w\}} \min I_i$.  
Thus, $S_i \cap B_m \neq \emptyset$ for all $i \in \{1, \dots, w\}$.  Since $S_1, \dots, S_w$ are pairwise disjoint, $|B_t|\geq w$ for all $t \geq m$.  Suppose $S_1$ is infinite. Since $\bigcup_{i=1}^m B_i$ is finite, there exists $uv \in E( G[S_1] )$ such that  $\{u,v\} \not \subseteq \bigcup_{i=1}^m B_i$.  Thus, $\{u,v\} \subseteq B_{t'}$ for some $t' > m$, which implies that $|B_{t'}| \geq w+1$. Thus, we may assume that each $S_i$ is finite.  But now, there exists $uv \in E(G)$ such that $u \in S_1$, $v \notin \bigcup_{i=2}^w S_i$, and  $\{u,v\} \not \subseteq \bigcup_{i=1}^m B_i$.  Again, this implies that $|B_{t'}| \geq w+1$ for some $t'>m$.  
\end{proof}

\cref{lem:inifinite_conn} implies the following result 
by taking $S_i=V(P_i)$ for each $i\in\{1,\dots,k\}$.

\begin{cor} 
\label{DisjointPaths}
If a graph $G$ contains $k$ pairwise disjoint 1-way infinite paths $P_1,\dots,P_k$, then $\pw(G)\geq k$. 
If a graph $G$ contains infinitely many pairwise disjoint 1-way infinite paths, then $\pw(G)=\infty$.
\end{cor}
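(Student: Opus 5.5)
The first assertion follows from \cref{lem:inifinite_conn} with $w=k$ and $S_i:=V(P_i)$ for each $i\in\{1,\dots,k\}$. We check the hypotheses in turn. The sets $V(P_1),\dots,V(P_k)$ are pairwise disjoint because the paths are. Each $G[V(P_i)]$ is connected, since it contains the connected spanning subgraph $P_i$. Finally, $N_G[V(P_i)]\supseteq V(P_i)$ is infinite, because a 1-way infinite path has infinitely many vertices. The lemma then gives $\pw(G)\geq k$ directly.

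For the second assertion, suppose $G$ contains infinitely many pairwise disjoint 1-way infinite paths. Then for every $k\in\mathbb{N}$ it contains $k$ of them, so the first assertion gives $\pw(G)\geq k$ for every $k$, and hence $\pw(G)=\infty$.

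There is no real obstacle here. The only point needing a little care is that $N_G[S_i]$ is the closed neighbourhood, so it contains $S_i$ itself and is therefore infinite as soon as $S_i$ is infinite. All the substantive work, namely the case where $S_i$ is infinite, was already done inside the proof of \cref{lem:inifinite_conn}.
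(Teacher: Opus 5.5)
Your proposal is correct and is essentially the paper's own argument: the paper also applies \cref{lem:inifinite_conn} with $S_i=V(P_i)$, where connectivity of $G[V(P_i)]$ and infiniteness of $N_G[V(P_i)]\supseteq V(P_i)$ are immediate. The second assertion then follows by letting $k$ grow, exactly as you do; the case $k=0$, which the lemma's hypothesis $w\geq 1$ excludes, is trivial.
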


Note that $\pw(G)\geq k$ is tight in \cref{DisjointPaths}, since $P_1\sqcup\dots\sqcup P_k$ has a path-decomposition with width $k$, where each bag consists of the ends of some edge in some $P_i$ and one vertex from every other $P_j$. 

Similarly, \cref{lem:inifinite_conn} implies the following result 
by taking $S_i=\{v_i\}$ for each $i\in\{1,\dots,k\}$.

\begin{cor} 
\label{InfiniteDegreeVertices}
If a graph $G$ has $k$ vertices $v_1,\dots,v_k$ of infinite degree, then $\pw(G)\geq k$. If a graph $G$ has infinitely many vertices of infinite degree, then $\pw(G)=\infty$.
\end{cor}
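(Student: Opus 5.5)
We apply \cref{lem:inifinite_conn} with $S_i=\{v_i\}$ for each $i\in\{1,\dots,k\}$. The vertices $v_1,\dots,v_k$ are distinct, so these singletons are pairwise disjoint. Each $G[\{v_i\}]$ is a single vertex and hence connected. Since $v_i$ has infinite degree, $N_G[\{v_i\}]\supseteq N_G(v_i)$ is infinite. Thus all hypotheses of \cref{lem:inifinite_conn} hold with $w=k$, and the lemma gives $\pw(G)\geq k$. (For $k=0$ the bound $\pw(G)\geq 0$ is trivial, so we may assume $k\geq 1$ as the lemma requires.)

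For the second statement, suppose $G$ has infinitely many vertices of infinite degree. Then for every integer $k\geq 1$ we can choose $k$ distinct such vertices, and the first part gives $\pw(G)\geq k$. Since this holds for every $k$, we conclude $\pw(G)=\infty$.

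No step here is difficult; the argument is a direct specialisation of \cref{lem:inifinite_conn}. The only point that needs checking is that the closed neighbourhood of a singleton $\{v\}$ is infinite exactly when $v$ has infinite degree, and this is immediate from $N_G[\{v\}]=\{v\}\cup N_G(v)$.
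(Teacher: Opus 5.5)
Your proof is correct and is exactly the paper's argument: it applies \cref{lem:inifinite_conn} with $S_i=\{v_i\}$, and obtains the second statement by letting $k$ grow. Your explicit treatment of the $k=0$ case, where the lemma's hypothesis $w\geq 1$ does not apply, is a small detail the paper leaves implicit.
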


Note that $\pw(G)\geq k$ is tight in \cref{InfiniteDegreeVertices}, since the complete bipartite graph $K_{k,\aleph_0}$ has path-width $k$.

Recall that \cref{nouniversalpathwidth} says that every universal graph for the class of locally finite graphs with path-width 1 has path-width $\infty$. This immediately follows from \cref{InfiniteDegreeVertices} and the next result.

\begin{lem} \label{lem:infdegree}
Every graph $U$ that contains every locally finite graph with path-width $1$ has infinitely many vertices of infinite degree. 
\end{lem}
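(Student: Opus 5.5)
We argue by contradiction. Suppose $U$ contains every locally finite graph with path-width $1$, but the set $X$ of vertices of $U$ with infinite degree is finite. Then $U-X$ is locally finite. The idea is to use $U$ and $X$ to build a single locally finite caterpillar $G$ whose spine degrees grow faster than any degree that $U$ can offer along a path. Every caterpillar has path-width at most $1$: take the bags $\{p_i,p_{i+1}\}$ along the spine and insert the bags $\{p_i,\ell\}$ for the leaves $\ell$ of $p_i$. So $G$ is in the class, and $U$ must contain it, which we will show is impossible.

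\emph{Construction.} Enumerate $V(U)\setminus X$ as $w_1,w_2,\dots$. For a vertex $w\notin X$ and an integer $r\geq 0$, let $M(w,r)$ be the maximum of $\deg_U(v)$ over all $v$ in the ball of radius $r$ around $w$ in $U-X$. This ball is finite because $U-X$ is locally finite, and each $v\notin X$ has finite $\deg_U(v)\leq \deg_{U-X}(v)+|X|$. So $M(w,r)$ is finite, and it is non-decreasing in $r$. Define
\[
f(n):=1+\max_{a\leq n} M(w_a,n).
\]
Let $G$ be the caterpillar with spine a 1-way infinite path $p_1p_2\cdots$ in which each $p_n$ has enough pendant leaves that $\deg_G(p_n)\geq f(n)$. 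Then $G$ is locally finite and $\pw(G)\leq 1$.

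\emph{Contradiction.} Let $\varphi$ be an embedding of $G$ into $U$. Since $\varphi$ is injective, at most $|X|$ spine vertices map into $X$. Hence there is an $N$ such that $\varphi(p_i)\notin X$ for all $i>N$, and the image of the spine tail $p_{N+1}p_{N+2}\cdots$ is a path in $U-X$. Set $w:=\varphi(p_{N+1})=w_a$ for some index $a$. For every $n>N$, the vertex $\varphi(p_n)$ is within distance $n-N-1\leq n$ of $w_a$ in $U-X$. Choose any $n\geq\max(a,N+1)$. Then
\[
\deg_U(\varphi(p_n))\leq M(w_a,n)<f(n)\leq \deg_G(p_n).
\]
This contradicts the fact that an embedding cannot decrease degrees, so $X$ must be infinite.

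The main obstacle is the diagonalisation. A single graph $G$ has to defeat every possible placement of the embedding, even though we do not know where the spine lands, and early spine vertices may pass through the high-degree set $X$. This is handled in two ways. First, we pass to a tail of the spine lying entirely in $U-X$, where balls are finite. Second, we let $f(n)$ dominate all balls of radius $n$ around the first $n$ vertices of the enumeration, so that whichever vertex $w_a$ starts the tail, it is eventually covered.
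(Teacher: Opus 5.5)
Your proof is correct and is essentially the paper's argument: assuming the set $X$ of infinite-degree vertices is finite, you build a locally finite caterpillar whose spine degrees diagonalise against the finite degree-balls of $U-X$, and get a contradiction from a tail of the spine lying in $U-X$. Your $f(n)=1+\max_{a\le n}M(w_a,n)$ plays the role of the paper's $\max_j n_{j,j}$, and your two-index maximum lets you take any $n\ge\max(a,N+1)$ instead of relying on monotonicity of $f$.

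One small correction: not every infinite caterpillar has path-width at most $1$ (the one in \cref{prop:caterpillar} has path-width $\infty$). Your bag sequence is nevertheless a genuine $\mathbb{N}$-indexed path-decomposition, because your $G$ is locally finite with a 1-way infinite spine, and that is all you use.
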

\begin{proof}
Let $I$ be the set of vertices of infinite degree in $U$. Suppose for the sake of contradiction that $I$ is finite; let $k := |I|$. Say $V(U) = \set{v_1,v_2,\dots}$, where $I = \set{v_1,\dots,v_k}$, so $\deg_U(v_i)$ is finite for each $i \geq k + 1$. For each $i \geqslant k + 1$ and $d \geqslant 0$, let $n_{i,d}$ be the maximum degree in $U$ of a vertex at distance at most $d$ from $v_i$ in $U - I$. Note that $n_{i,d}$ is finite since there are finitely many vertices at distance at most $d$ from $v_i$ in $U - I$, all of which have finite degree.

For each integer $i \geqslant 1$, let $f(i) := \max\set{n_{j,j} \colon k + 1\leq j\leq k + i}$. Note that $(f(1), f(2), \dots)$ is a non-decreasing sequence. Consider a caterpillar $G$ with a 1-way infinite spine $(t_1, t_2, \dots)$, where each vertex $t_i$ is adjacent to $f(i)$ leaves. So $G$ is locally finite. Every locally finite caterpillar with a 1-way infinite spine has path-width $1$, and thus $G$ has path-width~1 (by a similar argument given in the proof of~\cref{prop:caterpillar}). By assumption, there is an isomorphism $\phi$ from $G$ to a subgraph of $U$. Since $I$ is finite, there is a positive integer $\ell$ such that $\phi(t_i) \notin I$ for all $i \geq \ell$. 

Let $j^\ast$ be the integer such that $v_{j^\ast} = \phi(t_{\ell})$.
So $j^\ast \geq k + 1$. 
The path $\phi(t_{\ell}) \phi(t_{\ell + 1}) \dots \phi(t_{\ell + j^\ast})$ lies in $U - I$. So $\phi(t_{\ell + j^\ast})$ is at distance at most $j^\ast$ from $\phi(t_{\ell}) = v_{j^\ast}$ in $U - I$. Therefore, $\deg_U(\phi(t_{\ell + j^\ast})) \leq n_{j^\ast, j^\ast}$. Since $t_{\ell + j^\ast}$ is adjacent to $f(\ell + j^\ast)$ leaves, we have $\deg_{G}(t_{\ell + j^\ast}) = f(\ell + j^\ast) + 2$. Thus, 
\[
f(\ell + j^\ast) + 2 = \deg_G(t_{\ell + j^\ast}) \leq \deg_U(\phi(t_{\ell + j^\ast})) \leq n_{j^\ast, j^\ast} \leq f(j^\ast).
\]
This contradicts the fact that $(f(1), f(2), \dots)$ is a non-decreasing sequence. Therefore $I$ is infinite, as required.
\end{proof}

 \section{Graphs with Line-width~1} \label{section:proof1}

This section presents a series of examples of graphs with line-width 1 and other interesting properties, culminating in a construction of a graph with line-width 1 that is universal for the class of graphs with line-width at most 1. We start with the following lemma, implicitly proved by \citet[3.1]{NSS25}. We include the proof for completeness.

\begin{lem} [\citep{NSS25}] \label{lem:wowpath} 
    The $2$-way infinite path has line-width $1$ and well-order-width $2$.
\end{lem}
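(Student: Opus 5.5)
We need to show four things about the $2$-way infinite path $P$ with vertex set $\mathbb{Z}$: line-width at most $1$, line-width at least $1$, well-order-width at most $2$, and well-order-width at least $2$.

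\textbf{Line-width.} For the upper bound, take the line $(\mathbb{Z},\leq)$ with bags $B_a:=\{a,a+1\}$. Every vertex and every edge lies in a bag. Vertex $a$ lies exactly in $B_{a-1}$ and $B_a$, which are consecutive, so the third axiom holds. This gives $\lw(P)\leq 1$. The lower bound holds because $P$ has an edge, so some bag contains two vertices.

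\textbf{Well-order-width, upper bound.} The bound $\wow(P)\leq 2$ follows from \cref{lwandwow}. Alternatively, it follows directly by using $\mathbb{N}$ as the index set and mimicking the construction after \cref{DisjointPaths}: view $P$ as two $1$-way infinite paths $0,1,2,\dots$ and $-1,-2,\dots$ joined by the edge $\{-1,0\}$. The bags are
\[
\{-1,0\}, \quad \{0,1,-1\}, \quad \{1,-1,-2\}, \quad \{1,2,-2\}, \quad \{2,-2,-3\}, \quad \dots,
\]
alternately advancing along each side. Each bag has at most $3$ vertices. Each vertex appears in a consecutive block of bags, and every edge is covered.

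\textbf{Well-order-width, lower bound (the main step).} We must show $\wow(P)\geq 2$. Suppose for contradiction that $((L,\preccurlyeq),(B_t:t\in L))$ is a well-order-decomposition of $P$ of width at most $1$. Each edge $\{a,a+1\}$ must equal some bag exactly, since bags have at most two vertices. Let $t_a$ be the least element of $L$ with $B_{t_a}=\{a,a+1\}$; this exists because $\preccurlyeq$ is a well-order.

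Vertex $a+1$ lies in both $B_{t_a}$ and $B_{t_{a+1}}$. So every bag strictly between them contains $a+1$. Therefore, for distinct edges, the positions $t_a$ must be monotone in $a$.

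To see this, suppose $t_{a-1}\prec t_{a+1}\prec t_a$. Then $B_{t_{a+1}}=\{a+1,a+2\}$ would have to contain $a$, because $a\in B_{t_{a-1}}\cap B_{t_a}$. This is a contradiction. Applying the same argument to the other orderings shows that the map $a\mapsto t_a$ is either strictly increasing or strictly decreasing along $\mathbb{Z}$; this is the standard betweenness argument on consecutive triples.

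In either case, $L$ contains an infinite strictly decreasing sequence: either $t_0\succ t_{-1}\succ t_{-2}\succ\cdots$ or $t_0\succ t_1\succ t_2\succ\cdots$. This contradicts $\preccurlyeq$ being a well-order.

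The delicate step is proving monotonicity of $a\mapsto t_a$, in particular ruling out orderings where consecutive edge-bags interleave. The plan is to handle the three relevant vertex consecutive triples $t_{a-1},t_a,t_{a+1}$ by case analysis, using that the vertex $a$ lies in $B_{t_{a-1}}\cap B_{t_a}$ and the vertex $a+1$ lies in $B_{t_a}\cap B_{t_{a+1}}$. This shows $t_a$ is between $t_{a-1}$ and $t_{a+1}$. The direction then propagates by induction in both directions along $\mathbb{Z}$.
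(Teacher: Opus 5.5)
Your proof is correct. The line-width part matches the paper, but both halves of the well-order-width claim are argued differently.

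For $\wow(P)\geq 2$, the paper uses the well-order once, at the outset. It takes the least index $t'$ among \emph{all} bags that are edges, say $B_{t'}=\{a,a+1\}$, and notes that the bags $\{a-2,a-1\}$ and $\{a+2,a+3\}$ both lie above $t'$. Whichever of these comes first lies between $t'$ and the other, so it must meet the union of the bag-intervals of $a+1$ and $a+2$ (respectively, of $a-1$ and $a$). That union is a single interval because the two vertices are adjacent, so the paper needs one betweenness step and no propagation.

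You instead prove the global statement that $a\mapsto t_a$ is strictly monotone on $\mathbb{Z}$, and only then invoke well-foundedness against the resulting infinite descending chain. Your case check goes through: vertex $a$ rules out two of the four bad orderings of a consecutive triple, and vertex $a+1$ rules out the other two. The direction then propagates because each $t_a$ lies strictly between $t_{a-1}$ and $t_{a+1}$. This route is a little longer, and the minimality in your choice of $t_a$ is never used. In return it gives a reusable structural fact: consecutive edge-bags of a width-$1$ line-decomposition of a path appear in monotone order. That is exactly the consecutive-triple argument the paper later applies to the indices $\ell_i^{\mathbf{s}}$ in the proof of \cref{thm:lowerbound}.

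For $\wow(P)\leq 2$, your explicit width-$2$ path-decomposition indexed by $\mathbb{N}$ gives $\wow(P)\leq\pw(P)\leq 2$ directly. Your version is therefore self-contained, whereas the paper cites \cref{lwandwow}.
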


\begin{proof}
Let $P$ denote the $2$-way infinite path, so $V(P) = \mathbb{Z}$ and $E(P) = \{\{a, a + 1\} : a \in \mathbb{Z}\}$. For each $a \in \mathbb{Z}$, let $R_{a} := \{a, a + 1\}$. Observe that $((\mathbb{Z}, \leqslant), (R_{a} : a \in \mathbb{Z}))$ is a line-decomposition with width $1$ of $P$. Since $P$ has edges, $\lw(P) = 1$.

By \cref{lwandwow},  $\wow(P) \leqslant 2$. We now show that $\wow(P) = 2$. Suppose for the sake of contradiction that $((L, \preccurlyeq), (B_{t} : t \in L))$ is a well-order-decomposition of $P$ with width~$1$. Let $T := \{t \in L : B_{t} = \{a, a + 1\} \text{ for some } a \in \mathbb{Z}\}$. Since $|B_{t}| \leqslant 2$ for each $t \in L$ and $\{a, a + 1\} \in E(P)$ for each $a \in \mathbb{Z}$, we have that $T$ is non-empty (and, in fact, $T$ is infinite). Since $\preccurlyeq$ is a well-order of $L$, the set $T$ has a least element $t'$. So $B_{t'} = \{a, a + 1\}$ for some $a \in \mathbb{Z}$. Let $t_{1}, t_{2} \in L$ such that $B_{t_1} = \{a - 2, a - 1\}$ and $B_{t_2} = \{a + 2, a + 3\}$. Note that $t_1, t_2 \in T$. By the choice of $t'$, we have $t' \preccurlyeq t_{1}$ and $t' \preccurlyeq t_{2}$. Suppose that $t' \preccurlyeq t_{1} \preccurlyeq t_{2}$. Since $\{a + 1, a + 2\} \in E(P)$, for each $t^\ast \in L$ such that $t' \preccurlyeq t^\ast \preccurlyeq t_{2}$, we have $a + 1 \in B_{t^\ast}$ or $a + 2 \in B_{t^\ast}$. Setting $t^\ast = t_{1}$, we reach a contradiction. So it is not the case that $t' \preccurlyeq t_{1} \preccurlyeq t_{2}$. Similarly, it is not the case that $t' \preccurlyeq t_{2} \preccurlyeq t_{1}$ because $\{a - 1, a\} \in E(P)$. Therefore $\wow(P) = 2$.
\end{proof}

Up to isomorphism, the \defn{infinite star} is the graph with vertex set $\mathbb{N}$ and edge set $\{\{1, n\} : n \in \mathbb{N} \setminus \{1\}\}$. The next proposition separates line-width (and well-order-width) from path-width. 

\begin{restatable}{prop}{propinfinitestars} \label{prop:stars} 
If a graph $G$ is the disjoint union of infinitely many infinite stars, then $G$ has line-width $1$, well-order-width $1$, and path-width $\infty$.
\end{restatable}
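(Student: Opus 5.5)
The plan has three parts: path-width $\infty$, well-order-width $1$, and line-width $1$. For path-width, I apply \cref{InfiniteDegreeVertices}. The centres of the infinitely many stars are infinitely many vertices of infinite degree, so $\pw(G)=\infty$ immediately.

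For well-order-width, the lower bound is trivial: $G$ has an edge, so every line-decomposition has a bag of size at least $2$, giving $\lw(G)\geq 1$ and hence $\wow(G)\geq 1$ by \eqref{observation}. For the upper bound I use \cref{lem:unionwow}, which reduces the problem to a single infinite star. So it suffices to exhibit a well-order-decomposition of width $1$ of the infinite star with centre $1$ and leaves $2,3,\dots$. I take $L=\mathbb{N}\setminus\{1\}$ with the usual order, which is a well-order, and set $B_n=\{1,n\}$.

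I then check the decomposition axioms. Every vertex lies in some bag, and every edge $\{1,n\}$ is contained in $B_n$. Vertex $1$ lies in all bags, and each leaf $n$ lies only in $B_n$, so the bags containing any given vertex are consecutive. Every bag has size $2$, so the width is $1$. Hence $\wow(G)=1$.

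For line-width, \eqref{observation} gives $\lw(G)\leq\wow(G)=1$. Combined with $\lw(G)\geq 1$ from the edge argument, this yields $\lw(G)=1$; alternatively one can invoke \cref{lem:unionlw}.

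There is no real obstacle here. The only point needing care is that the concatenation of infinitely many copies of $\mathbb{N}$ in \cref{lem:unionwow} is still a well-order (it has order type $\omega^2$), and that lemma already handles this. The contrast with the path-width result is exactly that $\omega^2$ is not $\mathbb{N}$: each centre must stay in infinitely many consecutive bags, which is impossible for more than finitely many centres when the index set is $\mathbb{N}$.
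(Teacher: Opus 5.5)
Your proposal is correct and follows essentially the same route as the paper. Both arguments use a width-$1$ decomposition of a single star with bags $\{1,n\}$ (you index by $\mathbb{N}\setminus\{1\}$, where the paper shifts to bags $\{1,t+1\}$), then \cref{lem:unionwow} for $\wow(G)=1$, then $\lw(G)\leq\wow(G)$ plus the existence of an edge for $\lw(G)=1$, and finally \cref{InfiniteDegreeVertices} for $\pw(G)=\infty$.
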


\begin{proof} We first show that the infinite star $S$ has line-width $1$, well-order-width $1$, and path-width $1$. We assume that $V(S) = \mathbb{N}$ and $E(S) = \{\{1, n\} : n \in \mathbb{N} \setminus \{1\}\}$. For each $t \in \{1, 2, \dots\}$, let $B_{t} := \{1, t + 1\}$. Note that $((\mathbb{N}, \leqslant), (B_{t} : t \in \mathbb{N}))$ is a line-decomposition and well-order-decomposition of $S$ with width $1$, and $(B_{t} : t \in \mathbb{N})$ is a path-decomposition of $S$ with width $1$. Since $S$ has edges, $\lw(S) = \wow(S) = \pw(S) = 1$.

By \cref{lem:unionwow}, $\wow(G) = 1$. By \textcolor{blue!60!black}{(}\ref{observation}\textcolor{blue!60!black}{)}, 
$\lw(G) = 1$. By \cref{InfiniteDegreeVertices}, $\pw(G) = \infty$.
\end{proof}

\Cref{prop:stars} implies that path-width cannot be bounded from above by a function of line-width (or well-order-width). The next proposition gives a modified version of this example. A \defn{caterpillar} is a tree $G$ obtained from a path $P$ (called a \defn{spine} of $G$) by adding leaves adjacent to $P$.  It is straightforward to show that a finite connected graph has path-width $1$ if and only if it is a finite caterpillar. The next proposition considers a generalisation of this to the infinite setting.

\begin{restatable}{prop}{propcaterpillar} \label{prop:caterpillar} 
If $G$ is a caterpillar with a 2-way infinite spine where every vertex of the spine has infinite degree, then $G$ has line-width 1, well-order-width 2, and path-width~$\infty$.
\end{restatable}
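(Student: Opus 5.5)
Three claims need to be established for $G$: $\lw(G)=1$, $\wow(G)=2$ and $\pw(G)=\infty$. Label the spine vertices $s_a$ for $a\in\mathbb{Z}$, with $s_as_{a+1}\in E(G)$. For each $a$, let $\ell_{a,1},\ell_{a,2},\dots$ be the countably many leaves attached to $s_a$.

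\textbf{Path-width.} Every spine vertex has infinite degree, so $G$ has infinitely many vertices of infinite degree. By \cref{InfiniteDegreeVertices}, $\pw(G)=\infty$.

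\textbf{Line-width.} I will build a line-decomposition of width $1$ indexed by the lexicographically ordered set $L:=\mathbb{Z}\times\{0,1,2,\dots\}$. Set $B_{(a,0)}:=\{s_a,s_{a+1}\}$ and $B_{(a,n)}:=\{s_{a+1},\ell_{a+1,n}\}$ for $n\geq 1$. Every bag has size $2$ and every edge lies in some bag. For the interpolation property it suffices that the bags containing each vertex form an interval of $L$.
\begin{itemize}
\item The leaf $\ell_{a+1,n}$ appears in a single bag.
\item The spine vertex $s_{a+1}$ appears in the bags $(a,0),(a,1),(a,2),\dots$ and in $(a+1,0)$. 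These form an interval of $L$, since $(a+1,0)$ immediately follows the block $\{a\}\times\mathbb{N}$ in the lexicographic order.
\end{itemize}
So the width is $1$. Since $G$ has edges, $\lw(G)=1$. The same argument could also be phrased via \cref{FiniteLinewidth}, since every finite subgraph is a finite caterpillar forest, but the explicit construction is cleaner.

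\textbf{Well-order-width, lower bound.} The spine is a 2-way infinite path and well-order-width is monotone under subgraphs. By \cref{lem:wowpath}, $\wow(G)\geq 2$.

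\textbf{Well-order-width, upper bound.} I expect this to be the main obstacle. \cref{lwandwow} only gives $\wow(G)\leq 2\lw(G)=2$, which is exactly the bound we need, so this step is immediate: $\wow(G)=2$.

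If an explicit decomposition were wanted, I would mimic the proof of \cref{lwandwow} as follows.
\begin{itemize}
\item Index by $\mathbb{N}\times\mathbb{N}$ ordered lexicographically, which is a well-order.
\item Use the bags $\{s_0,s_{-m},s_{m}\}$-type constructions, growing outward from $s_0$ in both directions.
\item Handle the infinitely many leaves at each spine vertex within an $\omega$-block, while carrying the two frontier spine vertices along in every bag of that block.
\end{itemize}
Citing \cref{lwandwow} suffices, however.
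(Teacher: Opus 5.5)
Your proof is correct and matches the paper's argument: \cref{InfiniteDegreeVertices} gives the path-width, and an explicit width-$1$ line-decomposition over a lexicographic product gives the line-width. Your index set $\mathbb{Z}\times\{0,1,2,\dots\}$ opens each block with the spine-edge bag, whereas the paper uses $\mathbb{Z}\times(\mathbb{Z}\cup\{\infty\})$ and closes each block with it; like the paper, you then get $\wow(G)=2$ from \cref{lwandwow} and \cref{lem:wowpath}. The optional explicit well-order sketch is unnecessary and, as literally written with bags like $\{s_0,s_{-m},s_m\}$, would not cover the spine edges, so you should drop it.
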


\begin{proof}
We may and will assume that $V(G)=\mathbb{Z} \cup (\mathbb{Z} \times \mathbb{Z})$, where $xy \in E(G)$ if and only if: (i) $x, y \in \mathbb{Z}$ and $|x - y| = 1$, or (ii) $x \in \mathbb{Z}$ and $y = (x, i)$ for some $i \in \mathbb{Z}$, or (iii) $y \in \mathbb{Z}$ and $x = (y, i)$ for some $i \in \mathbb{Z}$.

Let $\preccurlyeq$ be the total order of $\mathbb{Z} \times (\mathbb{Z} \cup \{\infty\})$, where $(a, b) \preccurlyeq (a', b')$ if and only if (i) $a < a'$, or (ii) $a = a'$ and $b \leqslant b'$.  
For each $(a,b) \in \mathbb{Z} \times \mathbb{Z}$, let $B_{(a,b)}:=\{a\} \cup \{(a,b)\}$. For each $a \in \mathbb{Z}$, let $B_{(a,\infty)} := \{a, a + 1\}$.  Observe that $((\mathbb{Z} \times (\mathbb{Z} \cup \{\infty\}), \preccurlyeq), (B_t : t \in \mathbb{Z} \times (\mathbb{Z} \cup \{\infty\})))$ is a line-decomposition of $G$ of width 1. Since $G$ has edges, $\lw(G) = 1$.
    
    By \cref{lwandwow}, $\wow(G) \leq 2$. Note that the $2$-way infinite path is a spine of $G$, and hence is a subgraph of $G$. Thus, by \cref{lem:wowpath}, $\wow(G) = 2$.

    Finally, by \cref{InfiniteDegreeVertices}, $\pw(G) = \infty$.
\end{proof}

\begin{prop} \label{prop:lw1universal}
There exists a graph $U$ with line-width $1$ that is universal for the class of graphs with line-width at most $1$.
\end{prop}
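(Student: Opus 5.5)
We will take $U$ to be the disjoint union of countably many copies of the caterpillar $C$ from \cref{prop:caterpillar}, whose spine is the $2$-way infinite path and in which every spine vertex has infinitely many leaves. By \cref{prop:caterpillar}, $\lw(C)=1$, so \cref{lem:unionlw} gives $\lw(U)=1$. It then remains to show that every graph $G$ with $\lw(G)\leq 1$ is contained in $U$. Since $G$ has countably many components and $U$ has countably many copies of $C$, it suffices to show that every connected graph $H$ with $\lw(H)\leq 1$ is contained in $C$.

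First we determine the structure of such an $H$. By \cref{FiniteLinewidth}, every finite subgraph of $H$ has path-width at most $1$. A cycle has path-width $2$, so $H$ is a tree. Let $S$ be the spider consisting of a centre together with three pendant paths, each with two edges. The finite tree $S$ has path-width $2$, so $H$ contains no subgraph isomorphic to $S$; in a tree, a subdivision of $S$ contains $S$ itself, so this is the right condition to use.

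The key step is to show that a tree $H$ with no $S$ subgraph is a caterpillar. Let $H'$ be obtained from $H$ by deleting all vertices of degree $1$ (if $H$ is a single vertex or a single edge, the claim is trivial). Then $H'$ is connected, since removing leaves from a tree leaves a subtree. Suppose some vertex $v$ had three neighbours $u_1,u_2,u_3$ in $H'$. Each $u_i$ has degree at least $2$ in $H$, so it has a neighbour $w_i\neq v$. Since $H$ is acyclic, the $w_i$ are distinct and differ from $v$ and the $u_j$. Then $v$, the $u_i$ and the $w_i$ form a copy of $S$, a contradiction. Hence $H'$ is a connected graph of maximum degree at most $2$ without cycles. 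So $H'$ is a finite path, a $1$-way infinite path, or a $2$-way infinite path. Every vertex of $H$ outside $H'$ is a leaf adjacent to $H'$, so $H$ is a caterpillar with spine $H'$, where spine vertices may have countably many (possibly infinitely many) leaves.

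For the embedding, we map the spine $H'$ injectively onto consecutive integers of the spine $\mathbb{Z}$ of $C$, preserving adjacency. Each spine vertex has at most countably many leaves, and its image has infinitely many leaves in $C$, so we map its leaves injectively into the leaves of the image. This is an isomorphism onto a subgraph of $C$.

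The only real obstacle is the structural claim that the components of $G$ are caterpillars with a possibly infinite spine. The compactness result \cref{FiniteLinewidth} reduces this to the finite excluded-subgraph argument above, where the care needed is in checking that the $w_i$ are distinct and in handling vertices of infinite degree; neither issue affects the argument.
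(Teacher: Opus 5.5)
Your proposal is correct and follows essentially the same route as the paper. You use the same $U$ (countably many copies of the caterpillar with $2$-way infinite spine and infinite-degree spine vertices), exclude cycles and the $1$-subdivision of $K_{1,3}$ via finite subgraphs, and use the same leaf-deletion argument to show each component is a caterpillar; you additionally spell out the distinctness of the $w_i$ and the explicit embedding into the caterpillar, which the paper leaves implicit.
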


\begin{proof}
Let $G$ be a caterpillar with a 2-way infinite spine such that each vertex of the spine has infinite degree. Let $U$ be the disjoint union of countably many copies of $G$. By~\cref{lem:unionlw} and~\cref{prop:caterpillar}, $U$ has line-width 1. 

Let $H$ be a graph with line-width at most 1.   Thus, every finite subgraph of $H$ has path-width at most 1.  In particular, $H$ contains neither a cycle nor the 1-subdivision of $K_{1,3}$.   

We now show that $H$ is isomorphic to a subgraph of $U$. Let $T$ be a connected component of $H$.
It suffices to show that $T$ is a caterpillar.  Since $T$ contains no cycle, $T$ is a tree.  Let $T'$ be obtained from $T$ by deleting the leaves of $T$.  We claim that $T'$ has maximum degree at most 2. Towards a contradiction, suppose $v \in V(T')$ has distinct neighbours $u_1,u_2,u_3 \in V(T')$.  Since none of $u_1, u_2,u_3$ are leaves of $T$, $T$ contains a $1$-subdivision of $K_{1,3}$, which is a contradiction.  Thus either $T'$ is empty (and so $T$ is $K_1$ or $K_2$) or $T'$ is connected with maximum degree at most 2 and so $T'$ is a path. In either case, $T$ is a caterpillar.  
\end{proof}

\section{Finite Path-width: Proof of \texorpdfstring{\cref{FinitePathwidthQuantitative}}{Theorem 1}} 
\label{sec:finite_pw}

This section proves \cref{FinitePathwidthQuantitative}, which characterises graphs with finite path-width. Let $\preccurlyeq$ be a well-order of a set $L$. For a set $S \subseteq L$, the \defn{supremum} of $S$ is the least element of $\{t \in L : s \preccurlyeq t \text{ for every } s \in S\}$. Since $\preccurlyeq$ is a well-order of $L$, the supremum of $S$ exists if and only if $\{t \in L : s \preccurlyeq t \text{ for every } s \in S\} \neq \emptyset$.

\begin{lem} \label{lem:NoFiniteComp}
Let $k$ and $w$ be positive integers.
Let $G$ be a locally finite graph that does not contain $k$ pairwise disjoint $1$-way infinite paths, and every connected component of $G$ contains a $1$-way infinite path.
If $\wow(G) \leqslant w$ then $\pw(G) \leqslant w + 3(k - 1)(w + 1)$.
\end{lem}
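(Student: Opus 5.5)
The plan is to take a well-order-decomposition $((L,\preccurlyeq),(B_t:t\in L))$ of $G$ of width at most $w$ and cut $L$ into finitely many pieces. Each piece should behave like an $\omega$-indexed path-decomposition. We then interleave the pieces into a single $\mathbb{N}$-indexed decomposition, adding a bounded number of ``boundary'' bags of size at most $w+1$ to every bag. First, we may discard bags that are redundant, for instance a bag contained in an adjacent bag, or a bag at a limit point whose content is already covered. Second, note that since every component contains a $1$-way infinite path and $G$ has no $k$ pairwise disjoint ones, $G$ has at most $k-1$ components. By \cref{lem:unionwow} and concatenation it is enough to control the whole graph at once, but the ray count is the real resource.

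\textbf{Step 1 (essential limits).} Call a limit element $\lambda\in L$ \emph{essential} if infinitely many vertices of $G$ occur only in bags strictly between $\lambda$ and the previous essential limit (or the start of $L$). Let $\lambda_1\prec\lambda_2\prec\cdots$ be the essential limits, and let the \emph{segments} be the intervals between consecutive ones, together with a possible final tail. Each infinite segment $I$ gives an infinite set $V_I$ of vertices that occur only in bags of $I$. By the interpolation property, $V_I$ is separated from the rest of $G$ by the finite set $B_{\lambda}\cup B_{\lambda'}$, where $\lambda,\lambda'$ are the boundary limits of $I$. Since $G$ is locally finite and every component contains a ray, some component of $G[V_I]$ minus this finite separator is infinite. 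König's lemma then gives a $1$-way infinite path inside $V_I$. Rays from distinct segments are disjoint, because the sets $V_I$ are disjoint. Hence there are at most $k-1$ infinite segments.

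\textbf{Step 2 (each segment is $\omega$-like).} Within an infinite segment $I$ there are no further essential limits. So every non-essential limit in $I$ contributes only finitely many new vertices, and we can collapse the part of $I$ below any initial point into finitely many bags. I expect to show that, after removing redundant bags, the bags of $I$ can be re-indexed by $\mathbb{N}$, preserving the interpolation property, with width at most $w$. The main technical obstacle is here. One must argue carefully that a well-ordered sequence of finite bags with only finitely many ``infinite'' limits yields an $\omega$-type sequence. One must also handle vertices whose interval runs across a limit point: such vertices lie in the bag at that limit, so they belong to the boundary sets. Locally finite degree bounds ensure that every edge is still covered after collapsing. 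The finite parts outside the infinite segments can be placed at the beginning.

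\textbf{Step 3 (interleaving).} Let $I_1,\dots,I_s$ with $s\leq k-1$ be the infinite segments, with $\omega$-decompositions $(B^j_n:n\in\mathbb{N})$. Let $S_j$ be the union of the two boundary bags of $I_j$, so $|S_j|\leq 2(w+1)$. We form a path-decomposition by round-robin. At each step we advance one segment $j$ by one bag. The new bag is the current bag of segment $j$, together with the current bags $B^{j'}_{n_{j'}}$ of all other segments (to keep every vertex's interval consecutive while other segments advance), together with $\bigcup_j S_j$ (so that boundary vertices, which may meet every segment, are present throughout). The size is at most $(w+1)+\sum_{j}\bigl(|S_j|+(w+1)\bigr)-(w+1)+\dots$. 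Counting carefully, the active bag contributes $w+1$, and each segment contributes at most $3(w+1)$ (two boundary bags plus its frozen current bag). This gives width at most $w+3(k-1)(w+1)$. Finally, we check the three axioms. Coverage and edges are inherited from the segment decompositions and the separators. Consecutiveness holds because each vertex either stays frozen in its segment's current bag or sits permanently in $\bigcup_j S_j$.
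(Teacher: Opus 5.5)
Your argument is correct in substance, but it takes a genuinely different route from the paper.

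\textbf{The paper's route.} The paper anchors everything to a maximal family $\mathcal{C}$ of disjoint rays. For each ray $P$ it finds the point $m_P$ where the tail of $P$ accumulates, and chooses $\ell_i \prec m_i$. It then needs a chain of claims to show that $[\ell_i,m_i)$ splits into finite chunks: finiteness of the sets $A_{i,v}$, the recursively defined points $\ell_{i,j}$, and their cofinality. A separate argument then shows that only finitely many vertices lie outside $\bigcup_i\bigcup_{x\in[\ell_i,m_i]}B_x$.

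\textbf{Your route.} You define the cut points intrinsically from the decomposition, as the essential limits. Rays enter only to count segments, via K\"onig's lemma, local finiteness, and the hypothesis that every component contains a ray. The $\omega$-structure of each segment is then essentially built into the minimality of the next essential limit. This is shorter, and it bypasses the paper's analogues of Claims~4--6.

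\textbf{What Step 2 needs.} You flag Step 2 as the main obstacle, but it is short once stated precisely.
\begin{itemize}
\item Deduplicate first: keep one index per distinct bag. The restriction is still a well-order-decomposition of width at most $w$, and $L$ becomes countable. Define the essential limits only afterwards.
\item Take $x$ in a segment $I=(\lambda_{j-1},\lambda_j)$ and write $x=\mu+n$, where $\mu$ is a limit in $I$ or $\mu=\lambda_{j-1}$.
\item By the choice of $\lambda_j$ as the least essential limit after $\lambda_{j-1}$, only finitely many vertices occur only in bags of $(\lambda_{j-1},\mu)$. By interpolation, every other vertex of such a bag lies in $B_{\lambda_{j-1}}\cup B_\mu$.
\item Hence the bags indexed by $\{y\in I: y\preccurlyeq x\}$ have finite union. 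After deduplication there are only finitely many such indices, so $I$ has order type $\omega$.
\end{itemize}
Local finiteness plays no role in this step; your remark about it is unnecessary.

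\textbf{Step 3.} The round-robin is unnecessary: you can advance all segments simultaneously, as the paper does with $(Z_\alpha\cup\bigcup_i Z^i_\alpha)$. Also, consecutive segments share boundary bags, so $\bigcup_j S_j$ is the union of at most $k-1$ bags $B_{\lambda_i}$. Your construction therefore gives bags of size at most $2(k-1)(w+1)$ for $k\geq 2$. That is slightly better than the stated bound.
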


\begin{proof}
Let $((L,\preccurlyeq),(B_t: t \in L))$ be a well-order-decomposition of $G$ of width at most~$w$.
If necessary, we may add an element into $L$ that is strictly greater than every other element in $L$
and define the bag of this added element to be the empty set.
This still keeps $((L, \preccurlyeq),(B_t: t \in L))$ a well-order-decomposition of $G$ of width at most $w$, but now the supremum of $S$ exists for every $S \subseteq L$.

For every $v \in V(G)$, let $h_v$ be the least element of $\{x \in L: v \in B_x\}$, which exists since $\preccurlyeq$ is a well-order of $L$. Note that $v \in B_{h_{v}}$.

Let $\C$ be a maximal collection of pairwise disjoint 1-way infinite paths in $G$.
By assumption of this lemma, $|\C| \leq k-1$.
Since every component of $G$ contains a $1$-way infinite path, $\C \neq \emptyset$.

For each $P \in \C$, we denote $P$ by $v_{P,1}v_{P,2} \dotsb$, and we define $P(n) := P - \{v_{P,i}: 1 \leqslant i < n\}$ for every positive integer $n$.
For any elements $x, y \in L$ with $x \preccurlyeq y$, denote $[x,y] := \{t \in L: x \preccurlyeq t \preccurlyeq y\}$ and $[x,y) := \{t \in L: x \preccurlyeq t \prec y\}$.

\setcounter{claim}{0}

\begin{claim} \label{claim:1/8} For every $P \in \C$, there exists $m_P \in L$ such that \textnormal{(i)} $m_{P}$ is not the least element of $L$, and \textnormal{(ii)} for every $x \in L$ with $x \prec m_P$, there exists a positive integer $N$ such that $h_v \in [x,m_P]$ for every $v \in V(P(N))$. In particular, $V(P(N)) \subseteq \bigcup_{y \in [x,m_P]}B_y$.
\end{claim}

\begin{proof}

Fix $P \in \C$.
For every positive integer $n$, let $s_n$ be the supremum of $\{h_v: v \in V(P(n))\}$. 
Let $m_P$ be the least element of $\{s_n : n \in \mathbb{N}\}$, which exists since $\preccurlyeq$ is a well-order of $L$.
Observe that $s_1 \succcurlyeq s_2 \succcurlyeq \dotsb$.
So there exists $t \in \mathbb{N}$ such that $m_{P} = s_i$ for every $i \geq t$. 

Suppose for the sake of contradiction that $m_{P}$ is the least element of $L$. 
By the definition of $s_t$, we have $h_{v} = m_{P}$ for every $v \in V(P(t))$. Since $v \in B_{h_{v}}$, we have $V(P(t)) \subseteq B_{m_{P}}$. So $B_{m_{P}}$ has infinite size. This contradicts the assumption that the width of $((L, \preccurlyeq), (B_t: t \in L))$ is at most $w$. Therefore $m_{P}$ is not the least element of $L$, and this shows (i).

We now show (ii). Let $x \in L$ such that $x \prec m_P$.
Since $m_P$ is the least element of $\{s_n : n \in \mathbb{N}\}$
, we have that $x \prec s_n$ for every positive integer $n$.
Thus there exist infinitely many positive integers $n_1<n_2<\dots$ such that $x \prec h_{v_{P,n_i}}$ for every $i \geq 1$.

Suppose for the sake of contradiction that there does not exist a positive integer $N$ such that $h_v \in [x,m_P]$ for every $v \in V(P(N))$. 
Then there exist infinitely many positive integers $a_{1} < a_{2} < \dots$ such that for every $i \geq 1$, either $h_{v_{P,a_i}} \prec x$ or $m_P \prec h_{v_{P,a_i}}$. Hence exactly one of the following two cases holds.

If there exists a positive integer $j$ such that $m_P \prec h_{v_{P,a_i}}$ for every $i \geq j$, then $m_P \prec s_n$ for every positive integer $n \geq a_j$, contradicting that $m_P=s_i$ for every $i \geq t$.

So there are infinitely many integers $i \geq 1$ such that $h_{v_{P,a_i}} \prec x$.
Hence there exist infinitely many positive integers $i_1, j_1, i_2, j_2, \dots$ such that $a_{i_1} < n_{j_1} < a_{i_2} < n_{j_2} < \dots$ and $h_{v_{P, a_{i_k}}} \prec x$ for every integer $k \geqslant 1$.

For every integer $k \geqslant 1$, let $P_{k}$ be the subpath of $P$ with endpoints $v_{P, a_{i_k}}$ and $v_{P, n_{j_k}}$. Since $a_{i_1} < n_{j_1} < a_{i_2} < n_{j_2} < \dotsb$, we have that $P_{1}, P_{2}, \dots$ are pairwise disjoint.
For every integer $k \geqslant 1$, we have that $B_x \cap V(P_k) \neq \emptyset$ since $h_{v_{P,a_{i_k}}} \prec x \prec h_{v_{P,n_{j_k}}}$.
Hence $|B_x|$ is infinite, a contradiction.
\end{proof}

Denote the members of $\C$ by $P_1,P_2,\dots,P_{|\C|}$ such that $m_{P_1} \preccurlyeq m_{P_2} \preccurlyeq \dots \preccurlyeq m_{P_{|\C|}}$.
For every $i \in \{1, \dots, |\C|\}$, let $m_i := m_{P_i}$. Thus $m_{1} \preccurlyeq \dots \preccurlyeq m_{|\C|}$.

For every $v \in V(G)$, let $h'_v$ be the supremum of $\{t \in L: v \in B_t\}$. For every $t \in L$, let $B_t' := B_t \cup \{v \in V(G): h'_v=t\}$. Observe that $|B_t'| \leq 2(w+1)$.

\begin{claim} \label{claim:2/8} For every $i \in \{1, \dots, |\C|\}$, there exists $\ell_{i} \in L$ such that 
    \begin{itemize}
        \item[\textnormal{(i)}] $\ell_i \prec m_i$, 
        \item[\textnormal{(ii)}] $\{m_P: P \in \C\} \cap [\ell_i,m_i] \subseteq \{m_i\}$, and 
        \item[\textnormal{(iii)}] for every $P \in \C$ with $m_P = m_i$ and for every $y \in L$ with $\ell_i \preccurlyeq y \prec m_i$, $B_y \setminus B_{m_i}'$ intersects the infinite subpath of $P-B_{m_i}'$.
    \end{itemize}
\end{claim}

\begin{proof}

Let $\alpha$ be the least element of $L$, which exists since $\preccurlyeq$ is a well-order of $L$. By \cref{claim:1/8}(i), $\alpha \prec m_{1} \preccurlyeq \dots \preccurlyeq m_{|\C|}$.

Fix $i \in \{1, \dots, |\C|\}$. 
Let $A := (\set{\alpha} \cup \set{m_P : P \in \C}) \cap [\alpha, m_{i})$. 
Since $A$ is finite, it has a greatest element $\beta$; that is, $\beta \in A$ and $x \preccurlyeq \beta$ for every $x \in A$. 
By construction of $A$, we have $\beta \prec m_{i}$ and $\{m_P: P \in \C\} \cap [\beta, m_{i}] \subseteq \{\beta, m_{i}\}$.

Since $\beta \prec m_i$, \cref{claim:1/8}(ii) implies that there exists a positive integer $N$ such that $V(P_i(N)) \subseteq \bigcup_{r \in [\beta, m_i]}B_r$.
Since the width of $((L,\preccurlyeq),(B_t: t \in L))$ is at most $w$, we have $|B_{\beta} \cup B_{m_i}| \leqslant 2w+2$. Thus $[\beta, m_i] \setminus \{\beta, m_i\} \neq \emptyset$.

Let $\mathcal{P} := \{P \in \C : m_{P} = m_{i}\}$, and let $P \in \mathcal{P}$. 
Since $|B_{m_{i}}'|$ is finite, $P - B_{m_i}'$ contains a unique infinite $1$-way subpath $P'$ of $P$. By \cref{claim:1/8}(ii), there exists $x_{P} \in [\beta, m_i] \setminus \{\beta, m_i\}$ such that $B_{x_{P}} \cap V(P') \neq \emptyset$. 
Since $B_{x_P}$ is finite, \cref{claim:1/8}(ii) implies that there exists an infinite 1-way subpath $P''$ of $P'$  such that $V(P'') \subseteq (\bigcup_{r \in [x_P,m_i]}B_r) \setminus B_{m_i}'$ and $B_{x_{P}} \cap V(P'') \neq \emptyset$.
By \cref{claim:1/8}(ii), since $P''$ is connected in $G$, the definition of well-order-decomposition implies that $B_{y} \cap V(P'') \neq \emptyset$ for every $y \in L$ with $x_{P} \preccurlyeq y \prec m_P = m_i$. 

So for every $P \in \mathcal{P}$ and for every $y \in L$ with $x_P \preccurlyeq y \prec m_P = m_i$, we have $B_{y} \cap V(P') \setminus B_{m_{i}}' \neq \emptyset$. 
Let $\ell_{i}$ be the greatest element of the non-empty finite set $\{x_{P} : P \in \mathcal{P}\}$. 
So for every $y \in L$ with $\ell_i \preccurlyeq y \prec m_i$, we have $B_{y} \cap V(P') \setminus B_{m_{i}}' \neq \emptyset$. 
This shows (iii). Recall that $\{m_P: P \in \C\} \cap [\beta, m_{i}] \subseteq \{\beta, m_{i}\}$ and for every $P \in \mathcal{P}$, we have $x_{P} \in [\beta, m_i] \setminus \{\beta, m_i\}$. This shows (i) and (ii) and completes the proof of \cref{claim:2/8}.
\end{proof}

Notice that for every $i$, replacing $\ell_i$ by any $\beta$ with $\ell_i \preccurlyeq \beta \prec m_i$ keeps Claim 2 valid. 
So by repeatedly replacing $\ell_i$ by $\max\{\ell_j: 1 \leq j \leq |\C|, m_i=m_j\}$ for some $1 \leq i \leq |\C|$, we may assume that $\ell_a=\ell_b$ whenever $m_a=m_b$ for any $1 \leq a \leq b \leq |\C|$.
In particular, for any $1 \leq i \leq j \leq |\C|$, either $[\ell_i,m_i]=[\ell_j,m_j]$ or $[\ell_i,m_i] \cap [\ell_j,m_j]=\emptyset$.

\begin{claim} \label{claim:3/8} $|V(P_i) \setminus \bigcup_{y \in [\ell_i,m_i]}B_y|$ is finite for every $i \in \{1, \dots, |\C|\}$.
\end{claim}

\begin{proof}
Fix $i \in \{1, \dots, |\C|\}$. By \cref{claim:2/8}(i), $\ell_{i} \prec m_{i}$. By \cref{claim:1/8}(ii) applied to $P_i$ and $\ell_i$, there exists a positive integer $N$ such that $V(P_i(N)) \subseteq \bigcup_{y \in [\ell_i, m_i]}B_y$. Since $|V(P_{i}) \setminus V(P_i(N))|$ is finite, $|V(P_i) \setminus \bigcup_{y \in [\ell_i,m_i]}B_y|$ is finite.
\end{proof}

\begin{claim} \label{claim:3'/8} $|V(P_i) \cap \bigcup_{y \in [\ell_j,m_j]}B_y|$ is finite for any $i,j \in \{1, \dots, |\C|\}$ with $m_i \neq m_j$.
\end{claim}

\begin{proof}
Suppose to the contrary that $|V(P_i) \cap \bigcup_{y \in [\ell_j,m_j]}B_y|$ is infinite.
By \cref{claim:3/8}, $|V(P_i) \setminus \bigcup_{y \in [\ell_i,m_i]}B_y|$ is finite. 
Hence $|(\bigcup_{y \in [\ell_j,m_j]}B_y) \cap (\bigcup_{x \in [\ell_i,m_i]}B_x)|$ is infinite. 
By \cref{claim:2/8}(i) and (ii), $[\ell_i,m_i] \cap [\ell_j,m_j]=\emptyset$.
So the definition of well-order-decomposition implies that $|B_{\ell_i} \cup B_{m_i} \cup B_{\ell_j} \cup B_{m_{j}}|$ is infinite. This contradicts the assumption that the width of $((L, \preccurlyeq), (B_t: t \in L))$ is at most $w$.
\end{proof}

Let $R:= \bigcup_{P \in \C}V(P)$. For any $i \in \{1, \dots, |\C|\}$ and any $v \in R \cap \bigcup_{x \in [\ell_i,m_i]}B_x$, let $A_{i,v} := \{u \in R: \{u,v\} \subseteq B_x$ for some $x \in [\ell_i,m_i]\}$. 
Note that $A_{i, v} \subseteq R \cap \bigcup_{x \in [\ell_i,m_i]}B_x$.

\begin{claim} \label{claim:4/8} $|A_{i,v}|$ is finite for any $i \in \{1, \dots, |\C|\}$ and any $v \in (R \cap \bigcup_{x \in [\ell_i,m_i]}B_x) \setminus B_{m_i}'$.
\end{claim}

\begin{proof}
Suppose for the sake of contradiction that there exist $i \in \{1, \dots, |\C|\}$ and $v \in (R \cap \bigcup_{x \in [\ell_i,m_i]}B_x) \setminus B_{m_i}'$ such that $|A_{i,v}|$ is infinite. 
Since $A_{i, v} \subseteq R$, there exists $P_j \in \C$ such that $|V(P_j) \cap A_{i,v}|$ is infinite.  

Since $A_{i, v} \subseteq \bigcup_{x \in [\ell_i,m_i]}B_x$ and $|V(P_j) \cap A_{i,v}|$ is infinite, $|V(P_j) \cap \bigcup_{x \in [\ell_i,m_i]}B_x|$ is infinite.
Hence $m_{j} = m_{i}$ by \cref{claim:3'/8}.

Since $v \in (\bigcup_{x \in [\ell_i,m_i]}B_x) \setminus B_{m_i}'$, the definition of $B_{m_i}'$ implies that there exists $\alpha \in [\ell_i,m_i)$ with $v \not \in \bigcup_{x \in [\alpha,m_i]}B_x \setminus B_\alpha$.
Since $m_j = m_i$, \cref{claim:1/8} implies that there exists a positive integer $N$ such that $V(P_j(N)) \subseteq \bigcup_{x \in [\alpha,m_i]}B_x$. 
Since $|V(P_j) \cap A_{i,v}|$ is infinite, $|V(P_j(N)) \cap A_{i,v}|$ is infinite. 
So $|A_{i, v} \cap \bigcup_{x \in [\alpha,m_i] \setminus \{\alpha\}}B_x|$ is infinite. By the definition of $A_{i, v}$ and since $v \not \in \bigcup_{x \in [\alpha,m_i]}B_x \setminus B_\alpha$, the definition of well-order-decomposition implies that there exist infinitely many vertices of $A_{i, v}$ contained in $B_\alpha$. 
This contradicts the assumption that the width of $((L, \preccurlyeq), (B_t: t \in L))$ is at most $w$.
\end{proof}

For each $i \in \{1, \dots, |\C|\}$, let $O_i := \bigcup\{V(P): P \in \C, m_P=m_i\}$; note that $V(P_i) \subseteq O_{i}$.
For each $i \in \{1, \dots, |\C|\}$, let $\ell_{i,0} :=\ell_i$. For each $i \in \{1, \dots, |\C|\}$ and for every $j \in \mathbb{N}$, if $\ell_{i,j-1}$ is defined and $\{x \in [\ell_{i,j-1},m_i]: (O_i \cap B_x \cap \bigcup_{y \in [\ell_i,\ell_{i,j-1}]}B_{y}) \setminus B_{m_i}'=\emptyset\} \neq \emptyset$, then let $\ell_{i,j}$ be the least element of $\{x \in [\ell_{i,j-1},m_i]: (O_i \cap B_x \cap \bigcup_{y \in [\ell_i,\ell_{i,j-1}]}B_{y}) \setminus B_{m_i}'=\emptyset\}$.

\begin{claim} \label{claim:5/8} $\ell_{i,j}$ is defined and $|\bigcup_{x \in [\ell_{i,j-1},\ell_{i,j}]}B_x|$ is finite for any $i \in \{1, \dots, |\C|\}$ and any $j \in \mathbb{N}$.
\end{claim}
\begin{proof}
Suppose for the sake of contradiction that there exist $i \in \{1, \dots, |\C|\}$ and $j \in \mathbb{N}$ such that either $\ell_{i,j}$ is undefined, or $\ell_{i,j}$ is defined but $|\bigcup_{x \in [\ell_{i,j-1},\ell_{i,j}]}B_x|$ is infinite; subject to this, choose $j$ minimum.
So $\ell_{i,j-1}$ is defined.
By \cref{claim:1/8}(ii) and \cref{claim:2/8}(i), $P_i$ contains infinitely many vertices in $\bigcup_{x \in [\ell_i,m_i]}B_x$. Since $V(P_{i}) \subseteq O_{i}$, we have that $|O_{i} \cap \bigcup_{x \in [\ell_i,m_i]}B_x|$ is infinite. 
Since $B_{m_i}'$ is finite, $|(O_{i} \cap \bigcup_{x \in [\ell_i,m_i]}B_x) \setminus B_{m_i}'|$ is infinite. 
By the minimality of $j$ and since $\ell_{i, 0} = \ell_{i}$, we have that $\bigcup_{x \in [\ell_{i},\ell_{i,1}]}B_x$, $\bigcup_{x \in [\ell_{i, 1},\ell_{i, 2}]}B_x$, $\dots$, $\bigcup_{x \in [\ell_{i, j - 2},\ell_{i, j - 1}]}B_x$ are finite sets. Hence $|\bigcup_{x \in [\ell_{i},\ell_{i,j-1}]}B_x|$ is finite. 
Since $|(O_{i} \cap \bigcup_{x \in [\ell_i,m_i]}B_x) \setminus B_{m_i}'|$ is infinite and $|\bigcup_{x \in [\ell_{i},\ell_{i,j-1}]}B_x|$ is finite, $|(O_{i} \cap \bigcup_{x \in [\ell_i,m_i]}B_x) \setminus (B_{m_i}' \cup \bigcup_{x \in [\ell_{i},\ell_{i,j-1}]}B_x)|$ is infinite. 
So $\{x \in [\ell_{i,j-1},m_i]: (O_i \cap B_x \cap \bigcup_{y \in [\ell_i,\ell_{i,j-1}]}B_{y}) \setminus B_{m_i}'=\emptyset\} \neq \emptyset$.
Hence $\ell_{i,j}$ is defined.

By the definition of $\ell_{i,j}$, for every $x \in [\ell_{i,j-1},\ell_{i,j})$, we have $(O_i \cap B_x \cap \bigcup_{y \in [\ell_i,\ell_{i,j-1}]}B_{y}) \setminus B_{m_i}' \neq \emptyset$, so $(O_i \cap B_x \cap B_{\ell_{i,j-1}}) \setminus B_{m_i}' \neq \emptyset$ by the definition of well-order-decomposition. 
For every $x \in [\ell_{i,j-1},\ell_{i,j})$, if $u \in R \cap B_{x}$ and $v \in (O_i \cap B_x \cap B_{\ell_{i,j-1}}) \setminus B_{m_i}'$, then $u, v \in B_{x}$, so $u \in A_{i, v}$ by the definition of $A_{i, v}$. 
This shows that for every $x \in [\ell_{i,j-1},\ell_{i,j})$, we have $R \cap B_x \subseteq \bigcup_{v \in O_i \cap B_{\ell_{i,j-1}} \setminus B_{m_i}'}A_{i,v}$.
Hence $R \cap \bigcup_{x \in [\ell_{i,j-1},\ell_{i,j}]}B_x \subseteq B_{\ell_{i, j}} \cup \bigcup_{v \in O_i \cap B_{\ell_{i,j-1}} \setminus B_{m_i}'}A_{i,v}$. By \cref{claim:4/8}, $|A_{i,v}|$ is finite for every $v \in O_i \cap B_{\ell_{i,j-1}} \setminus B_{m_i}'$. 
Since $|O_i \cap B_{\ell_{i,j-1}} \setminus B_{m_i}'| \leq |B_{\ell_{i,j-1}}|$ is finite, $|R \cap \bigcup_{x \in [\ell_{i,j-1},\ell_{i,j}]}B_x|$ is finite.

Let $W$ be a connected component of $G-(B_{\ell_{i,j-1}} \cup B_{\ell_{i,j}} \cup R)$ intersecting $\bigcup_{x \in [\ell_{i,j-1},\ell_{i,j}]}B_x$.
By the maximality of $\C$, the graph $W$ contains no $1$-way infinite path.
Since $G$ is locally finite, $|V(W)|$ is finite.
Since every component of $G$ contains a $1$-way infinite path, $W$ is contained in a component of $G$ intersecting $R$. 
Hence $V(W)$ is adjacent in $G$ to $B_{\ell_{i,j-1}} \cup B_{\ell_{i,j}} \cup R$.
Since $G$ is locally finite and $|R \cap \bigcup_{x \in [\ell_{i,j-1},\ell_{i,j}]}B_x|$ is finite, there are only finitely many such components~$W$.
This shows that $|\bigcup_{x \in [\ell_{i,j-1},\ell_{i,j}]}B_x|$ is finite and completes the proof of \cref{claim:5/8}.
\end{proof}

\begin{claim} \label{claim:6/8}
For any $i \in \{1, \dots, |\C|\}$ and any $x \in [\ell_i,m_i)$, there exists $j \geq 1$ such that $x \preccurlyeq \ell_{i,j}$.
\end{claim}

\begin{proof}
Suppose for the sake of contradiction that there exist $i \in \{1, \dots, |\C|\}$ and $x \in [\ell_i,m_i)$ such that $\ell_{i,j} \prec x$ for every $j \geq 1$.
By \cref{claim:2/8}(iii), $B_{\ell_i} \setminus B_{m_{i}}'$ intersects the infinite subpath of $P_i - B_{m_i}'$. So there exists a positive integer $a$ such that $v_{P_i,a} \in B_{\ell_i} \setminus B_{m_i}'$ and $v_{P_i, a}$ belongs to the infinite subpath of $P_i - B_{m_i}'$. Since $|B_{\ell_i}| \leqslant w + 1$, we can choose a maximum such $a$; that is, $v_{P_i,j} \not \in B_{\ell_i}$ for every $j > a$.
By \cref{claim:1/8}(ii) and \cref{claim:2/8}(i), there exists a positive integer $N$ such that $V(P_i(N)) \subseteq \bigcup_{y \in [\ell_i,m_i]}B_y$. 
Then the maximality of $a$ implies $V(P_i(a)) \subseteq (\bigcup_{y \in [\ell_i,m_i]}B_y) \setminus B_{m_i}'$.

We now show by induction on $d$ that for every $d \geq 0$, we have $\{v_{P_i,j}: a \leq j \leq a + d\} \subseteq (\bigcup_{\alpha \in [\ell_i,\ell_{i,d}]}B_\alpha) \setminus B_{m_i}'$. 
The base case with $d = 0$ immediately follows from the choice of~$a$. 
Now assume that $d \geqslant 1$ and $\{v_{P_i,j}: a \leq j \leq a + d - 1\} \subseteq \bigcup_{\alpha \in [\ell_i,\ell_{i,d - 1}]}B_\alpha \setminus B_{m_i}'$. 
In particular, $v_{P_i, a + d - 1} \in \bigcup_{\alpha \in [\ell_i,\ell_{i,d - 1}]}B_\alpha \setminus B_{m_i}'$.
By the definition of $\ell_{i, d}$, we have $(V(P_i) \setminus B_{m_i}') \cap B_{\ell_{i, d}} \cap \bigcup_{\alpha \in [\ell_i,\ell_{i,d - 1}]}B_\alpha = \emptyset$.
In particular, $v_{P_i,a+d-1} \not \in B_{\ell_{i,d}}$.
Then, since $v_{P_i, a + d - 1}v_{P_i, a + d} \in E(G)$, the definition of well-order-decomposition implies that $v_{P_i, a + d} \in \bigcup_{\alpha \in [\ell_i,\ell_{i,d}]}B_\alpha$.
Since $v_{P_i, a + d} \in V(P_i(a))$, we know $v_{P_i, a + d} \notin B_{m_i}'$.

So we have shown that for every $d \geq 0$, we have $\{v_{P_i,j}: a \leq j \leq a + d\} \subseteq \bigcup_{\alpha \in [\ell_i,\ell_{i,d}]}B_\alpha \setminus B_{m_i}'$.
By \cref{claim:1/8}(ii), there exists an integer $b \geq a$ such that $x \preccurlyeq h_{v_{P_i,b}}$.
Setting $d \coloneqq b - a$, we obtain that $v_{P_{i}, b} \in \bigcup_{\alpha \in [\ell_i,\ell_{i,b - a}]}B_\alpha$. 
So by the definition of $h_{v_{P_i, b}}$, we have $h_{v_{P_i,b}} \preccurlyeq \ell_{i,b-a}$.
Therefore $x \preccurlyeq h_{v_{P_i,b}} \preccurlyeq \ell_{i,b-a} \prec x$, a contradiction. 
 \end{proof}

\begin{claim} \label{claim:7/8} For any $i \in \{1, \dots, |\C|\}$, there exists a path-decomposition of $G[\bigcup_{x \in [\ell_i,m_i]}B_x]$ of width at most $3w + 2$ such that every bag contains $B_{\ell_i} \cup B_{m_i}$.
\end{claim}

\begin{proof}
Fix $i \in \{1, \dots, |\C|\}$.
For every $j \geq 0$, $(B_x: \ell_{i,j} \preccurlyeq x \preccurlyeq \ell_{i,j+1})$ constitutes a well-order-decomposition of $G[\bigcup_{x \in [\ell_{i,j},\ell_{i,j+1}]}B_x]$ of width at most $w$.

By \cref{claim:5/8}, $|V(G[\bigcup_{x \in [\ell_{i,j},\ell_{i,j+1}]}B_x])|$ is finite. Thus there are only finitely many different bags in $(B_x: \ell_{i,j} \preccurlyeq x \preccurlyeq \ell_{i,j+1})$. 
So by removing duplicated bags, we obtain a path-decomposition of $G[\bigcup_{x \in [\ell_{i,j},\ell_{i,j+1}]}B_x]$ of width at most $w$. 
That is, for every $j \geq 0$, there exist a positive integer $n_j$ and a path-decomposition $(D_x: 1 \leq x \leq n_j)$ of $G[\bigcup_{x \in [\ell_{i,j},\ell_{i,j+1}]}B_x]$ of width at most $w$ such that $D_1=B_{\ell_{i,j}}$ and $D_{n_j}=B_{\ell_{i,j+1}}$.

By concatenating these path-decompositions of $G[\bigcup_{x \in [\ell_{i,0},\ell_{i,1}]}B_x]$, $G[\bigcup_{x \in [\ell_{i,1},\ell_{i,2}]}B_x], \dots$, we obtain a path-decomposition $(F_x: x \in \mathbb{N})$ of $G[\bigcup_{j=0}^\infty\bigcup_{x \in [\ell_{i,j},\ell_{i,j+1}]}B_x]$ of width at most~$w$.
By \cref{claim:6/8}, $G[\bigcup_{j=0}^\infty\bigcup_{x \in [\ell_{i,j},\ell_{i,j+1}]}B_x]$ contains $G[(\bigcup_{x \in [\ell_i,m_i]}B_x) \setminus B_{m_i}]$.
By adding $B_{\ell_i} \cup B_{m_i}$ to every bag of $(F_x: x \in \mathbb{N})$, we obtain a path-decomposition of $G[\bigcup_{x \in [\ell_i,m_i]}B_x]$ of width at most $w + |B_{\ell_i}| + |B_{m_i}| \leq 3w + 2$.
\end{proof}

\begin{claim} \label{claim:8/8} $|V(G) \setminus \bigcup_{i=1}^{|\C|}\bigcup_{x \in [\ell_i,m_i]}B_x|$ is finite.
\end{claim}

\begin{proof}
Let $W$ be a connected component of $G-\bigcup_{i=1}^{|\C|}\bigcup_{x \in [\ell_i,m_i]}B_x$. Suppose that $W$ contains a $1$-way infinite path. By the maximality of $\C$, there exists $P_i \in \C$ such that $|W \cap P_i|$ is infinite. This contradicts \cref{claim:3/8}. Thus $W$  contains no $1$-way infinite path. Since $G$ is locally finite, $W$ is finite.

Suppose that $W$ is not adjacent in $G$ to $\bigcup_{i=1}^{|\C|}(B_{\ell_i} \cup B_{m_i})$. 
Then by the definition of well-order-decomposition, $W$ is not adjacent in $G$ to $\bigcup_{i=1}^{|\C|}\bigcup_{x \in [\ell_i,m_i]}B_x$. 
So $W$ is also a connected component of $G$. 
By assumption of this lemma, $W$ contains a $1$-way infinite path, a contradiction.

So every component of $G-\bigcup_{i=1}^{|\C|}\bigcup_{x \in [\ell_i,m_i]}B_x$ is finite and is adjacent in $G$ to $\bigcup_{i=1}^{|\C|}(B_{\ell_i} \cup B_{m_i})$.
Since $|\bigcup_{i=1}^{|\C|}(B_{\ell_i} \cup B_{m_i})| \leqslant 2(w + 1)|\C|$ and $G$ is locally finite, there are only finitely many components of $G-\bigcup_{i=1}^{|\C|}\bigcup_{x \in [\ell_i,m_i]}B_x$.
Since every component of $G-\bigcup_{i=1}^{|\C|}\bigcup_{x \in [\ell_i,m_i]}B_x$ is finite, $G-\bigcup_{i=1}^{|\C|}\bigcup_{x \in [\ell_i,m_i]}B_x$ is finite.
\end{proof}

By \cref{claim:8/8}, we obtain a path-decomposition $(Z_\alpha: \alpha \in \mathbb{N})$ of $G - \bigcup_{i=1}^{|\C|}\bigcup_{x \in [\ell_i,m_i]}B_x$ of width at most $w$ by removing duplicated bags of $((L, \preccurlyeq),(B_t \setminus \bigcup_{i=1}^{|\C|}\bigcup_{x \in [\ell_i,m_i]}B_x: t \in L))$.
By \cref{claim:7/8}, for every $i \in \{1, \dots, |\C|\}$, there exists a path-decomposition $(Z^i_\alpha: \alpha \in \mathbb{N})$ of $G[\bigcup_{x \in [\ell_i,m_i]}B_x]$ of width at most $3w + 2$ such that every bag contains $B_{\ell_i} \cup B_{m_i}$.
Therefore $(Z_\alpha \cup \bigcup_{i=1}^{|\C|}Z^i_\alpha: \alpha \in {\mathbb N})$ is a path-decomposition of $G$ of width at most $w+|\C|(3w+3) \leq w+3(k-1)(w+1)$.
This completes the proof of this lemma. 
\end{proof}

\begin{thm} \label{thm:WodToPd}
For any graph $G$ and integers $k\geq 1$ and $d\geq 0$, if at most $d$ vertices in $G$ have infinite degree, and $G$ contains no $k$ pairwise disjoint $1$-way infinite paths, then 
\[\pw(G) \leqslant (3k-1)\wow(G)  + 3k+d-2.\]
\end{thm}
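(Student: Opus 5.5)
The plan is to reduce to \cref{lem:NoFiniteComp} by first deleting the high-degree vertices and then separating off the components that contain no ray. We may assume $w:=\wow(G)$ is finite. Let $I$ be the set of vertices of infinite degree, so $|I|\leq d$, and let $G':=G-I$. Every vertex of $G'$ has finite degree in $G$, and hence in $G'$, so $G'$ is locally finite. Since well-order-width is monotone under subgraphs, $\wow(G')\leq w$. Moreover, $G'$ still does not contain $k$ pairwise disjoint 1-way infinite paths. Once we have a path-decomposition of $G'$, adding $I$ to every bag gives a path-decomposition of $G$ whose width is larger by at most $d$. So it suffices to show
\[
\pw(G')\leq (3k-1)w+3k-2.
\]

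Split $G'$ as $H\sqcup F$. Here $H$ is the union of the components of $G'$ containing a 1-way infinite path, and $F$ is the union of the remaining components. Each component of $F$ is connected, locally finite and contains no 1-way infinite path, so by K\"onig's lemma it is finite. For a finite graph, path-width equals line-width, which is at most the well-order-width, so each component of $F$ has path-width at most $w$. Listing the (countably many) components of $F$ in order and concatenating their finite path-decompositions gives a path-decomposition $(Y_t:t\in\mathbb N)$ of $F$ of width at most $w$; if $F$ is finite or empty, we pad with empty bags.

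For $H$, apply \cref{lem:NoFiniteComp}. Its hypotheses hold: $H$ is locally finite, has no $k$ disjoint rays, every component of $H$ contains a ray, and $\wow(H)\leq w$. This gives a path-decomposition $(Z_t:t\in\mathbb N)$ of $H$ of width at most $w+3(k-1)(w+1)$. Two degenerate cases need a separate remark. If $H$ is empty, we use empty bags. If $w=0$, which the lemma excludes, then $G'$ has no edges and so contains no ray, so $H$ is empty anyway.

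Now take the bags $Z_t\cup Y_t$ for $t\in\mathbb N$. Because $H$ and $F$ are vertex-disjoint with no edges between them, this is a path-decomposition of $G'$: each vertex still occupies a consecutive run of bags, and every edge lies inside $H$ or inside $F$. A bag has size at most $(w+3(k-1)(w+1)+1)+(w+1)$, so the width is at most
\[
w+3(k-1)(w+1)+w+1=(3k-1)w+3k-2.
\]
Adding $d$ for the vertices of $I$ gives the stated bound. All of the real difficulty sits in \cref{lem:NoFiniteComp}. What remains is bookkeeping, and the one point to check carefully is the interleaving of the $\mathbb N$-indexed decompositions of $H$ and of the infinitely many finite components in $F$.
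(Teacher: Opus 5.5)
Your proof is correct and follows the paper's argument essentially verbatim: delete the at most $d$ infinite-degree vertices, split the locally finite remainder into finite ray-free components (whose concatenated decompositions have width at most $w$) and components containing rays (handled by \cref{lem:NoFiniteComp}), take bagwise unions, and add the deleted vertices to every bag. Your explicit handling of the degenerate cases $w=0$ and an empty ray-containing part is extra care that the paper leaves implicit.
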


\begin{proof}
If $\wow(G) = \infty$, then the inequality is immediate. Otherwise  $w \coloneqq \wow(G)$ is finite. 
Let $X$ be the set of vertices in $G$ with infinite degree.
So $|X|\leq d$ and $G - X$ is locally finite.

Let $H$ be the union of the connected components of $G - X$ that contain no $1$-way infinite path.
Since $G - X$ is locally finite, every component of $H$ is finite.
Since $G$ has a well-order-decomposition of width at most $w$, every component $C$ of $H$ has a path-decomposition $(B^C_i: i \in \{1, \dots, n_C\})$ of width at most $w$ for some positive integer $n_C$.
By concatenating these path-decompositions, we obtain a path-decomposition $(B^H_i: i \in {\mathbb N})$ of $H$ of width at most~$w$.

Note that $G - (X \cup V(H))$ is a locally finite graph that contains no $k$ pairwise disjoint 1-way infinite paths, and every connected component of $G - (X \cup V(H))$ contains a $1$-way infinite path. 
By \cref{lem:NoFiniteComp}, there exists a path-decomposition $(B_i: i \in \mathbb{N})$ of $G - (X \cup V(H))$ of width at most $w + 3(k - 1)(w + 1)$. 
Hence $(B_i \cup B^H_i: i \in \mathbb{N})$ is a path-decomposition of $G-X$ of width at most $w + (3k-2)(w + 1)$. 
Therefore $(B_i \cup B^H_i \cup X: i \in \mathbb{N})$ is a path-decomposition of $G$ with width at most $w + (3k-2)(w + 1) + |X| \leq (3k-1)w + 3k+d-2$.
\end{proof}

We are now ready to prove \cref{FinitePathwidthQuantitative}.

\begin{proof}[Proof of \cref{FinitePathwidthQuantitative}]
Let $G$ be a graph such that for some integers $k\geq1$ and $d,c\geq 0$: 
    \begin{itemize}
        \item at most~$d$ vertices in $G$ have infinite degree,
        \item $G$ contains no~$k$ pairwise disjoint 1-way infinite paths, and
        \item every finite subgraph of~$G$ has path-width at most~$c$.
    \end{itemize}
    By \cref{FiniteLinewidth}, $\lw(G) \leqslant c$. 
    By \cref{lwandwow}, $\wow(G) \leq 2c$. 
    By \cref{thm:WodToPd}, $\pw(G) \leq 2c + (3k-2)(2c + 1) + d$.
\end{proof}

\section{Universal Construction: Proof of \texorpdfstring{\cref{thm:universallw}}{Theorem 5}} \label{sec:proof2}

This section constructs a universal graph for $\mathcal{L}_k$ that is `close' to $\mathcal{L}_k$, thus proving \cref{thm:universallw}. We first present an analogous result for well-order-width.

\begin{restatable}{thm}{thmuniversalwow} \label{universalwow}
    For any integer $k \geq 0$, there is a universal graph for $\mathcal{W}_k$ with well-order-width at most $k^2 + 3k$.
\end{restatable}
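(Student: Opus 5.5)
By \cref{lem:unionwow}, it suffices to build a single graph $U$ of well-order-width at most $k^2+3k$ that contains every \emph{connected} graph in $\mathcal{W}_k$. The universal graph is then the disjoint union of countably many copies of $U$, and \cref{lem:unionwow} shows that this union still has well-order-width at most $k^2+3k$.

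The plan is to use a normal form for well-order-decompositions. The arbitrary countable well-order $L$ is to be replaced by a decomposition whose index set has a uniform shape, at a cost in width of roughly a factor $k+2$. This matches the target, since $k^2+3k+1=(k+1)(k+2)-1$. Fix a connected $G\in\mathcal{W}_k$ and a well-order-decomposition $((L,\preccurlyeq),(B_t:t\in L))$ of width at most $k$. Since $G$ is countable, we may assume $L$ is countable.

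\textbf{Colouring and normalisation.} First, make the decomposition \emph{nice}: between consecutive elements, at most one vertex enters or leaves a bag. We may also give each vertex $v$ a slot $c(v)\in\{0,\dots,k\}$ such that vertices sharing a bag receive distinct slots; this is a greedy colouring along the well-order, using that each vertex occupies an interval beginning at $h_v$. Next, for each limit point $\lambda\in L$, consider the vertices alive at $\lambda$ that were introduced strictly before $\lambda$. There are at most $k+1$ of them. We carry them explicitly as a ``memory'' inside each bag of the block $[\lambda,\lambda')$ up to the next limit point $\lambda'$. Each such block then behaves like an $\omega$-indexed path-decomposition of width $k$, together with at most $k+1$ remembered vertices per slot. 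Bounding the total bag size by $(k+1)(k+2)$ is the heart of the width estimate: a block bag of at most $k+1$ vertices, plus for each of the $k+1$ slots at most one inherited vertex, plus a constant number of extra vertices.

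\textbf{The universal graph.} Next, build $U$ by a Rado-style construction. The vertices of $U$ are generated along a countable well-ordered index set so that $U$ comes with a well-order-decomposition of the normalised shape above, with bags of size at most $(k+1)(k+2)$. The key property to impose is an \emph{extension property}: for every current bag configuration (slots, memory and adjacency pattern) and every one-step modification allowed in a nice decomposition (introduce a vertex into a free slot with a prescribed neighbourhood inside the current bag, or forget a vertex), the modification is realised at some later index. Limit stages must also be realised, with the memory vertices inherited. Because there are only countably many finite configurations, a diagonal enumeration keeps $U$ countable.

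\textbf{Embedding.} Finally, embed $G$ into $U$ by transfinite induction along $L$, mapping each vertex at its introduction time $h_v$ via the extension property and respecting slots. At limit steps, the memory mechanism guarantees that vertices still alive in $G$ have images that are still alive in $U$.

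\textbf{Main obstacle.} The main obstacle is exactly this limit step. The order types of countable well-orders are unbounded below $\omega_1$, so $U$ cannot simply reproduce every $L$. The embedding must therefore be arranged so that its image at each limit of $L$ is consistent with a genuine bag of $U$. I would first try to resolve this by proving that, for connected $G$, one can replace $L$ by a decomposition of order type at most $\omega\cdot\omega$ with width blowup bounded by the slot-memory count $k+1$ per slot. This would use the fact that a connected graph forces all but boundedly many long-lived vertices to persist across limits. If that fails, the fallback is to let $U$'s index set be a universal countable well-order-like object built into the extension property, at the cost of handling cofinal sequences more carefully.
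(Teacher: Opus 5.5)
Your reduction to connected graphs, via countably many disjoint copies and \cref{lem:unionwow}, is what the paper does. The rest of the proposal has a genuine gap: neither the universal graph nor the limit step is actually handled, and both mechanisms you propose fail.

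The Rado-style extension property cannot hold in a line-decomposition. Suppose $\alpha,\beta$ lie in a bag $D_s$ of $U$, and the modifications ``forget $\alpha$'' and ``forget $\beta$'' are realised at indices $s_1,s_2\succ s$, each with the other vertex still present (your embedding needs this, so that later neighbours of that vertex can be placed). Then whichever of $s_1,s_2$ comes first, the interval property fails for $\alpha$ or $\beta$. Every bag of a line-decomposition has a single future, so the one-point extensions of a bag cannot all be realised; Fra\"iss\'e-type amalgamation needs branching. If ``forget'' is mere bookkeeping, the same problem reappears: the lifetime of each image in $U$ is fixed when it is chosen, while how long $G$ needs that vertex is only revealed later along $L$.

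Your proposed cure for limits is also false. Let $T_3$ be the rooted tree of depth $3$ in which every non-leaf has infinitely many children. It is connected and has well-order-width at most $3$: index bags by $\NN^3$ lexicographically, with bags $\{c,a_i,b_{ij},\ell_{ijm}\}$. Yet it has no well-order-decomposition of any bounded width $W$ with order type at most $\omega\cdot\omega$. To see this, append empty bags to make the order type exactly $\omega\cdot\omega$, and split it into $\omega$-blocks.
\begin{enumerate}
\item Each vertex of infinite degree lies in all bags of a tail of some block, and for any one block at most $W+1$ vertices do so.
\item Suppose the interval of a child $a_i$ of the root met only blocks $0,\dots,N$. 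Then all but finitely many of its children $b_{ij}$ would lie in every bag of block $N+1$, which is impossible.
\item Hence every $a_i$ lies in all bags of a final segment of $L$, and any $W+2$ of them share a bag.
\end{enumerate}

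The missing idea is that one never has to reproduce $L$ or its limits. The paper greedily picks $x_0\prec x_1\prec\cdots$, each $x_i$ being the least index whose bag is disjoint from $B_{x_0}\cup\dots\cup B_{x_{i-1}}$. This sequence is finite or of length $\omega$. Connectivity of $G$ ensures that the blocks $A_i=[x_i,x_{i+1})$ cover $V(G)$. Minimality gives that every bag indexed in $A_i$ meets $B_{x_i}$. So, with $G_i$ the subgraph induced by the bags indexed by $A_i$, the graph $G_i-B_{x_i}$ has well-order-width at most $k-1$, whatever the order type of $A_i$; induction on $k$ then absorbs all limit behaviour.

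Accordingly, $U_k^-$ consists of copies $H_i$ of $U_{k-1}$ and $(k+1)$-cliques $S_i$, with $H_i$ complete to $S_i\cup S_{i+1}$ and $S_i$ complete to $S_{i+1}$. The bag $B_{x_i}$ is mapped into $S_i$ and $G_i-B_{x_i}$ into $H_i$. Each level adds $2(k+1)$ to the width, giving $\sum_{j=1}^{k}2(j+1)=k^2+3k$.

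Your width accounting also does not reach the target. Here $k+1$ bag vertices plus one memory vertex per slot is $2(k+1)+O(1)$, not $(k+1)(k+2)$. Moreover, bags of size $(k+1)(k+2)$ would mean width $k^2+3k+1$, one more than claimed.
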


The proof of \cref{universalwow} adapts a standard method~\citep{DJKW16,BEGHRW,NSW22} in the analysis of path-decompositions of finite graphs to the setting of infinite graphs. Here one takes a path-decomposition of a graph, finds a maximal set of pairwise disjoint bags, deletes these bags, and obtains a subgraph with path-width less than the path-width of the original graph. In particular, \citet{BEGHRW} used this method to prove a result analogous to \cref{universalwow} (or \cref{thm:universallw}) for finite graphs.

\begin{proof} 
We proceed by induction on $k$. 
In the base case $k = 0$, let $U_0$ consist of infinitely many isolated vertices. Note that a graph has well-order-width at most $0$ if and only if it has no edges. Thus $U_0$ has well-order-width $0$ and contains every graph with well-order-width $0$. 
 
Now assume that $k \geq 1$ and $U_{k - 1}$ is a universal graph with well-order-width at most $(k - 1)^{2} + 3(k - 1)$ for $\mathcal{W}_{k-1}$.
Let $(H_i:i\in \mathbb{N}_0)$ be a collection of pairwise disjoint copies of $U_{k-1}$. 
Let $(S_i:i\in\mathbb{N}_0)$ be a collection of pairwise disjoint sets of vertices disjoint from $\bigcup_{i \in \NN_0}V(H_i)$, with $|S_i|=k+1$ for each $i \in \mathbb{N}_0$. As illustrated in \cref{universalgraphpicture}, let $U_k^{-}$ be the graph obtained from $\bigcup_{i \in {\mathbb N}_0}H_i$ by adding $\bigcup_{i \in \NN_0}S_i$ and, for every $i \in {\mathbb N}_0$, adding edges (i) between every vertex in $H_i$ and every vertex in $S_i \cup S_{i+1}$, and (ii) between any two distinct vertices of $S_i$ so that $S_{i}$ is a clique in $U_{k}^{-}$, and (iii) between every vertex in $S_{i}$ and every vertex in $S_{i + 1}$.

    \begin{figure}[ht]
        \centering
        \scalebox{1}{\includegraphics{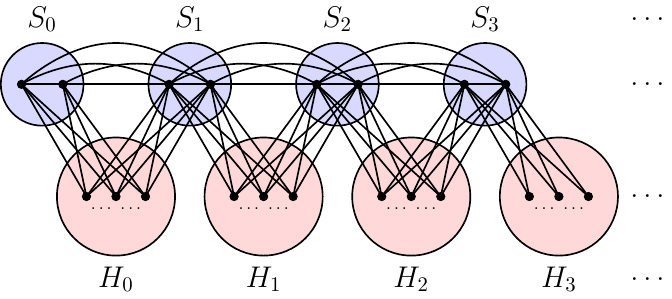}}
        \caption{The construction of $U_{k}^{-}$. In this illustration, $k = 1$, so $|S_{i}| = k + 1 = 2$ for each $i \in \mathbb{N}_{0}$.
        }
        \label{universalgraphpicture}
    \end{figure}

\begin{claim} \label{claim:one} $U_k^{-}$ has well-order-width at most $k^2 + 3k$.
    
\end{claim}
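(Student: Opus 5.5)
We construct a well-order-decomposition of $U_k^{-}$ directly, indexed by a concatenation of well-orders, and check that every bag has at most $k^2+3k+1$ vertices. By induction, each $H_i$ (a copy of $U_{k-1}$) has a well-order-decomposition $((L_i,\preccurlyeq_i),(B^i_t : t\in L_i))$ of width at most $(k-1)^2+3(k-1)=k^2+k-2$. So every bag $B^i_t$ has at most $k^2+k-1$ vertices.

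We take the index set $L:=\bigcup_{i\in\mathbb{N}_0} L_i$, with the $L_i$ pairwise disjoint and ordered exactly as in the proof of \cref{lem:unionwow}: concatenate $L_0, L_1, L_2,\dots$ in order. This is a well-order, since it is an $\omega$-indexed concatenation of well-orders. For $t\in L_i$ we define
\[
B_t := B^i_t \cup S_i \cup S_{i+1}.
\]
Then $|B_t|\le (k^2+k-1)+2(k+1)=k^2+3k+1$, so the width is at most $k^2+3k$.

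We now verify the axioms of a line-decomposition.

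Every vertex of $H_i$ lies in some bag $B^i_t$. Every vertex of $S_i$ lies in the bags indexed by $L_i$ (and by $L_{i-1}$ when $i\ge 1$), and each $L_i$ is non-empty.

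For the edges:
\begin{itemize}
\item Edges inside $H_i$ are covered by the decomposition of $H_i$.
\item An edge from $v\in V(H_i)$ to $S_i\cup S_{i+1}$ is covered by any $t\in L_i$ with $v\in B^i_t$.
\item Edges inside $S_i$ and between $S_i$ and $S_{i+1}$ are covered by any bag indexed by $L_i$.
\end{itemize}

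For the interpolation property we check that each vertex occupies a consecutive set of indices.
\begin{itemize}
\item A vertex of $H_i$ occurs only in bags indexed by $L_i$. There its bags are consecutive in $\preccurlyeq_i$, hence consecutive in $L$, because $L_i$ is an interval of $L$.
\item A vertex of $S_i$ occurs exactly in the bags indexed by $L_{i-1}\cup L_i$ (just $L_0$ when $i=0$). This is an interval of $L$ since $L_{i-1}$ immediately precedes $L_i$.
\end{itemize}
Hence $B_{t_1}\cap B_{t_3}\subseteq B_{t_2}$ whenever $t_1\preccurlyeq t_2\preccurlyeq t_3$.

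The only point needing care is this last interval check, which is where the concatenation order matters. With that in place, the remaining verification is routine bookkeeping, and the construction gives $\wow(U_k^{-})\le k^2+3k$.
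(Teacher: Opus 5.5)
Your proposal is correct and is essentially the paper's proof: you concatenate the well-order-decompositions of the $H_i$ as in \cref{lem:unionwow} and add $S_i \cup S_{i+1}$ to every bag indexed by $L_i$, which gives bags of size at most $k^2+3k+1$. You also spell out the interval check that each vertex of $S_i$ occupies exactly $L_{i-1}\cup L_i$, which the paper leaves to the reader as an observation.
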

\begin{proof}
Our proof modifies the proof of \cref{lem:unionwow}. By the inductive hypothesis, for each $i \in \mathbb{N}_{0}$, there is a well-order-decomposition $((L_{i}, \preccurlyeq_{i}), (B_{t}^{i} : t \in L_{i}))$ of $H_{i}$ with width at most $(k - 1)^2 + 3(k - 1)$. So for each $i \in \mathbb{N}_{0}$ and each $t \in L_{i}$, we have $|B_{t}^{i}| \leqslant (k - 1)^2 + 3(k - 1) + 1$.
We may and will assume that $L_{0}, L_{1}, \dots$ are pairwise disjoint. Let $L := L_{0} \cup L_{1} \cup \dots$ and $\preccurlyeq$ be the concatenation of $(\preccurlyeq_{0}$, $\preccurlyeq_{1}, \dots)$. That is, for each $t_1, t_2 \in L$, we have $t_1 \preccurlyeq t_2$ if and only if: (i) $t_1 \in L_{i}$ and $t_2 \in L_{j}$ for some $i, j \in \mathbb{N}_{0}$ with $i < j$, or (ii) $t_1 \in L_{i}$, $t_2 \in L_{i}$ and $t_1 \preccurlyeq_{i} t_2$ for some $i \in \mathbb{N}_{0}$. Since $\preccurlyeq_{i}$ is a well-order of $L_{i}$ for each $i \in \mathbb{N}_0$, we have that $\preccurlyeq$ is a well-order of $L$. For each $i \in \mathbb{N}_{0}$ and for each $t \in L_{i}$, let $B_{t} :=  B_{t}^{i} \cup S_{i} \cup S_{i + 1}$. Observe that $((L, \preccurlyeq), (B_{t} : t \in L))$ is a well-order-decomposition of $U_k^{-}$. Since $|B_{t}| \leqslant (k - 1)^2 + 3(k - 1) + 1 + 2(k + 1) = k^2 + 3k + 1$ for each $t \in L$, the width of $((L, \preccurlyeq), (B_{t} : t \in L))$ is at most $k^2 + 3k$. Therefore, $U_k^{-}$ has well-order-width at most $k^2 + 3k$.
\end{proof}

\begin{claim} \label{claim:two} $U_k^{-}$ contains every connected graph with well-order-width at most $k$.
\end{claim}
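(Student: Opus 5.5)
The plan is the standard ``delete a maximal set of disjoint bags'' argument, carried out along the well-order. Let $G$ be a connected graph with $\wow(G)\leq k$, and fix a well-order-decomposition $((L,\preccurlyeq),(B_t:t\in L))$ of $G$ of width at most $k$. Discarding indices with empty bags keeps this a well-order-decomposition, so I assume every bag is non-empty.

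\textbf{Choosing the bags.} Greedily choose indices $t_0\prec t_1\prec\cdots$:
\begin{itemize}
\item $t_0$ is the least element of $L$;
\item if $t_n$ is defined and $\{t\in L: t_n\prec t,\ B_t\cap B_{t_n}=\emptyset\}$ is non-empty, then $t_{n+1}$ is its least element;
\item otherwise the sequence stops at $t_n$.
\end{itemize}
By construction consecutive chosen bags are disjoint. In fact all chosen bags are pairwise disjoint: if $v\in B_{t_n}\cap B_{t_m}$ with $n<m$, the interval property puts $v\in B_{t_{n+1}}$, a contradiction.

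\textbf{Key claim: the chosen indices cofinally exhaust $L$.} That is, every $t\in L$ satisfies $t\preccurlyeq t_n$ for some $n$, or $t\succ t_N$ where $t_N$ is the last chosen index. This is where connectivity of $G$ is used, and I expect it to be the main obstacle, since $L$ may have order type beyond $\omega$.

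Suppose the sequence is infinite and some $t$ lies above every $t_n$. Pick $v\in B_t$ and $u\in B_{t_0}$, and let $Q$ be a finite $u$--$v$ path. The set of indices whose bags meet $V(Q)$ is an interval of $L$, because consecutive path vertices share a bag and each vertex occupies an interval. This interval contains $t_0$ and $t$, hence every $t_n$. So infinitely many pairwise disjoint bags $B_{t_n}$ meet the finite set $V(Q)$, which is impossible.

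\textbf{The pieces.} Let $X:=\bigcup_n B_{t_n}$. For each $n$, let $G_n$ be the subgraph induced by the vertices of $V(G)\setminus X$ lying in bags $B_t$ with $t_n\prec t\prec t_{n+1}$. If the sequence ends at $t_N$, let $G_N$ instead use all $t\succ t_N$.

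A vertex $v\notin X$ lies in some $B_t$, and by the key claim $t$ falls into one of these intervals. Since $v\notin B_{t_n}\cup B_{t_{n+1}}$ and $v$ occupies an interval of indices, $v$ lies only in bags of that one interval. Hence:
\begin{itemize}
\item the vertex sets of the $G_n$ are pairwise disjoint and partition $V(G)\setminus X$;
\item there are no edges between $G_n$ and $G_m$ for $n\neq m$;
\item the only neighbours of $G_n$ in $X$ lie in $B_{t_n}\cup B_{t_{n+1}}$.
\end{itemize}

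Next, every bag $B_t$ with $t_n\prec t\prec t_{n+1}$ meets $B_{t_n}$, by minimality of $t_{n+1}$. Likewise, every $B_t$ with $t\succ t_N$ meets $B_{t_N}$, by the stopping rule. Thus $|B_t\setminus X|\leq k$ for these $t$. Restricting the bags to $V(G_n)$ therefore gives a well-order-decomposition of $G_n$ of width at most $k-1$, so $G_n\in\mathcal{W}_{k-1}$.

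Finally, an edge $uv$ with $u\in B_{t_n}$ and $v\in B_{t_m}$, $m\geq n+2$, is impossible. Let $s$ be an index with $u,v\in B_s$.
\begin{itemize}
\item If $s\succcurlyeq t_{n+1}$, then $u\in B_{t_{n+1}}$ by the interval property, contradicting disjointness.
\item Otherwise $s\prec t_{n+1}\preccurlyeq t_{m-1}$, so $v\in B_{t_{m-1}}$, again contradicting disjointness.
\end{itemize}

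\textbf{The embedding.} Map each $B_{t_n}$ injectively into $S_n$, which is possible as $|B_{t_n}|\leq k+1=|S_n|$. Embed each $G_n$ into $H_n$ using the universality of $U_{k-1}$ for $\mathcal{W}_{k-1}$. We check that every edge of $G$ goes to an edge of $U_k^-$:
\begin{itemize}
\item edges inside a $B_{t_n}$ go into the clique $S_n$;
\item edges between $B_{t_n}$ and $B_{t_{n+1}}$ go to the complete join between $S_n$ and $S_{n+1}$;
\item edges from $G_n$ to $X$ go to the complete join between $H_n$ and $S_n\cup S_{n+1}$;
\item edges inside $G_n$ are handled by the embedding into $H_n$.
\end{itemize}
By the previous paragraph no other types of edges occur. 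Hence $U_k^-$ contains $G$.
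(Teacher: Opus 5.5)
Your proof is correct and follows essentially the same route as the paper. You use the same greedy choice of pairwise disjoint bags (your rule, the least $t \succ t_n$ with $B_t \cap B_{t_n} = \emptyset$, produces exactly the paper's sequence), the same width reduction of the pieces by minimality, and the same embedding into the sets $S_n$ and graphs $H_n$. The only cosmetic differences are that you prove coverage by showing the $t_n$ are cofinal via a finite path, whereas the paper inducts on the distance to $B_{x_0}$, and that you delete all of $X$ from each piece at once rather than forming $G_i - B_{x_i}$.
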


\begin{proof}
Let $G$ be a connected graph with well-order-width at most $k$. Our goal is to show that $G$ is isomorphic to a subgraph of $U_k^{-}$.

Let $((L, \preccurlyeq), (B_{t} : t \in L))$ be a well-order-decomposition of $G$ with width at most $k$.
So $|B_t| \leq k + 1$ for each $t \in L$. We may and will assume that $B_t \neq \emptyset$ for every $t \in L$.

We now define a sequence $S = (x_0, x_1, \dots)$ of distinct elements of $L$, where $S$ might be finite or infinite. First, let $x_0$ be the least element of $L$, which exists since $\preccurlyeq$ is a well-order of $L$. Now assume that $x_0, x_1, \dots, x_{i - 1}$ are defined for some $i \geqslant 1$. If $\{x\in L: B_x \cap \bigcup_{j=0}^{i-1}B_{x_j}=\emptyset\} = \emptyset$, then we complete the definition of $S$ by setting $S := (x_0, x_1, \dots, x_{i - 1})$. Otherwise, define $x_i$ to be the least element of $\{x\in L: B_x \cap \bigcup_{j=0}^{i-1}B_{x_j}=\emptyset\}$, which exists since $\preccurlyeq$ is a well-order of $L$. 

Observe that $x_{0} \prec x_{1} \prec x_{2} \prec \dotsb$. Moreover, $B_{x_0}$, $B_{x_1}$, $B_{x_{2}}, \dots$ are pairwise disjoint. 
Let $I$ be the set of integers $i$ such that $x_i$ is defined; so $I = \{0, 1, \dots, q\}$ for some $q \in \mathbb{N}_{0}$ or $I = \mathbb{N}_{0}$.
For each $i \in I$, if $i + 1 \in I$, then let $A_{i} \coloneqq \set{y \in L : x_i \preccurlyeq y \prec x_{i+1}}$; otherwise let $A_{i} \coloneqq \set{y \in L : x_i \preccurlyeq y}$. 
For each $i \in I$, we have $x_{i} \in A_{i}$. 
For each $i \in I$, let $G_i \coloneqq G[\bigcup\set{B_{y} : y \in A_i}]$. 

We first show that every vertex of $G$ is in some $G_i$. If $I$ is finite, then $(A_i \colon i \in I)$ is a partition of $L$ and so the result follows. Thus we may and will assume that $I = \NN_0$. It suffices to show that, for every vertex $v$ of $G$, there is some $i \in \NN_0$ and $t \in L$ such that $t \in A_i$ and $v \in B_t$. We will do this by induction on $\dist_G(v, B_{x_0})$ (this is finite for every vertex since $G$ is connected). If $\dist_G(v, B_{x_0}) = 0$, then $v \in B_{x_0}$ and so we may take $t = x_0$ and $i = 0$. Otherwise, there is some edge $vv' \in E(G)$ such that $\dist_G(v', B_{x_0}) < \dist_G(v, B_{x_0})$. By the induction hypothesis applied to $v'$, there is some $i \in \NN_0$ and $t' \in L$ such that $t' \in A_i$ and $v' \in B_{t'}$. Since $vv'$ is an edge, there is some $t \in L$ such that $v, v' \in B_t$. If $t \in \bigcup_{j = 0}^{i + 1} A_j$, then we are done. Otherwise $t \succcurlyeq x_{i + 2}$. Since $t' \in A_i$, we have $t' \prec x_{i + 1} \prec x_{i + 2} \preccurlyeq t$. But $v' \in B_{t'} \cap B_t$ and so $v' \in B_{x_{i + 1}} \cap B_{x_{i + 2}}$ which contradicts the disjointness of $B_{x_{i + 1}}$ and $B_{x_{i + 2}}$, as required.

We next show that $(G_{i}-(B_{x_i}\cup B_{x_{i + 1}}): i \in I)$ is a partition of $G-\bigcup_{i \in I}B_{x_i}$ into induced subgraphs. By the previous paragraph, it suffices to show that no vertex is in two different $G_i - (B_{x_i} \cup B_{x_{i + 1}})$. Suppose, towards a contradiction, that this is not the case. Then there is a vertex $v \in V(G)-\bigcup_{i \in I}B_{x_i}$, some $i, j \in I$ with $i < j$, and $t \in A_i$ and $t' \in A_j$ such that $v \in B_t \cap B_{t'}$. By the definition of $A_i$ and $A_j$, $t \prec x_{i + 1} \preccurlyeq x_j \preccurlyeq t'$. This implies that $v \in B_{x_{i + 1}}$ which contradicts $v \in V(G)-\bigcup_{i \in I}B_{x_i}$, as required.

For every $v \in V(G) \setminus \bigcup_{i \in I}B_{x_i}$, let $\mathdefn{i_v} \in I$ be the unique integer such that $v \in V(G_{i_v})$. 

We now show that, for all $i \in I$ and $y \in A_i$, $B_y \cap B_{x_i} \neq \emptyset$. If $i + 1 \in I$, then $y \prec x_{i + 1}$ and so, by the definition of $x_{i + 1}$, $B_y \cap \bigcup_{j = 0}^{i} B_{x_j} \neq \emptyset$. Otherwise, $x_i$ is the last element of $S$, so $B_x \cap \bigcup_{j = 0}^{i} B_{x_j} \neq \emptyset$ for every $x \in L$. In either case, there is some $j \leq i$ such that $B_{x_j} \cap B_y \neq \emptyset$. Since $x_j \preccurlyeq x_i \preccurlyeq y$ (the latter inequality follows from $y \in A_i$), $B_{x_i} \cap B_y \neq \emptyset$, as desired. 

For each $i \in I$, let $\preccurlyeq_i$ be the restriction of $\preccurlyeq$ to $A_{i}$. Since $A_i$ is an interval of $L$, $((A_{i}, \preccurlyeq_i),\allowbreak (B_{y} : y \in A_i))$ is a well-order-decomposition of $G_{i}$ with width at most $k$. The previous paragraph implies that every bag intersects $B_{x_i}$. Hence $((A_{i}, \preccurlyeq_i),\allowbreak (B_{y} \setminus B_{x_i} : y \in A_i))$ is a well-order-decomposition of $G_i - B_{x_i}$ with width at most $k - 1$. So $\wow(G_i - B_{x_i}) \leqslant k - 1$. 

Hence for every $i \in I$, there exists an injective homomorphism $\phi_i$ from $G_i-B_{x_i}$ to $H_i$ by the inductive hypothesis.
Since $|B_{x_i}| \leq k+1=|S_i|$ for every $i \in I$ and $B_{x_0}$, $B_{x_1}$, $B_{x_{2}}, \dots$ are pairwise disjoint, there exists an injection $\phi$ from $\bigcup_{i \in I}B_{x_i}$ to $\bigcup_{i \in I}S_i$ such that $\phi$ maps vertices in $B_{x_i}$ to $S_i$ for every $i \in I$.
We extend $\phi$ to an injection from $V(G)$ to $V(U_k^{-})$ by further defining $\phi(v) :=\phi_{i_v}(v)$ for every $v \in V(G) \setminus \bigcup_{i \in I}B_{x_i}$.

We now show that $\phi$ is an injective homomorphism.
Let $uv \in E(G)$ be an arbitrary edge. 
It suffices to show $\phi(u)\phi(v) \in E(U_k^{-})$. 
Since $uv \in E(G)$, there exists $t \in L$ such that $u, v \in B_{t}$.

First, suppose that $\{u,v\} \cap \bigcup_{i \in I}B_{x_i} = \emptyset$. 
Since $u, v \in B_{t}$ and $(G_{i}-(B_{x_i}\cup B_{x_{i + 1}}): i \in I)$ is a partition of $G-\bigcup_{i \in I}B_{x_i}$ into induced subgraphs, we have $i_{u} = i_{v}$ and $uv \in E(G_{i_u})=E(G_{i_v})$. Therefore $\phi(u)=\phi_{i_u}(u)$ and $\phi(v)=\phi_{i_u}(v)$ are the endpoints of an edge of $H_{i_u} \subseteq U_k^{-}$ since $\phi_{i_u}$ is an injective homomorphism.

Now assume that at least one of $u$ and $v$ belongs to $\bigcup_{i \in I}B_{x_i}$. 
Without loss of generality, we may assume $u \in \bigcup_{i \in I}B_{x_i}$. 
Recall that $B_{x_0}$, $B_{x_1}$, $B_{x_{2}}$, \ldots are pairwise disjoint. So there exists a unique $j \in I$ such that $u \in B_{x_j}$.
Hence $\phi(u) \in S_j$. Since $S_j$ is a clique in $U_k^{-}$, we may assume $\phi(v) \not \in S_j$, for otherwise we are done.
In particular, $v \not \in B_{x_j}$.

\textbf{Case 1.}  $j \geqslant 1$ and $t \preccurlyeq x_{j - 1}$:

Since $u \in B_{t} \cap B_{x_{j}}$ and $t \preccurlyeq x_{j - 1} \preccurlyeq x_{j}$, we have $u \in B_{x_{j}} \cap B_{x_{j - 1}}$, implying $B_{x_{j}} \cap B_{x_{j - 1}} \neq \emptyset$. This contradicts the choice of $x_{j}$, so Case $1$ does not occur.

\textbf{Case 2.} $j + 1 \in I$ and $x_{j + 1} \preccurlyeq t$:

Since $u \in B_{t} \cap B_{x_{j}}$ and $x_{j} \preccurlyeq x_{j + 1} \preccurlyeq t$, we have $u \in B_{x_{j}} \cap B_{x_{j + 1}}$, implying $B_{x_{j}} \cap B_{x_{j + 1}} \neq \emptyset$. This contradicts the choice of $x_{j + 1}$, so Case $2$ does not occur.

\textbf{Case 3.} ($j = 0$ or $x_{j - 1} \prec t$) and ($j + 1 \notin I$ or $t \prec x_{j + 1}$):

For the sake of convenience, define $G_{- 1}$ and $H_{ - 1}$ to be the graphs with no vertices and $B_{x_{-1}} := \emptyset$. 
By the construction of $(G_{i} : i \in I)$, the assumption of Case $3$ implies that  $v \in V(G_{j - 1} \cup G_j) \setminus B_{x_j}$.  

Suppose that $v \in B_{x_{j - 1}} \cup B_{x_{j + 1}}$. Then $\phi(v) \in S_{j - 1} \cup S_{j + 1}$. Since every vertex in $S_{j}$ is adjacent to every vertex in $S_{j - 1} \cup S_{j + 1}$, we have $\phi(u)\phi(v) \in E(U_k^{-})$. 

Now assume that $v \notin B_{x_{j - 1}} \cup B_{x_{j + 1}}$. So $v \in V(G_{j - 1} \cup G_j) \setminus (B_{x_{j - 1}} \cup B_{x_j} \cup B_{x_{j + 1}})$. Hence, $\phi(v) \in V(H_{j-1}) \cup V(H_j)$.
Since every vertex in $S_j$ is adjacent to every vertex in $V(H_{j-1}) \cup V(H_j)$, we have $\phi(u)\phi(v) \in E(U_k^{-})$. Therefore, $\phi$ is an injective homomorphism from $G$ to~$U_k^{-}$, and $G$ is isomorphic to a subgraph of $U_{k}^{-}$, as desired.
\end{proof} 

Let $U_{k}$ be the disjoint union of infinitely many disjoint copies of $U_{k}^{-}$. By \cref{claim:one} and \cref{lem:unionwow}, $U_{k}$ has well-order-width at most $k^2 + 3k$. By \cref{claim:two}, $U_{k}$ contains every graph in $\mathcal{W}_k$. This completes the proof.
\end{proof}

We finish this section by proving \cref{thm:universallw} from \cref{sec:intro}.

\universallw*

\begin{proof}
By \cref{lwandwow}, the well-order-width of every graph in $\mathcal{L}_{k}$ is at most $2k$. So $\mathcal{L}_{k} \subseteq \mathcal{W}_{2k}$. By \cref{universalwow}, there is a universal graph $U$ for $\mathcal{L}_{k}$ with well-order-width at most $(2k)^2 + 3(2k) = 4k^2 + 6k$. By \textcolor{blue!60!black}{(}\ref{observation}\textcolor{blue!60!black}{)}, $\lw(U) \leqslant \wow(U) \leqslant 4k^2 + 6k$. 
\end{proof}

\section{Universal Lower Bound: Proof of \texorpdfstring{\cref{thm:lowerbound}}{Theorem 6}} \label{sec:proof3}

This section proves~\cref{thm:lowerbound}, which says that every universal graph for $\mathcal{P}_k$ has line-width at least $k+1$. We start with the following lemma.  

\begin{lem} \label{lem:countableline} Let $G$ be a graph with line-width at most $k$. Then there exists a line-decomposition $((L, \preccurlyeq), (B_{t} : t \in L))$ of $G$ with width at most $k$ such that $L$ is countable and $B_{a} \neq B_{b}$ for all distinct $a, b \in L$. 
\end{lem}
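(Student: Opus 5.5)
Start from an arbitrary line-decomposition $((L, \preccurlyeq), (B_{t} : t \in L))$ of $G$ of width at most $k$, which exists by assumption. First remove duplicated bags. Define an equivalence relation on $L$ by $a \sim b$ if and only if $B_a = B_b$. Each class is \emph{convex} in $(L,\preccurlyeq)$: if $a \preccurlyeq c \preccurlyeq b$ and $B_a = B_b$, then $B_a = B_a \cap B_b \subseteq B_c$. Thus $B_a \subseteq B_c$, but this alone does not force $B_c = B_a$. So rather than quotienting by equality, pick one representative from each class. Let $L' \subseteq L$ contain exactly one element of each $\sim$-class, and keep the induced order. I then check that $((L', \preccurlyeq|_{L'}), (B_t : t \in L'))$ is still a line-decomposition:
\begin{itemize}
\item the union of the bags and the edge-coverage are unchanged, since the set of bags is unchanged;
\item the interpolation property $B_{t_1} \cap B_{t_3} \subseteq B_{t_2}$ holds for $t_1 \preccurlyeq t_2 \preccurlyeq t_3$ in $L'$, because it already holds in $L$.
\end{itemize}
So restricting to any subset of indices that still realises every bag is harmless. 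Distinct elements of $L'$ now carry distinct bags, and the width has not increased.

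Next, countability. Every bag is a subset of $V(G)$ of size at most $k+1$, and $V(G)$ is countable. The family of finite subsets of a countable set is countable. Since $t \mapsto B_t$ is injective on $L'$, it follows that $L'$ is countable.

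One subtlety: if bags may be empty, the empty set appears at most once after deduplication, so this causes no trouble. I expect no real obstacle; the only point to verify carefully is that restricting the index set preserves all three axioms, and that follows because each axiom refers only to the set of bags or to triples of indices, which are inherited from $L$.
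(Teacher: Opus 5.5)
Your proposal is correct and follows the paper's proof essentially verbatim: you pick one index per class of equal bags, restrict the order, observe that all three axioms are inherited, and deduce countability from the injection $t \mapsto B_t$ into the countably many subsets of $V(G)$ of size at most $k+1$. The only blemish is your opening claim that each class is convex, which is false in general: the bags $\{1\}, \{1,2\}, \{1\}$ in that order form a valid line-decomposition of a single edge, yet the class of $\{1\}$ is not convex. Since you never use that claim, simply delete the sentence.
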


\begin{proof}
Let $((L', \preccurlyeq'), (B_{t} : t \in L'))$ be a line-decomposition of $G$ with width at most $k$.  
For any $s,t \in L'$, define \defn{$s \equiv t$} if and only if $B_s = B_t$. Choose one representative from each equivalence class of $\equiv$, and let the resulting set be $L$. Let $\preccurlyeq$ be the restriction of $\preccurlyeq'$ to $L$. Observe that $((L, \preccurlyeq), (B_{t} : t \in L))$ is also a line-decomposition of $G$ with width at most $k$. By construction, $B_{a} \neq B_{b}$ for all distinct $a, b \in L$. 
Since $G$ has countably many vertices, there are only countably many subsets of $V(G)$ with size at most $k+1$.
Since $|B_{t}| \leqslant k + 1$ for each $t \in L$, we have that $L$ is countable.  
\end{proof}

To prove \cref{thm:lowerbound}, we employ the following auxiliary definitions. For a positive integer $k$, an infinite sequence $\mathbf{s}:= (s_n)_{n \in \mathbb{N}}$ is \defn{$k$-feasible} if:
    \begin{itemize}
    \item $s_i = i$ for all $i \in \{1, \dots, k\}$, and
    \item
    $s_i \in \{1, \dots, i + k - 1\} \setminus \{s_1, \dots, s_{i-1}\}$ for all $i > k$.
    \end{itemize}

    Note that for $i > k$, there are $(i + k - 1) - (i - 1) = k$ possibilities for $s_{i}$. Let \defn{$B_1^{\mathbf{s}}$}~$:= \{1, \dots, k + 1\}$, and for each $i \geq 2$, define \defn{$B_i^{\mathbf{s}}$} $:=(B_{i-1}^{\mathbf{s}} \cup \{k+i\}) \setminus \{s_{i-1}\}$. 
    Note that for each $i \geqslant 1$, we have $|B_{i}^{\mathbf{s}}| = k + 1$ and $\{s_{i}\} = B_{i}^{\mathbf{s}} \setminus B_{i + 1}^{\mathbf{s}}$. See \cref{examplefeasible}. Observe that every integer in $\mathbb{N} \setminus \{1\}$ is in at least two sets of $(B_{1}^{\mathbf{s}}, B_{2}^{\mathbf{s}}, \dots)$.

    \begin{figure}[ht]
        \centering
        \scalebox{0.8}{\includegraphics{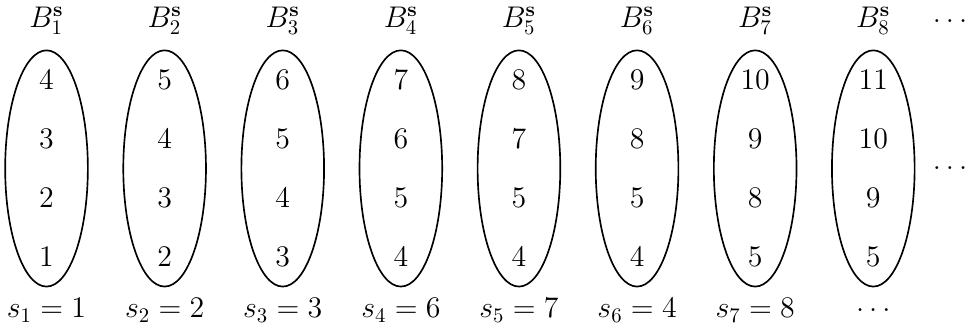}}
        \caption{An example of the beginning of a $k$-feasible sequence $\mathbf{s} = (s_n)_{n \in \mathbb{N}}$ with $k = 3$ and the beginning of the associated sequence of sets $(B_{1}^{\mathbf{s}}, B_{2}^{\mathbf{s}}, \dots)$.}
        \label{examplefeasible}
    \end{figure}

    Given a $k$-feasible sequence $\mathbf{s}:=(s_n)_{n \in \mathbb{N}}$, let \defn{$G_\mathbf{s}$} be a graph with vertex set $\mathbb{N}$ such that two vertices $u$ and $v$ of $G_\mathbf{s}$ are adjacent in $G_\mathbf{s}$ if and only if there exists $i \geqslant 1$ such that $u, v \in B_{i}^{\mathbf{s}}$. Note that $(B_i^{\mathbf{s}} : i \in \mathbb{N})$ is a path-decomposition of~$G_\mathbf{s}$ with width $k$ and $G_\mathbf{s}[B_{i}^{\mathbf{s}}]$ is a $(k + 1)$-clique for every $i \in \mathbb{N}$. Hence $G_\mathbf{s}$ has path-width $k$.

    \begin{lem} \label{lem:notisomorphic}
        For every integer $k \geqslant 1$ and every pair of distinct $k$-feasible sequences $\mathbf{s}:=(s_n)_{n \in \mathbb{N}}$ and $\mathbf{t}:=(t_n)_{n \in \mathbb{N}}$, the graphs $G_\mathbf{s}$ and $G_{\mathbf{t}}$ are not isomorphic.
    \end{lem}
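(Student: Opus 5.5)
The plan is to show that any isomorphism $\varphi\colon G_{\mathbf{s}}\to G_{\mathbf{t}}$ must be the identity map on $\mathbb{N}$, and then to read the sequence off the graph. The sequence can be read off because $\{s_i\}=B_i^{\mathbf{s}}\setminus B_{i+1}^{\mathbf{s}}$. So if the bag structure is canonically determined by the graph, then $\varphi=\mathrm{id}$ forces $\mathbf{s}=\mathbf{t}$.

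\textbf{Step 1: the $(k+1)$-cliques of $G_{\mathbf{s}}$ are exactly the bags.} Each $B_i^{\mathbf{s}}$ induces a $(k+1)$-clique. Conversely, every clique of a graph lies in some bag of any path-decomposition, by the usual Helly-type argument for intervals of $\mathbb{N}$. Since $(B_i^{\mathbf{s}})$ has width $k$, every $(k+1)$-clique equals some bag. The bags are pairwise distinct, because $k+i\in B_i^{\mathbf{s}}\setminus B_{i-1}^{\mathbf{s}}$ and the vertices that leave are never re-added.

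\textbf{Step 2: the bags form a canonical ray.} Define an auxiliary graph $\mathcal{K}_{\mathbf{s}}$ on the $(k+1)$-cliques, with two cliques adjacent when they share exactly $k$ vertices. For consecutive bags we have $|B_i^{\mathbf{s}}\cap B_{i+1}^{\mathbf{s}}|=k$. For $j\geq i+2$, the intersection $B_i^{\mathbf{s}}\cap B_j^{\mathbf{s}}$ is contained in $B_i^{\mathbf{s}}\cap B_{i+2}^{\mathbf{s}}=B_i^{\mathbf{s}}\setminus\{s_i,s_{i+1}\}$. This set has size $k-1$, since $s_i\neq s_{i+1}$ and both lie in $B_i^{\mathbf{s}}$ (note $s_{i+1}\in B_{i+1}^{\mathbf{s}}$ and $s_{i+1}\neq k+i+1$ by feasibility). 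Hence $\mathcal{K}_{\mathbf{s}}$ is the 1-way infinite path $B_1^{\mathbf{s}}B_2^{\mathbf{s}}\cdots$.

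The isomorphism $\varphi$ maps $(k+1)$-cliques to $(k+1)$-cliques and preserves intersection sizes, so it induces an isomorphism $\mathcal{K}_{\mathbf{s}}\to\mathcal{K}_{\mathbf{t}}$. A 1-way infinite path has only the trivial automorphism, since its unique degree-$1$ vertex is fixed. Therefore $\varphi(B_i^{\mathbf{s}})=B_i^{\mathbf{t}}$ for all $i$.

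\textbf{Step 3: $\varphi$ is the identity.} Fix $i\geq 2$. The vertex $k+i$ is the unique element of $B_i^{\mathbf{s}}\setminus B_{i-1}^{\mathbf{s}}$, and likewise in $G_{\mathbf{t}}$, so $\varphi(k+i)=k+i$. For $r\leq k$, the vertex $r=s_r$ is the unique element of $B_r^{\mathbf{s}}\setminus B_{r+1}^{\mathbf{s}}$, so $\varphi(r)=t_r=r$. Finally, $\varphi$ maps $B_1^{\mathbf{s}}=\{1,\dots,k+1\}$ onto itself and fixes $1,\dots,k$, so it fixes $k+1$ as well. Thus $\varphi$ is the identity.

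It follows that $\{t_i\}=\varphi(B_i^{\mathbf{s}}\setminus B_{i+1}^{\mathbf{s}})=\{s_i\}$ for every $i$, so $\mathbf{s}=\mathbf{t}$, which is a contradiction.

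The main point needing care is Step 2, in particular the bound $|B_i^{\mathbf{s}}\cap B_j^{\mathbf{s}}|\leq k-1$ for non-consecutive bags. This is the place where feasibility is used: it guarantees that $s_{i+1}\neq k+i+1$, so that $s_{i+1}\in B_i^{\mathbf{s}}$.
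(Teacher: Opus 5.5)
Your proof is correct, and it takes a different route from the paper's.

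The paper works vertex by vertex. It shows by strong induction that any isomorphism fixes every $n$:
\begin{itemize}
\item the vertices $1,\dots,k$ are successively the unique degree-$k$ vertices after deleting their predecessors;
\item for $n\geq k+1$, the vertex $n$ is the unique vertex outside $\{1,\dots,n-1\}$ with exactly $k$ neighbours in that set.
\end{itemize}
It then recovers the sequence by peeling: $s_m$ is the unique vertex of degree $k$ in $G_{\mathbf{s}}-\{s_1,\dots,s_{m-1}\}$.

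You work bag by bag instead. You identify the bags as exactly the $(k+1)$-cliques and show that the ``share exactly $k$ vertices'' graph on them is a ray. From this you deduce that any isomorphism carries $B_i^{\mathbf{s}}$ to $B_i^{\mathbf{t}}$ in order. You then read off the labels via $B_i\setminus B_{i-1}=\{k+i\}$ and the sequence via $B_i\setminus B_{i+1}=\{s_i\}$.

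The paper's argument is shorter and purely local. Its uniqueness claims tacitly rely on the same feasibility fact you isolate: each newly added vertex lies in at least two bags, so in the paper's inductive step every later vertex has at most $k-1$ neighbours among the earlier ones.

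Your argument makes that use of feasibility explicit, namely $s_{i+1}\leq k+i$, which forces non-consecutive bags to meet in at most $k-1$ vertices. It also gives the stronger statement that the bag sequence is canonically determined by $G_{\mathbf{s}}$. Finally, it is closer in spirit to the proof of \cref{thm:lowerbound}, which likewise uses $B_i^{\mathbf{s}}\cap B_{i+1}^{\mathbf{s}}\not\subseteq B_{i+2}^{\mathbf{s}}$ to force the images of the bags into a monotone sequence in which consecutive bags share exactly $k$ vertices.
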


    \begin{proof}
    Suppose for the sake of contradiction that there is an isomorphism $\phi: \mathbb{N} \to \mathbb{N}$ from $G_\mathbf{s}$ to $G_\mathbf{t}$. 
    
    We first show $\phi(n) = n$ for all $n \in \mathbb{N}$ by strong induction on $n$.  
    For the base cases, if $n \in \{1, \dots, k\}$, then $n$ is the unique vertex of degree $k$ in both $G_\mathbf{s} - \{1, \dots, n - 1\}$ and $G_\mathbf{t} - \{1, \dots, n - 1\}$. Thus $\phi(i) = i$ for each $i \in \{1, \dots, k\}$. Now fix $n \geqslant k + 1$ and assume that $\phi(i) = i$ for all $i \in \{1, \dots, n - 1\}$.  Note that $n$ is the unique vertex not in $\{1, \dots, n-1\}$, which has exactly $k$ neighbours in $\{1, \dots, n - 1\}$ (in both $G_\mathbf{s}$ and $G_\mathbf{t}$).  Hence $\phi(n) = n$, as required. 

    Since $\mathbf{s} \neq \mathbf{t}$, there exists the smallest $m \in \mathbb{N}$ such that $s_m \neq t_m$.
    By the definition of a $k$-feasible sequence, $s_i = t_i = i$ for each $i \in \{1, \dots, k\}$.
    So $m \geqslant k+1$.
    Note that $s_{m}$ is the unique vertex of degree $k$ in $G_{\mathbf{s}} - \{s_{1}, \dots, s_{m - 1}\}$, and $t_{m}$ is the unique vertex of degree $k$ in $G_{\mathbf{t}} - \{t_{1}, \dots, t_{m - 1}\}$. 
    Since $\phi$ is the identity function, $s_{m} = t_{m}$, a contradiction.     
    \end{proof}

    For every integer $k \geqslant 1$, let \defn{$\mathcal{S}_k$} be the set of isomorphism classes of $\{G_\mathbf{s} : \mathbf{s} \text{ is } k\text{-feasible}\}$.

    \begin{lem} \label{lem:uncountable} 
        For every integer $k \geqslant 2$, the set $\mathcal{S}_k$ is uncountable.
    \end{lem}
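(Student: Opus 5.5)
The plan is to combine \cref{lem:notisomorphic} with a cardinality count. Distinct $k$-feasible sequences give pairwise non-isomorphic graphs, so the map $\mathbf{s} \mapsto$ (isomorphism class of $G_{\mathbf{s}}$) is injective. Hence it suffices to show that the set of $k$-feasible sequences is uncountable whenever $k \geqslant 2$.

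To show this, I would observe that for every $i > k$ the admissible set $\{1, \dots, i + k - 1\} \setminus \{s_1, \dots, s_{i-1}\}$ has exactly $k \geqslant 2$ elements, whatever the earlier choices were. This is the count noted right after the definition. It uses the fact that $s_1, \dots, s_{i-1}$ are distinct and all lie in $\{1, \dots, i+k-2\}$, so exactly $k$ elements of $\{1,\dots,i+k-1\}$ remain. A feasible sequence is therefore determined by an infinite sequence of independent choices, each made from at least two options.

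Next I would fix, at each step $i > k$, an ordering of the admissible set, say increasing order, and encode the choice of its smallest or second-smallest element by a bit $b_i \in \{0,1\}$. This gives an injection from $\{0,1\}^{\mathbb{N}}$ into the set of $k$-feasible sequences. The encoding is well defined because, by the count above, the admissible set always has at least two elements. It is injective because, at the first index where two bit strings differ, the sequences they produce pick different elements of the same admissible set. By Cantor's theorem $\{0,1\}^{\mathbb{N}}$ is uncountable, so the set of $k$-feasible sequences is uncountable, and by injectivity so is $\mathcal{S}_k$.

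I do not expect a genuine obstacle. The only point that needs care is that the admissible set never shrinks below two elements, and that is exactly the count $(i+k-1)-(i-1)=k$ from the distinctness of the earlier terms. For $k=1$ the choice is forced at every step, which is why the hypothesis $k \geqslant 2$ is needed.
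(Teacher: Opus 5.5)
Your proposal is correct and follows essentially the same route as the paper. The paper also observes that there are exactly $k \geqslant 2$ choices for each $s_i$ with $i > k$, concludes that the set of $k$-feasible sequences is uncountable, and then applies \cref{lem:notisomorphic}; your explicit injection of $\{0,1\}^{\mathbb{N}}$ simply spells out the uncountability step that the paper leaves implicit.
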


    \begin{proof} 
        Recall that for every $k$-feasible sequence $(s_n)_{n \in \mathbb{N}}$ and all $i > k$, there are $k$ possibilities for $s_{i}$. Since $k \geqslant 2$, the set of $k$-feasible sequences is uncountable. By \cref{lem:notisomorphic}, $\mathcal{S}_k$ is uncountable.
    \end{proof}

 We are now ready to prove \cref{thm:lowerbound}.

\thmlowerbound*
\begin{proof}

    Let $U$ be a graph that contains every graph with path-width at most $k$. Suppose for the sake of contradiction that $\lw(U) \leqslant k$. By \cref{lem:countableline}, there exists a line-decomposition $((L, \preccurlyeq), (U_{t} : t \in L))$ of $U$ with width at most $k$, where $U_{a} \neq U_{b}$ for all distinct $a, b \in L$ and $L$ is countable. 
    
    Let $\mathbf{s}$ be a $k$-feasible sequence. Recall that $(B_i^{\mathbf{s}} : i \in \mathbb{N})$ is a path-decomposition of $G_\mathbf{s}$ with width $k$, and $G_{\mathbf{s}}$ has path-width $k$. Since $U$ contains $G_\mathbf{s}$, there is an injective homomorphism $\psi_{\mathbf{s}}$ from $G_\mathbf{s}$ to~$U$. Let $i \in \mathbb{N}$.  Since $B_i^{\mathbf{s}}$ is a finite clique in $G_\mathbf{s}$, $\psi_{\mathbf{s}}(B_i^{\mathbf{s}}) \subseteq U_{\ell_i^{\mathbf{s}}}$ for some $\ell_i^{\mathbf{s}} \in L$.  Since $|B_i^{\mathbf{s}}| = k + 1$, we have $\psi_{\mathbf{s}}(B_i^{\mathbf{s}})=U_{\ell_i^{\mathbf{s}}}$ and $|\psi_{\mathbf{s}}(B_i^{\mathbf{s}})| = |U_{\ell_i^{\mathbf{s}}}| = k + 1$.  Note that $\ell_i^{\mathbf{s}}$ is unique since  $U_a \neq U_b$ for all distinct $a,b \in L$. Since $B_{1}^{\mathbf{s}}, B_{2}^{\mathbf{s}}, \dots$ are distinct, we have that $\ell_1^{\mathbf{s}}, \ell_2^{\mathbf{s}}, \dots$ are distinct. 
    
    We now show that for each $i \in \mathbb{N}$, either $\ell_i^{\mathbf{s}} \preccurlyeq \ell_{i+1}^{\mathbf{s}} \preccurlyeq \ell_{i+2}^{\mathbf{s}}$ or $\ell_{i+2}^{\mathbf{s}} \preccurlyeq \ell_{i+1}^{\mathbf{s}} \preccurlyeq \ell_i^{\mathbf{s}}$. Suppose for the sake of contradiction that (i) $\ell_{i+2}^{\mathbf{s}} \preccurlyeq \ell_i^{\mathbf{s}} \preccurlyeq \ell_{i+1}^{\mathbf{s}}$ or (ii) $\ell_{i + 1}^{\mathbf{s}} \preccurlyeq \ell_i^{\mathbf{s}} \preccurlyeq \ell_{i + 2}^{\mathbf{s}}$ or (iii) $\ell_{i}^{\mathbf{s}} \preccurlyeq \ell_{i + 2}^{\mathbf{s}} \preccurlyeq \ell_{i + 1}^{\mathbf{s}}$ or (iv) $\ell_{i + 1}^{\mathbf{s}} \preccurlyeq \ell_{i + 2}^{\mathbf{s}} \preccurlyeq \ell_{i}^{\mathbf{s}}$. If (i) or (ii) hold, then $B_{i + 2}^{\mathbf{s}} \cap B_{i + 1}^{\mathbf{s}} \subseteq B_{i}^{\mathbf{s}}$, and if (iii) or (iv) hold, then $B_{i}^{\mathbf{s}} \cap B_{i+1}^{\mathbf{s}} \subseteq B_{i+2}^{\mathbf{s}}$.  However, both possibilities contradict the fact that $\mathbf{s}$ is a $k$-feasible sequence. 
    
    So for each $i \in \mathbb{N}$, either $\ell_i^{\mathbf{s}} \preccurlyeq \ell_{i+1}^{\mathbf{s}} \preccurlyeq \ell_{i+2}^{\mathbf{s}}$ or $\ell_{i+2}^{\mathbf{s}} \preccurlyeq \ell_{i+1}^{\mathbf{s}} \preccurlyeq \ell_i^{\mathbf{s}}$. 
    Hence either $\ell_1^{\mathbf{s}} \preccurlyeq \ell_2^{\mathbf{s}} \preccurlyeq \dotsb$ or $\ell_1^{\mathbf{s}} \succcurlyeq \ell_2^{\mathbf{s}} \succcurlyeq \dotsb$.  
    
    Let $X:=\{x \in V(U) : \text{there exists exactly one element $t \in L$ such that $x \in U_t$}\}.$ 
    Let $L':=\{t \in L : U_t \cap X = \emptyset\}$. Recall that every integer in $\mathbb{N} \setminus \{1\}$ is in at least two bags of the path-decomposition $(B_i^{\mathbf{s}} : i \in \mathbb{N})$ of $G_{\mathbf{s}}$. Hence $\psi_{\mathbf{s}}(V(G_\mathbf{s})) \subseteq (V(U) \setminus X) \cup U_{\ell_1^{\mathbf{s}}}$, and $\ell_i^{\mathbf{s}} \in L'$ for every $i \geq 2$.
    
    We now show that for every $t \in L$, there exists at most one element $t^<$ of $L'$ such that $t^< \prec t$, $|U_t \cap U_{t^<}|=k$, and $|U_{t^<}| = k + 1$. Suppose for the sake of contradiction that there are two such elements $t_{1}^<, t_{2}^< \in L'$ with $t_{1}^< \prec t_{2}^< \prec t$. Then $|U_t \cap U_{t_{1}^<}| = |U_t \cap U_{t_{2}^<}| = k$ and $|U_{t_{1}^<}| = |U_{t_{2}^<}| = k + 1$. Since $((L, \preccurlyeq), (U_{t} : t \in L))$ is a line-decomposition, $U_t \cap U_{t_{1}^<} \subseteq U_{t_{2}^<}$. Let $q$ be the only element of $U_{t_{2}^<} \setminus (U_t \cap U_{t_{1}^<})$, so $U_{t_{2}^<} = \{q\} \cup (U_t \cap U_{t_{1}^<})$. Then $q \notin U_t \cup U_{t_{1}^<}$, for otherwise $U_t=U_{t_{2}^<}$ or $U_{t_{1}^<}=U_{t_{2}^<}$, contradicting $U_{a} \neq U_{b}$ for all distinct $a, b \in L$. Since $t_{2}^{<} \in L'$, we have $q \notin X$. Hence there exists $t_{3} \in L \setminus \{t_{2}^<\}$ such that $q \in U_{t_{3}}$. Since $q \in U_{t_{2}^<}$ and $q \notin U_t \cup U_{t_{1}^<}$, we have $t_{1}^< \prec t_{3} \prec t$. Then $U_t \cap U_{t_{1}^<} \subseteq U_{t_{3}}$. So $U_{t_3} = \{q\} \cup (U_t \cap U_{t_{1}^<})$. Hence $U_{t_3} = U_{t_{2}^<}$. However, $U_{a} \neq U_{b}$ for all distinct $a, b \in L$, a contradiction.
    
    So for every $t \in L$, there exists at most one element $t^<$ of $L'$ such that $t^< \prec t$, $|U_t \cap U_{t^<}|=k$, and $|U_{t^<}| = k + 1$. Similarly, for every $t \in L$, there exists at most one element $t^>$ of $L'$ such that $t \prec t^>$, $|U_t \cap U_{t^>}| = k$, and $|U_{t^>}| = k + 1$.
    Thus, if $\ell_1^{\mathbf{s}} \succcurlyeq \ell_2^{\mathbf{s}}$, then $\ell_1^{\mathbf{s}} \succcurlyeq \ell_2^{\mathbf{s}} \succcurlyeq \dots $ and $\ell_{i+1}^{\mathbf{s}}=\ell_i^<$ for every $i \in \mathbb{N}$.
    Similarly, if $\ell_1^{\mathbf{s}} \preccurlyeq \ell_2^{\mathbf{s}}$, then $\ell_1^{\mathbf{s}} \preccurlyeq \ell_2^{\mathbf{s}} \preccurlyeq \dots $ and $\ell_{i+1}^{\mathbf{s}}=\ell_i^>$ for every $i \in \mathbb{N}$.
Hence, the sequence $(\ell_1^{\mathbf{s}}, \ell_2^{\mathbf{s}}, \ell_3^{\mathbf{s}}, \dots)$ is uniquely determined by $\ell_1^{\mathbf{s}}$ and $\ell_2^{\mathbf{s}}$.

We now show that $\psi_{\mathbf{s}}$ is an isomorphism from $G_{\mathbf{s}}$ to $U[U_{\ell_{1}^{\mathbf{s}}} \cup U_{\ell_{2}^{\mathbf{s}}} \cup \dots]$. Recall that for each $i \in \mathbb{N}$, the set $B_i^{\mathbf{s}}$ is a $(k + 1)$-clique in $G_\mathbf{s}$, and $\psi_{\mathbf{s}}(B_i^{\mathbf{s}})=U_{\ell_i^{\mathbf{s}}}$, and $|\psi_{\mathbf{s}}(B_i^{\mathbf{s}})| = |U_{\ell_i^{\mathbf{s}}}| = k + 1$. 
This implies that the image of $\psi_{\mathbf{s}}$ equals $U_{\ell_{1}^{\mathbf{s}}} \cup U_{\ell_{2}^{\mathbf{s}}} \cup \dotsb$.
Since $\psi_{\mathbf{s}}$ is an injective homomorphism from $G_{\mathbf{s}}$ to $U$, we know that $\psi_{\mathbf{s}}$ is a bijection from $V(G_{\mathbf{s}})$ to $U_{\ell_{1}^{\mathbf{s}}} \cup U_{\ell_{2}^{\mathbf{s}}} \cup \dots$ and is an injective homomorphism from $G_{\mathbf{s}}$ to $U[U_{\ell_{1}^{\mathbf{s}}} \cup U_{\ell_{2}^{\mathbf{s}}} \cup \dots]$.
Suppose for the sake of contradiction that $\psi_{\mathbf{s}}$ is not an isomorphism from $G_{\mathbf{s}}$ to $U[U_{\ell_{1}^{\mathbf{s}}} \cup U_{\ell_{2}^{\mathbf{s}}} \cup \dots]$.
Then $\psi_{\mathbf{s}}$ maps two non-adjacent vertices in $G_{\mathbf{s}}$ to two adjacent vertices $a$ and $b$ in $U[U_{\ell_{1}^{\mathbf{s}}} \cup U_{\ell_{2}^{\mathbf{s}}} \cup \dots]$.
Let $i, j \in \mathbb{N}$ with $a \in U_{\ell_{i}^{\mathbf{s}}}$ and $b \in U_{\ell_{j}^{\mathbf{s}}}$ such that $|j - i|$ is minimum.
Since $\psi_{\mathbf{s}}(B_i^{\mathbf{s}})=U_{\ell_i^{\mathbf{s}}}$ and $\psi_{\mathbf{s}}(B_j^{\mathbf{s}})=U_{\ell_j^{\mathbf{s}}}$, we have $\psi_{\mathbf{s}}^{-1}(a) \in B_i^{\mathbf{s}}$ and $\psi_{\mathbf{s}}^{-1}(b) \in B_j^{\mathbf{s}}$.
Since $\psi_{\mathbf{s}}^{-1}(a)\psi_{\mathbf{s}}^{-1}(b) \notin E(G_{\mathbf{s}})$ and $B_i^{\mathbf{s}}$ is a clique, we know $i \neq j$.
Since $a$ and $b$ appear in a common bag of the line-decomposition $((L, \preccurlyeq), (U_{t} : t \in L))$ and the sequence $(\ell_i^{\mathbf{s}}: i \in \mathbb{N})$ is monotone, we may and will assume that $j = i + 1$. 
By the minimality of $|j - i|$, we have $\psi_{\mathbf{s}}^{-1}(a) \in B_{i}^{\mathbf{s}} \setminus B_{i + 1}^{\mathbf{s}}$ and $\psi_{\mathbf{s}}^{-1}(b) \in B_{i + 1}^{\mathbf{s}} \setminus B_{i}^{\mathbf{s}}$. By construction of $G_{\mathbf{s}}$, we have that $\psi_{\mathbf{s}}^{-1}(b)$ is adjacent to each vertex of $B_{i + 1}^{\mathbf{s}} \setminus \{\psi_{\mathbf{s}}^{-1}(b)\} = B_{i}^{\mathbf{s}} \setminus \{\psi_{\mathbf{s}}^{-1}(a)\}$. Since $ab \in E(U)$, we have that $\psi_{\mathbf{s}}(B_{i}^{\mathbf{s}}) \cup \{b\}$ is a $(k + 2)$-clique in $U$. This contradicts the assumption that $\lw(U) \leqslant k$.

Hence $\psi_{\mathbf{s}}$ is an isomorphism from $G_{\mathbf{s}}$ to $U[U_{\ell_{1}^{\mathbf{s}}} \cup U_{\ell_{2}^{\mathbf{s}}} \cup \cdots]$. 
By \cref{lem:notisomorphic}, for any two distinct $k$-feasible sequences $\mathbf{s}$ and $\mathbf{t}$, $G_{\mathbf{s}}$ and $G_{\mathbf{t}}$ are non-isomorphic, so $(\ell_1^{\mathbf{s}}, \ell_2^{\mathbf{s}}, \ell_3^{\mathbf{s}}, \dots) \neq (\ell_1^{\mathbf{t}}, \ell_2^{\mathbf{t}}, \ell_3^{\mathbf{t}}, \dots)$. 
Recall that the sequence $(\ell_1^{\mathbf{s}}, \ell_2^{\mathbf{s}}, \ell_3^{\mathbf{s}}, \dots)$ is uniquely determined by $\ell_1^{\mathbf{s}}$ and $\ell_2^{\mathbf{s}}$. So for any two distinct $k$-feasible sequences $\mathbf{s}$ and $\mathbf{t}$, we have $(\ell_1^{\mathbf{s}}, \ell_2^{\mathbf{s}}) \neq (\ell_1^{\mathbf{t}}, \ell_2^{\mathbf{t}})$. 
Hence, the cardinality of $\mathcal{S}_k$ is at most the cardinality of the set of ordered pairs of $L$. By \cref{lem:uncountable}, $\mathcal{S}_k$ is uncountable. So $L$ is uncountable, a contradiction.
Therefore, $\lw(U) \geqslant k + 1$, as desired. 
\end{proof}

\subsection*{Acknowledgements}

This research was initiated at the workshop, \href{https://www.matrix-inst.org.au/events/global-structure-and-geometry-of-graphs/}{\it Global Structure and Geometry of Graphs}, held in April 2026 at MATRIX in Creswick, Australia.
This paper was partially written when the fifth author visited the Institute of Mathematics at Academia Sinica in Taiwan, and he thanks its hospitality.

{\fontsize{10pt}{10.5pt}\selectfont
\bibliographystyle{DavidNatbibStyle}
\bibliography{DavidBibliography}}
\end{document}